\newcommand{\bbE}{\mathbb{E}}
\newcommand{\bbC}{\mathbb{C}}
\newcommand{\bbN}{\mathbb{N}}
\newcommand{\bbP}{\mathbb{P}}
\newcommand{\bbR}{\mathbb{R}}
\newcommand{\bbS}{\mathbb{S}}
\newcommand{\bbZ}{\mathbb{Z}}
\newcommand{\bX}{\mathbf{X}}
\newcommand{\bY}{\mathbf{Y}}
\newcommand{\bZ}{\mathbf{Z}}
\newcommand{\bV}{\mathbf{V}}
\newcommand{\ba}{\mathbf{a}}
\newcommand{\bg}{\mathbf{g}}
\newcommand{\bx}{\mathbf{x}}
\newcommand{\bv}{\mathbf{v}}
\newcommand{\bw}{\mathbf{w}}
\newcommand{\bu}{\mathbf{u}}
\newcommand{\bone}{\mathbf{1}}
\newcommand{\cA}{\mathcal{A}}
\newcommand{\cH}{\mathcal{H}}
\newcommand{\cP}{\mathcal{P}}
\newcommand{\cF}{\mathcal{F}}
\newcommand{\cO}{\mathcal{O}}
\newcommand{\cS}{\mathcal{S}}
\newcommand{\cM}{\mathcal{M}}
\newcommand{\cY}{\mathcal{Y}}
\newcommand{\cG}{\mathcal{G}}
\newcommand{\Ber}{\mathsf{Ber}}
\newcommand{\Hyper}{\mathsf{Hypergeometric}}
\newcommand{\BL}{\mathrm{BL}}
\newcommand{\KS}{\mathrm{KS}}
\newcommand{\levy}{\text{L\'evy}}
\newcommand{\PI}{\mathrm{PI}}
\newcommand{\LSI}{\mathrm{LSI}}
\newcommand{\ind}{\mathds{1}}
\newcommand{\Mat}{\mathrm{Mat}}
\newcommand{\Lip}{\mathrm{Lip}}
\renewcommand{\sc}{\mathsf{sc}}
\newcommand{\spec}{\mathrm{spec}}
\newcommand{\Gauss}{\mathsf{G}}
\newcommand{\avg}{\mathrm{avg}}
\newcommand{\op}{\mathrm{op}}
\newcommand{\mubar}{\bar{\mu}}
\newcommand{\diag}{\mathrm{diag}}
\DeclareMathOperator{\rank}{rank}
\DeclareMathOperator{\Var}{Var}
\DeclareMathOperator{\Tr}{Tr}
\DeclareMathOperator{\Cov}{Cov}
\newcommand{\convas}{\xrightarrow{\text{a.s.}}}
\newcommand{\convp}{\xrightarrow{p}}
\newcommand{\convd}{\xrightarrow{d}}
\theoremstyle{plain}
\newtheorem{theorem}{Theorem}[section]
\newtheorem{corollary}{Corollary}[section]
\newtheorem{proposition}{Proposition}[section]
\newtheorem{lemma}{Lemma}[section]
\theoremstyle{definition}
\newtheorem{assumption}{Assumption}[section]
\newtheorem{definition}{Definition}[section]
\newtheorem{example}{Example}[section]
\theoremstyle{remark}
\newtheorem{remark}{Remark}[section]
\let\hat\widehat
\let\tilde\widetilde
\title[Spectra of Erd\H{o}s-R\'{e}nyi hypergraphs]{Spectra of adjacency and Laplacian matrices of Erd\H{o}s-R\'{e}nyi hypergraphs}
\author[S. S. Mukherjee, D. Pal and H. Talukdar]{Soumendu Sundar Mukherjee, Dipranjan Pal and Himasish Talukdar}
\address{
    Statistics and Mathematics Unit \\
    Indian Statistical Institute \\
    203 B.T. Road, Kolkata 700108 \\
    West Bengal, India.
}
\email{ssmukherjee@isical.ac.in, dipranjanpal064@gmail.com, talukdar.himasish@gmail.com}
\begin{document}

\begin{abstract}
We study adjacency and Laplacian matrices of Erd\H{o}s-R\'{e}nyi $r$-uniform hypergraphs on $n$ vertices with hyperedge inclusion probability $p$, in the setting where $r$ can vary with $n$ such that $r / n \to c \in [0, 1)$. Adjacency matrices of hypergraphs are contractions of adjacency tensors and their entries exhibit long range correlations. We show that under the  Erd\H{o}s-R\'{e}nyi model, the expected empirical spectral distribution of an appropriately normalised hypergraph adjacency matrix converges weakly to the semi-circle law with variance $(1 - c)^2$ as long as $\frac{d_{\avg}}{r^7} \to \infty$, where $d_{\avg} = \binom{n-1}{r-1} p$. In contrast with the Erd\H{o}s-R\'{e}nyi random graph ($r = 2$), two eigenvalues stick out of the bulk of the spectrum. When $r$ is fixed and $d_{\avg} \gg n^{r - 2} \log^4 n$, we uncover an interesting Baik-Ben Arous-P\'{e}ch\'{e} (BBP) phase transition at the value $r = 3$. For $r \in \{2, 3\}$, an appropriately scaled largest (resp. smallest) eigenvalue converges in probability to $2$ (resp. $-2$), the right (resp. left) end point of the support of the standard semi-circle law, and when $r \ge 4$, it converges to $\sqrt{r - 2} + \frac{1}{\sqrt{r - 2}}$ (resp. $-\sqrt{r - 2} - \frac{1}{\sqrt{r - 2}}$). Further, in a Gaussian version of the model we show that an appropriately scaled largest (resp. smallest) eigenvalue converges in distribution to $\frac{c}{2} \zeta + \big[\frac{c^2}{4}\zeta^2 + c(1 - c)\big]^{1/2}$ (resp. $\frac{c}{2} \zeta - \big[\frac{c^2}{4}\zeta^2 + c(1 - c)\big]^{1/2}$), where $\zeta$ is a standard Gaussian. We also establish analogous results for the bulk and edge eigenvalues of the associated Laplacian matrices.
\end{abstract}

\maketitle

\section{Introduction}
\subsection{Hypergraphs and associated matrices and tensors}
A hypergraph $\cH = (V, E)$ consists of a vertex set $V$ and a set $E \subseteq 2^V$ of \emph{ hyperedges}. If $|e| = r$ for all $e \in E$, then $\cH$ is called \emph{$r$-uniform}. If hyperedges of different sizes exist, then it is called \emph{non-uniform}. Usually $V$ is taken to be the set $[n] := \{1, \ldots, n \}$. Hypergraphs are generalisations of graphs (indeed, a simple undirected graph is just a $2$-uniform hypergraph) and are very useful for modelling higher-order interactions in various types of complex networks. In particular, hypergraphs have been used for community detection in networks \cite{ghoshdastidar2014consistency, ghoshdastidar2017consistency, pal2021community, dumitriu2021partial, dumitriu2023exact}, in biology \cite{tian2009hypergraph, michoel2012alignment}, for modelling chemical reactions \cite{skvortsova2014hypergraph, flamm2015generalized, mann2023ai}, in modelling citation networks \cite{ji2016coauthorship}, in recommendation systems \cite{bu2010music} and for processing image data \cite{govindu2005tensor}, among other areas.

Adjacency matrices of graphs naturally generalise to \emph{adjacency tensors} for hypergraphs. Let $\cH$ be an $r$-uniform hypergraph, where $r \ge 3$ is an integer. The adjacency tensor $\mathcal{A} \in \{0,1\}^{n^r}$ associated with $\cH$ is defined as
\begin{align*}
   \cA_{i_1, i_2, \cdots, i_r} &= \begin{cases}
       1  & \text{if} \ \ \{i_1, i_2, \ldots, i_r\} \in E, \\
       0  & \text{otherwise},
   \end{cases} 
\end{align*}
and $\cA_{i_1, i_2, \cdots, i_r} = \cA_{\sigma(i_1), \sigma(i_2), \cdots, \sigma(i_r)}$ for any $\sigma \in \cS_n$, where $\cS_n$ is the set of all permutations of $[n]$. This is a tensor of order $r$ and dimension $n$. In this article, we consider an adjacency matrix $A$ associated to $\cH$, which is a contraction of the adjacency tensor $\cA$ and is defined as follows:
\begin{equation}\label{eq:adj_mx_defn_via_contraction}
    A_{ij} = \frac{1}{(r - 2)!}\sum_{i_3, i_4, \cdots, i_r} \cA_{i, j, i_3, i_4 \cdots, i_r}.
\end{equation}
Observe that
\begin{equation}\label{eq:adj_mx_defn}
    A_{ij} = \begin{cases}
        \sum_{e \in E} \ind(i, j \in e) & \text{if } i \ne j, \\
        0 & \text{otherwise}.
    \end{cases}
\end{equation}
We note that different aspects of the matrix $A$ have been studied in the literature \cite{lee2020robust, banerjee2021spectrum}.

Let $M = \binom{n}{r}$ and $\{e_1, \ldots, e_M\}$ be an enumeration of the hyperedges of the complete $r$-uniform hypergraph on $n$ vertices. Consider the following matrices
\[
    Q_\ell := \ba_{\ell} \ba_{\ell}^\top - \diag(\ba_\ell), \,\, 1 \le \ell \le M,
\]
where
\[
    (\ba_{\ell})_i = \begin{cases}
    1 & \text{if} \ \ i \in e_\ell, \\ 
    0 & \text{otherwise},
\end{cases}
\]
and for a vector $\bv$, $\diag(\bv)$ denotes the diagonal matrix whose diagonal entries are given by $\bv$. Note then that $A$ admits the following representation:
\begin{equation}\label{eq:GHAM_repr_lin_comb}
    A = \sum_{\ell = 1}^M h_{\ell} \, Q_\ell,
\end{equation}
where $h_{\ell}= \ind(e_\ell \in E)$ indicates if hyperedge $e_{\ell}$ is present in the hypergraph $\cH$. 

We also consider the associated Laplacian matrices. Recall that for $r = 2$, i.e. when $A$ is the adjacency matrix of a simple undirected graph, the associated (combinatorial) graph Laplacian is defined as
\[
    L_A = \diag(A\bone) - A.
\]
Where $\bone$ is an $n \times 1$ vector with all entries $1$. More generally, for any matrix $X$, define the associated Laplacian matrix as
\begin{equation}\label{eq:lap_orig}
    L_X := \diag(X \bone) - X.
\end{equation}
We also consider the variant
\begin{equation}\label{eq:lap_modified}
    \tilde{L}_X := \frac{1}{r - 1}\diag(X \bone) - X.
\end{equation}
Note that for $r = 2$, $L_X = \tilde{L}_X$, as such both of these matrices generalise the graph Laplacian.

\subsection{Preliminaries on random matrices} Let $A_n$ be an $n \times n$ Hermitian matrix with ordered eigenvalues $\lambda_1 \ge \cdots \ge \lambda_n$. The probability measure 
\[
    \mu_{A_n} := \frac{1}{n} \sum_{i = 1}^n \delta_{\lambda_i}
\]
is called the \emph{Empirical Spectral Distribution} (ESD) of $A_n$. If entries of $A_n$ are random variables defined on a common probability space $(\Omega, \cA, \bbP)$ then $\mu_{A_n}$ is a random probability measure. In that case, there is another probability measure associated to the eigenvalues, namely the \textit{Expected Empirical Spectral Distribution} (EESD) of $A_n$, which is defined via its action on bounded measurable test functions $f$ as follows:
\[
    \int f \, d\mubar_{A_n} = \bbE \int f \, d\mu_{A_n},
\]
where $\bbE$ denotes expectation with respect to $\bbP$. In random matrix theory, one is typically interested in an ensemble $(A_n)_{n \ge 1}$ of such matrices of growing dimension $n$. If the weak limit, say $\mu_{\infty}$, of the sequence $(\mubar_{A_n})_{n \ge 1}$, exists, then it is referred to as the Limiting Spectral Distribution (LSD). Often one is able to show that the random measure $\mu_{A_n}$ also converges weakly (in probability or in almost sure sense) to $\mu_{\infty}$. For a comprehensive introductory account of the theory of random matrices, we refer the reader to \cite{anderson2010introduction}.

The preeminent model of random matrices is perhaps the Wigner matrix. For us a Wigner matrix $W_n$ will be a Hermitian random matrix whose upper triangular entries $W_{n, i, j}$ are i.i.d. zero mean unit variance random variables and the diagonal entries $W_{n, i, i}$ are i.i.d. zero mean random variables with finite variance. Moreover, the diagonal and the off-diagonal entries are mutually independent. If the entries are jointly Gaussian with the diagonal entries having variance $2$, then resulting ensemble of matrices is called the Gaussian Orthogonal Ensemble (GOE). In this article, we will denote the centered Gaussian distribution with variance $\sigma^2$ by $\nu_{\Gauss, \sigma^2}$.

We also define the \emph{semi-circle distribution with variance $\sigma^2$}, henceforth denoted by $\nu_{\sc, \sigma^2}$, as the probability distribution on $\bbR$ with density
\[
    f(x) := \begin{cases}
    \frac{1}{2\pi \sigma^2} \sqrt{4 \sigma^2 - x^2} & \text{if} \, |x| \le 2 \sigma, \\
    0  & \text{otherwise}. 
\end{cases}
\]
E. Wigner proved in his famous paper \cite{wigner1958distribution} that the EESD of $n^{-1/2} W_n$ converges weakly to the standard semi-circle distribution $\nu_{\sc, 1}$. It is well known that $2k$-th moment of the standard semi-circle distribution is the $k$-th \emph{Catalan number}, defined as
\[
    C_k := \frac{1}{k + 1} \binom{2k}{k}.
\]
Catalan numbers have many interesting combinatorial interpretations. Most notably, $C_k$ counts the number of \emph{Dyck paths} of length $2k$, i.e. the number of non-negative Bernoulli walks of length $2k$ that both start and end at the origin.

\subsection{Note on asymptotic notation}
Before proceeding further, we define here various asymptotic notations used throughout the paper. For functions $f, g : \bbN \to \bbR$, we write (i) $f(n) = O(g(n))$, if there exist positive constants $n_0$ and $C$ such that $|f(n)| \le C |g(n)|$ for all $n \ge n_0$; (ii) $f(n) = o(g(n))$ or $f(n) \ll g(n)$ if $\lim_{n \to \infty} \frac{f(n)}{g(n)} = 0$ (we also write $f(n) \gg g(n)$ if $g(n) \ll f(n)$); (iii) $f(n) = \Theta(g(n))$ or $f(n) \asymp g(n)$ if $f(n) = O(g(n))$ and $g(n) = O(f(n))$; (iv) $f(n) \sim g(n)$ if $\lim_{n \to \infty} \frac{f(n)}{g(n)} = 1$; (v) $f(n) \gg g(n)$ if $\lim_{n \to \infty} \frac{g(n)}{f(n)} = 0$.

For a sequence of random variables $\{X_n\}_{n \ge 1}$, we write $X_n = O_P(1)$ if for any $\epsilon > 0$, there exists $M > 0$ such that $\sup_n \bbP(|X_n| > M) \le \epsilon$. For two sequence of random variables $\{X_n\}_{n \ge 1}$ and $\{Y_n\}_{n \ge 1}$ we write $X_n = O(Y_n)$ to mean $X_n = Z_n Y_n $ with $Z_n = O_P(1)$. 

\subsection{Our random matrix model} In this article, our main objective is to study the ESDs of adjacency matrices of \emph{Erd\H{o}s-R\'{e}nyi random $r$-uniform hypergraphs}, where each hyperedge is included independently with probability $p$, i.e. $h_{\ell} \overset{\text{i.i.d.}}{\sim} \Ber(p)$. Towards that end, consider the centered (and normalised) matrix
\[
    \tilde{A} = \frac{A - \bbE[A]}{\sqrt{p(1 - p)}}= \sum_{\ell=1}^{M} \frac{(h_{\ell} - \bbE[h_{\ell}])}{\sqrt{\Var(h_{\ell})}} \, Q_\ell = \sum_{\ell = 1}^{M} Y_{\ell} \, Q_{\ell}.
\]
Note that $Y_{\ell}$'s are i.i.d. zero mean unit variance random variables. Thus
\[
    \tilde{A}_{ij} = \begin{cases}
    \sum_{\ell = 1}^{M} Y_{\ell} \cdot \ind(i, j \in e_{\ell}) & \text{if } i \ne j, \\
    0 & \text{otherwise.}
    \end{cases}
\]
Note that
\[
    \Var( \tilde{A}_{ij} ) = \Var\bigg( \sum_{\ell = 1}^{M} Y_{\ell} \cdot \ind(i, j \in e_{\ell})\bigg) = \sum_{\ell = 1}^{M} \Var(Y_{\ell}) \ind(i, j \in e_{\ell}) = \binom{n-2}{r-2}.
\]
Therefore, we shall consider the following normalised matrix (viewed as a matrix-valued function of the vector $\bY = (Y_{\ell})_{1\le \ell \le M}$)
\begin{equation}\label{eq:gham_defn}
    H_n \equiv H_n(\bY) := \frac{\tilde{A}}{\sqrt{N}} = \frac{1}{\sqrt{N}}\sum_{\ell = 1}^{M} Y_{\ell} \, Q_{\ell},
\end{equation}
where $N = \binom{n - 2}{r - 2}$, so that the all off-diagonal entries of $H_n$ have zero mean and unit variance. This normalisation ensures that the variance of the EESD of $H_n$ is of order $n^2$ (comparable to that of a usual Wigner matrix).

\begin{definition}
Let $\bY = (Y_{\ell})_{1 \le \ell \le M}$ be independent zero mean unit variance random variables. The matrix $H_n = H_n(\bY)$ defined in \eqref{eq:gham_defn} will be referred to as a \emph{Generalised Hypergraph Adjacency Matrix} (GHAM). 
\end{definition}
A principal feature of this ensemble is that the entries show long-range correlation. Indeed,
\begin{align*}
\Cov(\tilde{A}_{ij}, \tilde{A}_{i'j'}) 
 &= \sum_{\ell = 1}^{M} 
 \sum_{\ell' = 1}^{M} \Cov(Y_{\ell},  Y_{\ell'}) \, \ind(i, j \in e_{\ell}) \ind(i', j' \in e_{\ell'}) \\
 &=  \sum_{\ell = 1}^{M} \Var(Y_{\ell}) \, \ind(i, j \in e_{\ell}) \ind(i', j' \in e_{\ell}) \\
 &= \begin{cases}
     \binom{n-4}{r-4} & \text{if} \ \ \big|\{i, j\} \cap \{i', j' \}\big| = 0, \\
     \binom{n-3}{r-3} & \text{if} \ \ \big|\{i, j\} \cap \{i', j' \}\big| = 1.
 \end{cases}
\end{align*}
Therefore
\begin{align*}
   \Cov(H_{n, ij}, H_{n, i^{'}j^{'}}) &= \begin{cases}
       \frac{ \binom{n-4}{r-4} }{\binom{n-2}{r-2}} = \frac{(r-2)(r-3)}{(n-2)(n-3)} =: \rho_n & \text{if} \ \ \big|\{i, j\} \cap \{i^{'}, j^{'} \}\big| = 0, \\
       \frac{ \binom{n-3}{r-3} }{\binom{n-2}{r-2}} = \frac{r-2}{n-2} =: \gamma_n & \text{if} \ \ \big|\{i, j\} \cap \{i^{'}, j^{'} \}\big| = 1.
   \end{cases}
\end{align*}

In this article, we are interested in the spectrum of $n^{-1/2} H_n$ in the regime where $r$ grows with $n$ in such a way that $\frac{r}{n} \to c \in [0, 1)$. Our main result regarding the bulk spectrum of $n^{-1/2} H_n$ is the following.
\begin{theorem}[LSD of the adjacency matrix]\label{thm:main}
    Suppose the entries $(Y_{\ell})_{1 \le \ell \le M}$ satisfy the Pastur-type condition given in Assumption~\ref{ass:tail_of_entries}. Then the EESD of $n^{-1/2} H_n$ converges weakly to $\nu_{\sc, (1 - c)^2}$.
\end{theorem}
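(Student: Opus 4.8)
The plan is to peel off a bounded-rank ``spike'' part of $n^{-1/2}H_n$ that is invisible to the limiting EESD, and then to compute the EESD of the remaining ``bulk'' part by the method of moments. A direct moment computation for $n^{-1/2}H_n$ will not work: already the fourth moment of $\mubar_{n^{-1/2}H_n}$ diverges whenever $r$ is comparable with $n$, because $n^{-1/2}H_n$ carries two eigenvalues of order $\sqrt n$ (coming from the directions $\bone$ and the degree-fluctuation vector). Those two eigenvalues, however, account for only $O(1/n)$ of the spectral mass, so they do not affect the weak limit of $\mubar_{n^{-1/2}H_n}$, and we may legitimately remove them.

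Concretely, write $\ba_\ell = \tfrac rn \bone + \mathbf{b}_\ell$ with $\bone^\top \mathbf{b}_\ell = 0$. Substituting this into $Q_\ell = \ba_\ell\ba_\ell^\top - \diag(\ba_\ell)$ and summing against $Y_\ell/\sqrt{nN}$ produces the exact decomposition
\[
    n^{-1/2}H_n \;=\; \widetilde{\cW}_n \;+\; \big(\cP_n - \diag(\cP_n)\big), \qquad
    \widetilde{\cW}_n := \frac1{\sqrt{nN}}\sum_{\ell=1}^M Y_\ell\Big(\mathbf{b}_\ell\mathbf{b}_\ell^\top - \diag(\mathbf{b}_\ell^{\circ 2})\Big),
\]
where $\mathbf{b}_\ell^{\circ 2}$ denotes the entrywise square and $\cP_n$ is a matrix supported on $\mathrm{span}\{\bone,\ \sum_\ell Y_\ell \mathbf{b}_\ell\}$, hence of rank at most $2$. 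By the rank inequality for distribution functions the term $\cP_n$ changes $\mubar$ by at most $2/n$ in Kolmogorov distance; and a short computation gives $\tfrac1n\,\bbE\|\diag(\cP_n)\|_F^2 \to 0$, so by the Hoffman--Wielandt inequality the diagonal term is also asymptotically negligible. It therefore suffices to show $\mubar_{\widetilde{\cW}_n} \convw \nu_{\sc,(1-c)^2}$.

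For $\widetilde{\cW}_n$ I would use the moment method. Expanding $\bbE[\tfrac1n\Tr\widetilde{\cW}_n^k]$ over set partitions of $\{1,\dots,k\}$ recording coincidences among the indices $\ell_1,\dots,\ell_k$, and using
\[
    \Tr\big(\mathbf{b}_{\ell_1}\mathbf{b}_{\ell_1}^\top\cdots \mathbf{b}_{\ell_k}\mathbf{b}_{\ell_k}^\top\big) \;=\; \prod_{s=1}^k \langle \mathbf{b}_{\ell_s}, \mathbf{b}_{\ell_{s+1}}\rangle, \qquad
    \langle \mathbf{b}_\ell,\mathbf{b}_{\ell'}\rangle = |e_\ell\cap e_{\ell'}| - \tfrac{r^2}{n},
\]
together with the structurally similar contributions from the $\diag(\mathbf{b}_\ell^{\circ2})$ terms, one finds that partitions with a block of size $\ge 3$ are of lower order, as are the ``crossing'' pair partitions. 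What survives are the $C_m$ non-crossing pair partitions when $k=2m$, each of which evaluates to $(1-c)^{2m}$ after repeatedly collapsing the cyclic product of inner products by means of the identity
\[
    \sum_{e'}\Big(|e_1\cap e'| - \tfrac{r^2}{n}\Big)\Big(|e'\cap e_2| - \tfrac{r^2}{n}\Big) \;=\; M\cdot \frac{r(n-r)}{n(n-1)}\Big(|e_1\cap e_2| - \tfrac{r^2}{n}\Big),
\]
the sum running over all $r$-subsets $e'$; odd moments vanish in the limit for parity reasons. Since $\nu_{\sc,(1-c)^2}$ is determined by its moments, this gives $\mubar_{\widetilde{\cW}_n}\convw\nu_{\sc,(1-c)^2}$, and hence the theorem. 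The Pastur-type assumption enters in the standard way: after truncating the $Y_\ell$'s (which, by that assumption, changes the EESD negligibly) all moments are finite and the size-$\ge 3$-block terms are controlled as above.

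The main obstacle is the moment computation for $\widetilde{\cW}_n$, which must be carried out uniformly over all growth rates $r=r(n)$ with $r/n\to c$. The subtle point is that the diagonal corrections $\diag(\mathbf{b}_\ell^{\circ 2})$ contribute at leading order when $r$ stays bounded---they are exactly what replaces the ``naive'' value $\tfrac{r}{r-1}(1-c)^2$ of the second moment by $(1-c)^2$---while they become negligible as soon as $r\to\infty$; one has to check that, at every even order $2m$, each non-crossing pair partition nonetheless contributes precisely $(1-c)^{2m}$, and that the crossing and higher-order terms are $O(n^{-1})$-smaller for every such growth rate simultaneously. Establishing the decomposition above and the two perturbation bounds is comparatively routine.
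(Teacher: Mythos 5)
Your preliminary reductions are correct and would indeed reduce the theorem to a statement about the centred bulk: the exact decomposition via $\ba_\ell=\frac{r}{n}\bone+\mathbf{b}_\ell$, the rank-$2$ nature of $\cP_n$ (it is supported on $\mathrm{span}\{\bone,\sum_\ell Y_\ell\mathbf{b}_\ell\}$), the bound $\frac1n\bbE\|\diag(\cP_n)\|_F^2=O(1/n)$, and the collapsing identity $\sum_{e'}\langle\mathbf{b}_{e_1},\mathbf{b}_{e'}\rangle\langle\mathbf{b}_{e'},\mathbf{b}_{e_2}\rangle=\binom{n-2}{r-1}\langle\mathbf{b}_{e_1},\mathbf{b}_{e_2}\rangle=M\frac{r(n-r)}{n(n-1)}\langle\mathbf{b}_{e_1},\mathbf{b}_{e_2}\rangle$ all check out. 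The problem is that everything after that — the claim $\mubar_{\widetilde{\cW}_n}\convw\nu_{\sc,(1-c)^2}$ — is the entire analytic content of the theorem, and your proposal only announces a plan for it. The matrix $\widetilde{\cW}_n$ is not a Wigner matrix (its entries remain correlated, and it is a sum of dependent rank-one-minus-diagonal blocks), so no off-the-shelf moment theorem applies; the combinatorics must be done from scratch, uniformly over all regimes of $r(n)$. Your identity evaluates only cyclic products of the pure pieces $\mathbf{b}_\ell\mathbf{b}_\ell^\top$, whereas — as you yourself observe — the corrections $\diag(\mathbf{b}_\ell^{\circ 2})$ contribute at leading order when $r$ stays bounded (already at order $2$ they are what turns $\tfrac{r}{r-1}$ into $1$); traces of the mixed products are no longer products of pairwise overlaps but involve higher-order joint overlap statistics of hyperedges, and no mechanism is offered for evaluating these at general order $2m$. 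Likewise, the negligibility of crossing pairings and of blocks of size $\ge 3$ is asserted, not proved: under Assumption~\ref{ass:tail_of_entries} the entries can only be truncated at level $\varepsilon K_n=\varepsilon\sqrt{nN}/r^4$, which is exponentially large when $r\asymp n$, so $\bbE|Y_\ell|^{j}\le(\varepsilon K_n)^{j-2}$ and the counting must beat these factors (together with the recentering/variance adjustment after truncation). Your closing paragraph explicitly defers exactly these points, so the proof as written has a genuine gap at its core.

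It is worth noting that the paper's route is designed to avoid precisely this computation: Theorem~\ref{thm:univ} compares Stieltjes transforms via a Lindeberg swap (Proposition~\ref{prop:bulk_universality}), where Assumption~\ref{ass:tail_of_entries} enters through explicit derivative bounds on the resolvent trace, reducing matters to Gaussian weights; in the Gaussian case (Theorem~\ref{thm:eesd_gaussian}) one uses the fact — valid only for Gaussians — that the ANOVA surrogate $G_n'=G_n-\diag(G_n)$, with $G_n=\alpha_nU\bone\bone^\top+\beta_n(\bV\bone^\top+\bone\bV^\top)+\theta_nZ_n$ and matched covariances, is \emph{equal in law} to $H_n$, after which only rank and Hoffman--Wielandt perturbation bounds and the known GOE semicircle rate are needed, with $\theta_n\to 1-c$. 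Your decomposition has the attractive feature of being exact for arbitrary entries, but it transfers the whole burden onto a from-scratch moment analysis of a correlated ensemble. If you want to salvage your approach economically, combine your decomposition with a Lindeberg-type comparison so the bulk computation is only needed for Gaussian weights; otherwise you must carry out the full moment combinatorics (diagonal terms, truncation, uniformity in $r$) in detail.
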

For two probability measures $\mu$ an $\nu$, let $\mu \boxplus \nu$ denote their free additive convolution (defined in Section~\ref{sec:lsd_gaussian_GHAM}).
\begin{theorem}[LSDs for Laplacians]\label{thm:main_laplacian}
~ 
Suppose the entries $(Y_{\ell})_{1 \le \ell \le M}$ satisfy the Pastur-type condition given in Assumption~\ref{ass:tail_of_entries}.
   \begin{enumerate}
       \item [(i)] Suppose that $r$ is fixed. Then the EESD of $n^{-1/2}L_{H_n}$ converges weakly to $\nu_{\Gauss, r - 1} \boxplus \nu_{\sc, 1}$ and the EESD of $n^{-1/2}\tilde{L}_{H_n}$ converges weakly to $\nu_{\Gauss,\frac{1}{r - 1}} \boxplus \nu_{\sc, 1}$. 
       \item [(ii)] Suppose $r \to \infty$ such that $r/n \to c \in [0, 1)$.
       Then the EESD of $(nr)^{-1/2} L_{H_n}$ converges weakly to $\nu_{\Gauss, 1} \boxplus \nu_{\Gauss, c}$ and the EESD of $n^{-1/2}\tilde{L}_{H_n}$ converges weakly to $\nu_{\sc, (1 - c)^2}$.
\end{enumerate}
In fact, for $L_{H_n}$ we only need the weaker Assumption~\ref{ass:tail_of_entries_variation} on the entries.
\end{theorem}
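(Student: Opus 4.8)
The plan is to write each Laplacian as a diagonal matrix built from the row sums of $H_n$ plus a multiple of $H_n$ itself, to identify the limiting spectral measures of the two pieces separately, and to show that in the limit they become freely independent, so that the spectrum of the sum is governed by the appropriate free additive convolution. Setting $D_n := \diag(H_n\bone)$ we have $n^{-1/2}L_{H_n} = n^{-1/2}D_n - n^{-1/2}H_n$ and $n^{-1/2}\tilde L_{H_n} = \tfrac{1}{r-1}n^{-1/2}D_n - n^{-1/2}H_n$, and from \eqref{eq:gham_defn} one computes $(H_n\bone)_i = \tfrac{r-1}{\sqrt N}\sum_{\ell\,:\,i\in e_\ell}Y_\ell$, a centred sum of $\binom{n-1}{r-1}$ i.i.d.\ unit-variance variables. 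Consequently $\Var\big((D_n)_{ii}\big) = (n-1)(r-1)$, the correlation $\Corr\big((D_n)_{ii},(D_n)_{jj}\big) = \tfrac{r-1}{n-1}$ for $i\ne j$, and the cross-covariances $\Cov\big((H_n\bone)_i, H_{n,jk}\big)$ are at most $r-1$ in absolute value.

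\emph{Step 1 (the diagonal piece).} Under the Pastur-type tail condition, a Lindeberg--Feller argument shows that the appropriately scaled diagonal entries converge in law to a centred Gaussian: variance $r-1$ for $n^{-1/2}D_n$ when $r$ is fixed; variance $\tfrac{n-1}{(r-1)n}\to 0$ for $\tfrac{1}{r-1}n^{-1/2}D_n$; and variance $\tfrac{(n-1)(r-1)}{nr}\to 1$ for $(nr)^{-1/2}D_n$ when $r\to\infty$. To upgrade the marginals to a statement about the empirical distribution of the diagonal I would control the variance of linear statistics $\tfrac1n\sum_i f\big((D_n)_{ii}\big)$. For fixed $r$ the off-diagonal correlations are $O(1/n)$, the empirical measure concentrates, and the EESDs of $n^{-1/2}D_n$ and $\tfrac{1}{r-1}n^{-1/2}D_n$ converge to $\nu_{\Gauss,r-1}$ and $\nu_{\Gauss,1/(r-1)}$ respectively; for $\tfrac{1}{r-1}n^{-1/2}D_n$ with $r\to\infty$ the piece is moment-negligible. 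For $(nr)^{-1/2}D_n$ with $r\to\infty$ the correlation $\tfrac{r-1}{n-1}\to c$ does not vanish, so its limiting empirical measure is genuinely random; separating the ``idiosyncratic'' and ``common-mode'' components of $\sum_{\ell:i\in e_\ell}Y_\ell$ (the latter from hyperedges shared between vertices) and averaging over the common mode is what I expect to produce the extra $\boxplus\,\nu_{\Gauss,c}$ factor for $(nr)^{-1/2}L_{H_n}$.

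\emph{Step 2 (the off-diagonal piece and assembly).} By Theorem~\ref{thm:main} the EESD of $n^{-1/2}H_n$ converges to $\nu_{\sc,(1-c)^2}$, which is $\nu_{\sc,1}$ when $r$ is fixed; when $r\to\infty$ the matrix $(nr)^{-1/2}H_n = r^{-1/2}(n^{-1/2}H_n)$ is moment-negligible, and its at-most-two spike eigenvalues are $O(1)$ but are discarded by eigenvalue interlacing without changing the EESD, so $n^{-1/2}\tilde L_{H_n}$ has EESD converging to $\nu_{\sc,(1-c)^2}$ directly. It then remains, for $n^{-1/2}L_{H_n}$ and $n^{-1/2}\tilde L_{H_n}$ with $r$ fixed, to prove that $n^{-1/2}D_n$ and $n^{-1/2}H_n$ are asymptotically free. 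I would do this by the trace/moment method: expand $\tfrac1n\bbE\Tr\big[(n^{-1/2}D_n - n^{-1/2}H_n)^k\big]$ as a sum over words in the two matrices, use that $H_n$ has zero diagonal and that $\Cov\big((H_n\bone)_i, H_{n,jk}\big)$ is at most $r-1$ (hence $o(n)$ for fixed $r$), and show that every word containing both letters contributes $o(1)$; the surviving words reproduce the moments of $\nu_{\Gauss,r-1}\boxplus\nu_{\sc,1}$, respectively $\nu_{\Gauss,1/(r-1)}\boxplus\nu_{\sc,1}$. The sign of the $H_n$-term is immaterial since $\nu_{\sc,\cdot}$ is symmetric, and all the limiting measures have Gaussian-type (moment-determinate) tails, so the moment method pins them down.

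\emph{Main obstacle.} The crux is the asymptotic-freeness step, since $D_n$ is a deterministic function of the very weights $Y_\ell$ that define $H_n$ --- indeed $(H_n\bone)_i$ is strongly correlated with the entire $i$-th row of $H_n$. One must show that in the $\tfrac1n\bbE\Tr$ expansion, every contribution in which a weight $Y_\ell$ occurring inside a $D_n$-letter is contracted against the same $Y_\ell$ inside an $H_n$-letter is of lower order --- a more delicate version of the index-counting behind the known semicircle-plus-normal law for the Laplacian of a Wigner matrix, further complicated by the long-range correlations $\rho_n,\gamma_n$ carried by $H_n$. A secondary point is reducing from entries satisfying only the Pastur-type condition (or, for $L_{H_n}$, the weaker Assumption~\ref{ass:tail_of_entries_variation}) to bounded entries via the usual truncate-and-recentre device, using continuity of $\boxplus$ and the stability of the relevant trace moments under truncation.
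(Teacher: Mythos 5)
Your decomposition $L_{H_n}=D_n-H_n$, $\tilde L_{H_n}=\frac{1}{r-1}D_n-H_n$ and your reduction for $n^{-1/2}\tilde L_{H_n}$ when $r\to\infty$ (diagonal piece negligible, bulk given by Theorem~\ref{thm:main}) agree with the paper, but the two steps that carry all the difficulty are not supplied, and one of them rests on a claim that is false as stated. For fixed $r$, you propose to get $\nu_{\Gauss,r-1}\boxplus\nu_{\sc,1}$ by showing ``every word containing both letters contributes $o(1)$.'' That cannot be the mechanism: mixed words do not vanish --- for instance $\frac1n\bbE\Tr\big[(n^{-1/2}D_n)^2(n^{-1/2}H_n)^2\big]\to r-1$ --- and if they all vanished the limiting moments would be $m_k(\nu_{\Gauss,r-1})+m_k(\nu_{\sc,1})$, which are not the moments of any free convolution. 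What must be proved is that the mixed moments take exactly the values dictated by asymptotic freeness (vanishing of alternating centred words), despite $D_n$ being built from the very same $Y_\ell$'s as $H_n$ and despite the long-range correlations $\gamma_n,\rho_n$ inside $H_n$; your sketch explicitly defers this (``main obstacle''), so the theorem is not proved. The paper never does this word counting for general entries: it first Gaussianizes via the Lindeberg/Stieltjes-transform comparison (Theorem~\ref{thm:univ}, Propositions~\ref{prop:bulk_universality_laplacian_orig} and \ref{prop:bulk_universality_laplacian}), then uses the exact ANOVA representation \eqref{eq:GHAM_decomp} of the Gaussian GHAM as a rank-two part plus a GOE, decouples the Laplacian diagonal $\diag(Z_n\bone)$ from $Z_n$ by the Bryc--Dembo--Jiang-type moment comparison of Lemma~\ref{lem:BDJ_replacement}, and only then obtains freeness from the orthogonal invariance of the GOE through Theorem~\ref{thm:pastur_free_conv}. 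Orthogonal invariance is precisely what your general-entry moment computation lacks, and the truncate-and-recentre remark does not substitute for the Gaussianization step.

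For $(nr)^{-1/2}L_{H_n}$ with $r\to\infty$ your mechanism also does not deliver the stated limit. Averaging over the common mode of an equicorrelated Gaussian diagonal produces a \emph{classical} location mixture, not a free convolution: the EESD of a diagonal matrix is just the average of its marginal laws, so the EESD of $(nr)^{-1/2}\diag(H_n\bone)$ is exactly $\nu_{\Gauss,\,(n-1)(r-1)/(nr)}$, and since the off-diagonal part is $W_1$-negligible (Lemma~\ref{lem:wass-1-bound}) your route leads to a Gaussian limit with second moment $1$, whereas $\nu_{\Gauss,1}\boxplus\nu_{\Gauss,c}$ has second moment $1+c$ and is not Gaussian. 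So the ``common-mode averaging gives the extra $\boxplus\,\nu_{\Gauss,c}$ factor'' step would fail; the paper instead reduces to the matrix $\sqrt{r/n}\,U I+\diag(\bV)$ and invokes Theorem~\ref{thm:pastur_free_conv} with the random multiple of the identity as the invariant summand (proof of Theorem~\ref{thm:eesd_gaussian_laplacian}(ii)). You should confront this tension explicitly --- your own (correct) computation that the classical mixture has variance $1$ is incompatible with simply declaring a $\boxplus\,\nu_{\Gauss,c}$ factor --- rather than leaving it as an expectation; as written, this part of your argument neither proves the statement nor reproduces the paper's route to it.
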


Theorems~\ref{thm:main} and \ref{thm:main_laplacian} follow from a universality result (see Theorem~\ref{thm:univ}) and an analysis of the Gaussian case, where $(Y_{\ell})_{1 \le \ell \le M}$ are i.i.d. standard Gaussians (see Theorems~\ref{thm:eesd_gaussian} and \ref{thm:eesd_gaussian_laplacian}).
In the regime $r/n \to 0$, we also prove effective concentration inequalities for the ESD (see Corollary~\ref{cor:conc_esd})  assuming further conditions on the entries $Y_{\ell}$, which enable us to show in-probability weak convergence of the ESDs (see Corollary~\ref{cor:conv_esd}). Further, if $r \sqrt{\log n} / n \to 0$, we have almost sure weak convergence of the ESDs. The question of almost sure weak convergence of the ESDs in the full regime $r / n \to c \in [0, 1)$ will be considered in a future work. In Figure~\ref{fig:esd_gaussian_model}, we show the ESDs of $300 \times 300$ GHAMs with Gaussian entries for various choices of $r$. 

\begin{figure}[!t]
    \centering
    \begin{tabular}{ccc}
        \includegraphics[width = 0.3\textwidth]{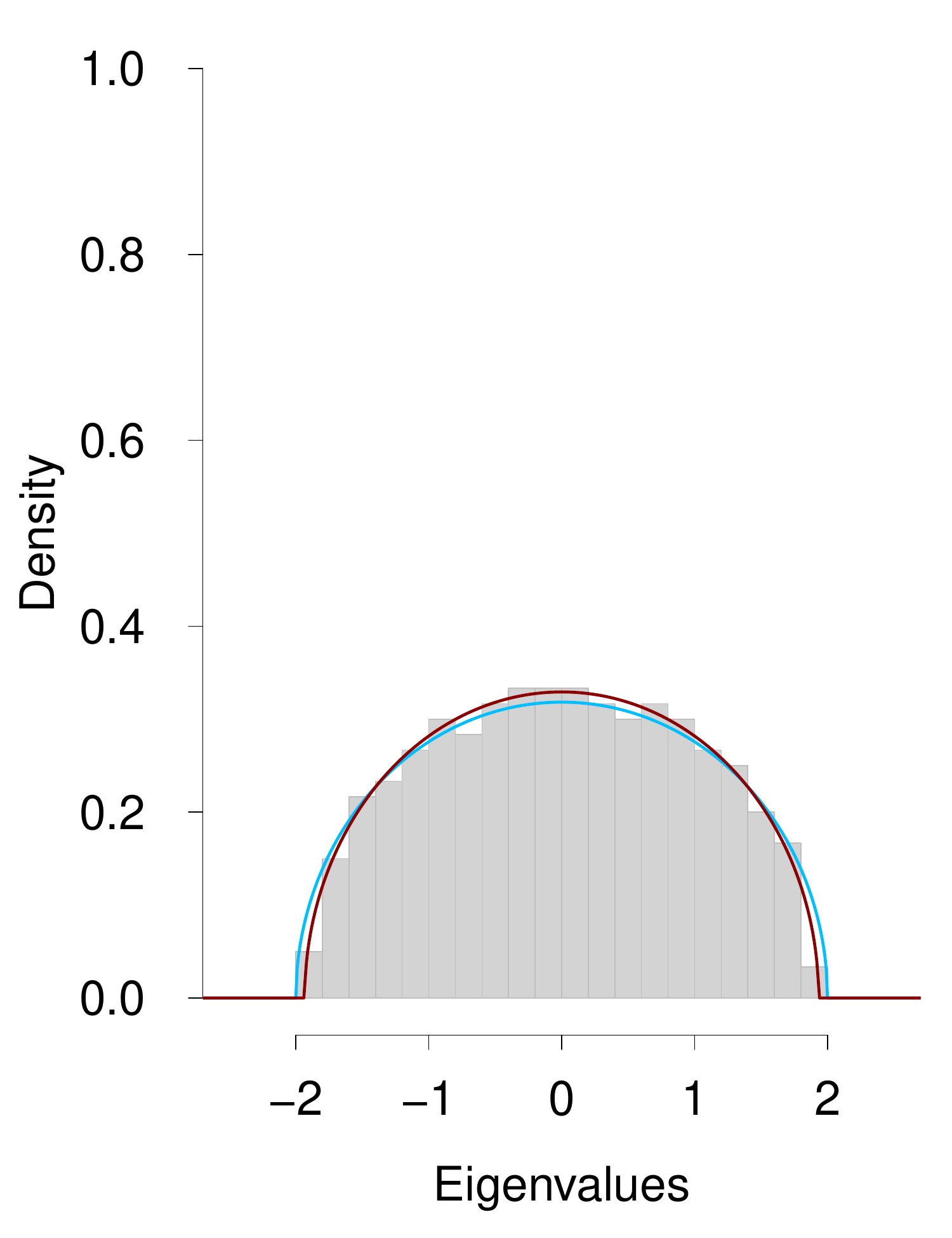} & \includegraphics[width = 0.3\textwidth]{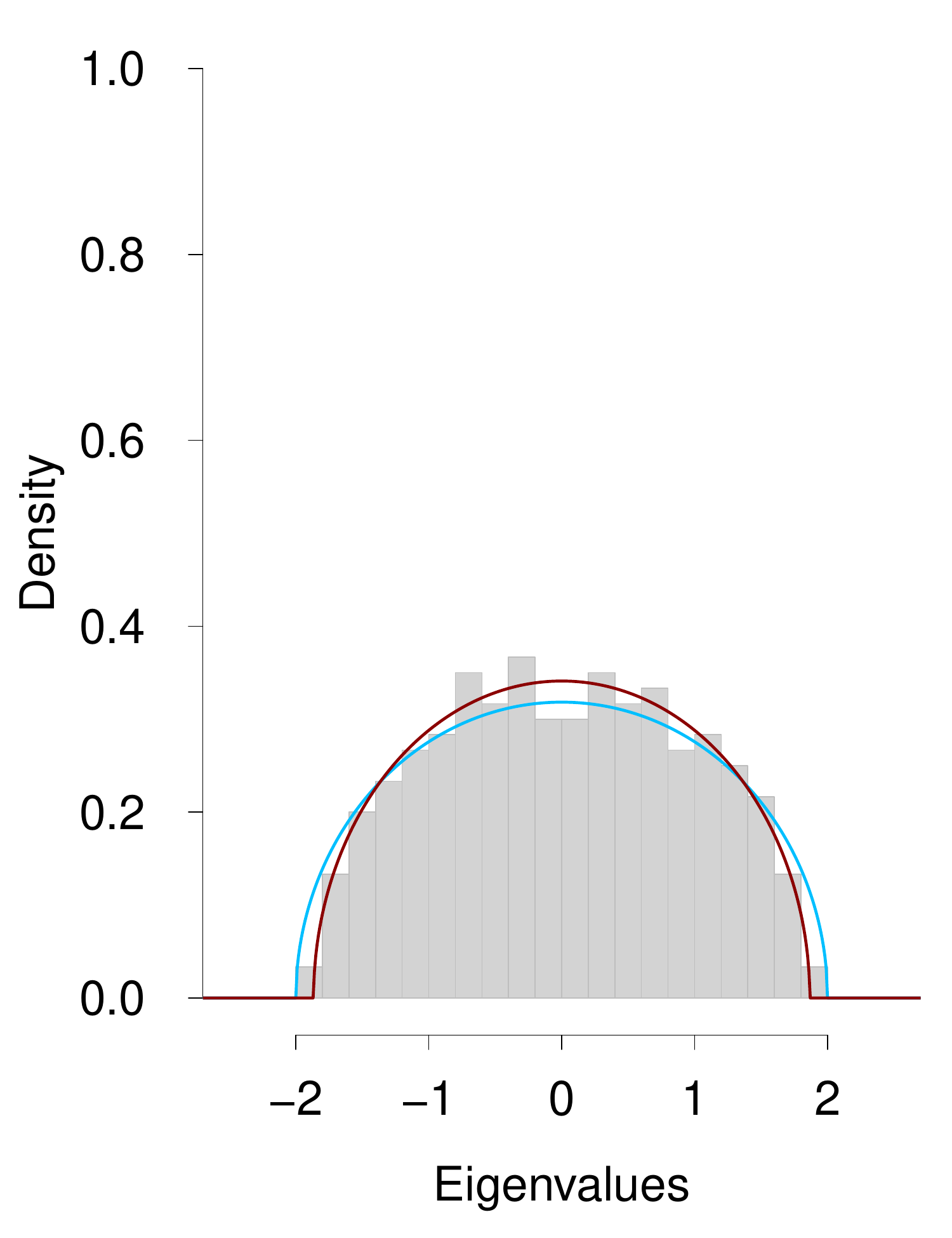} & \includegraphics[width = 0.3\textwidth]{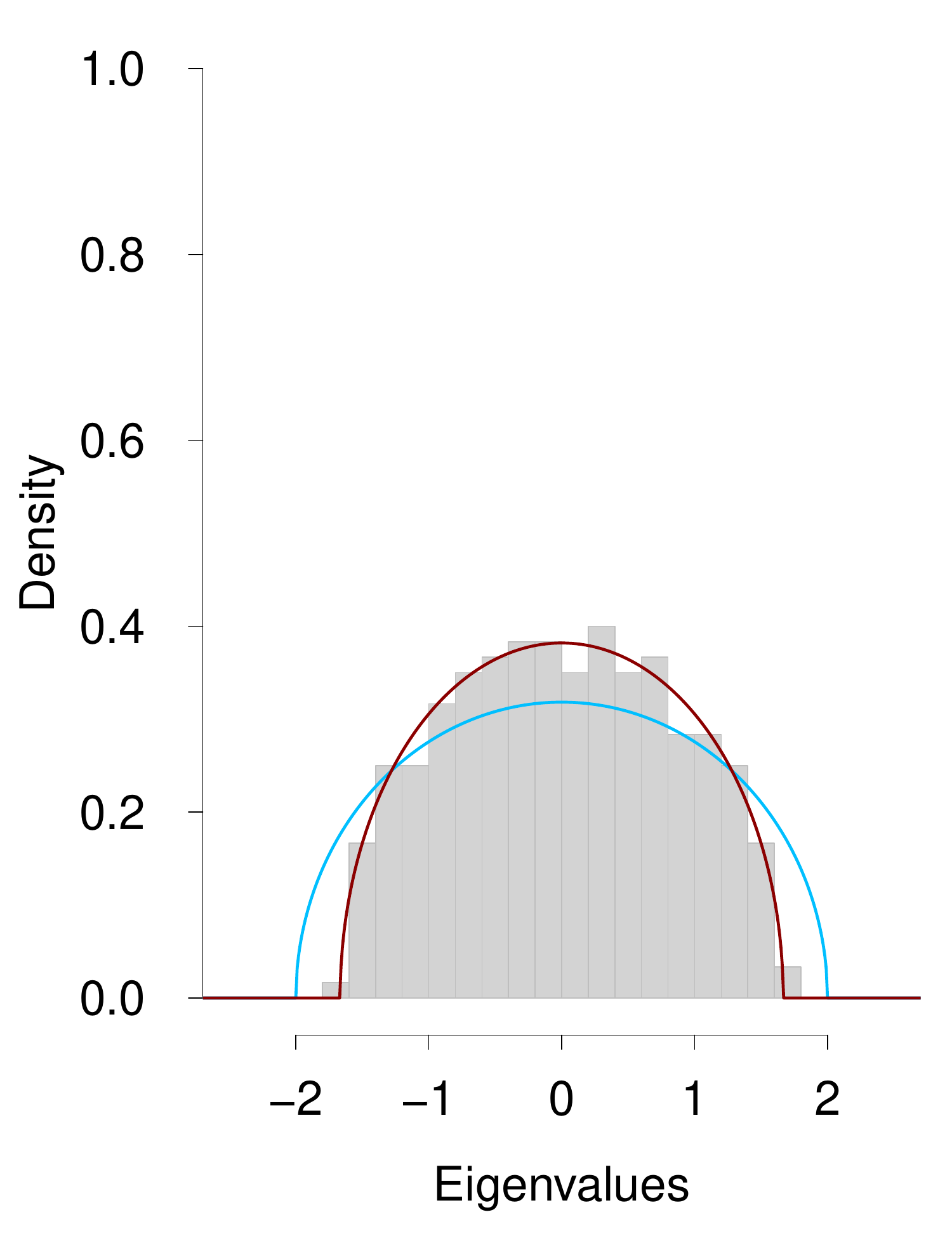} \\
         (a) $r = 10$ & (b) $r = 20$ & (c) $r = 50$ \\
        \includegraphics[width = 0.30\textwidth]{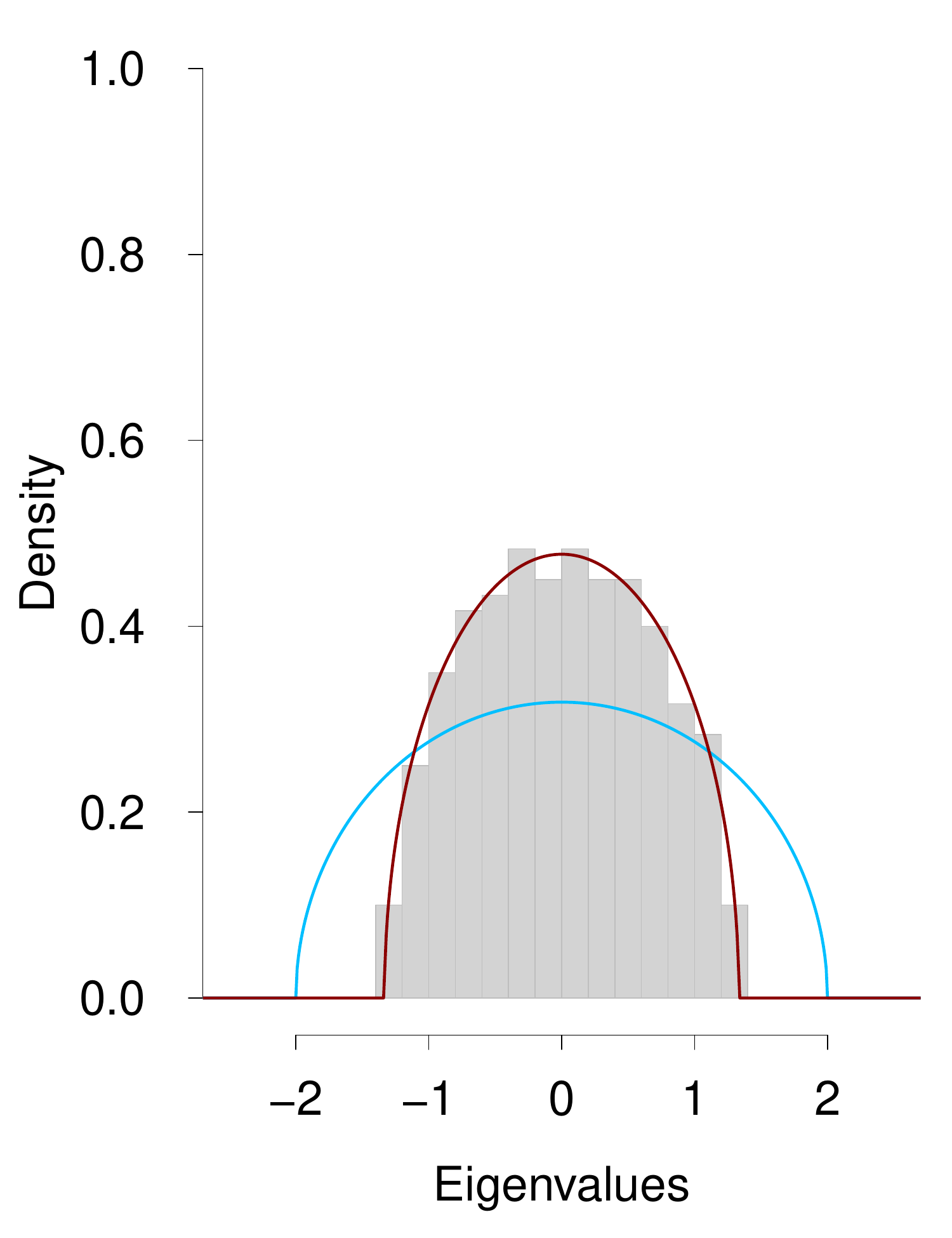} & \includegraphics[width = 0.30\textwidth]{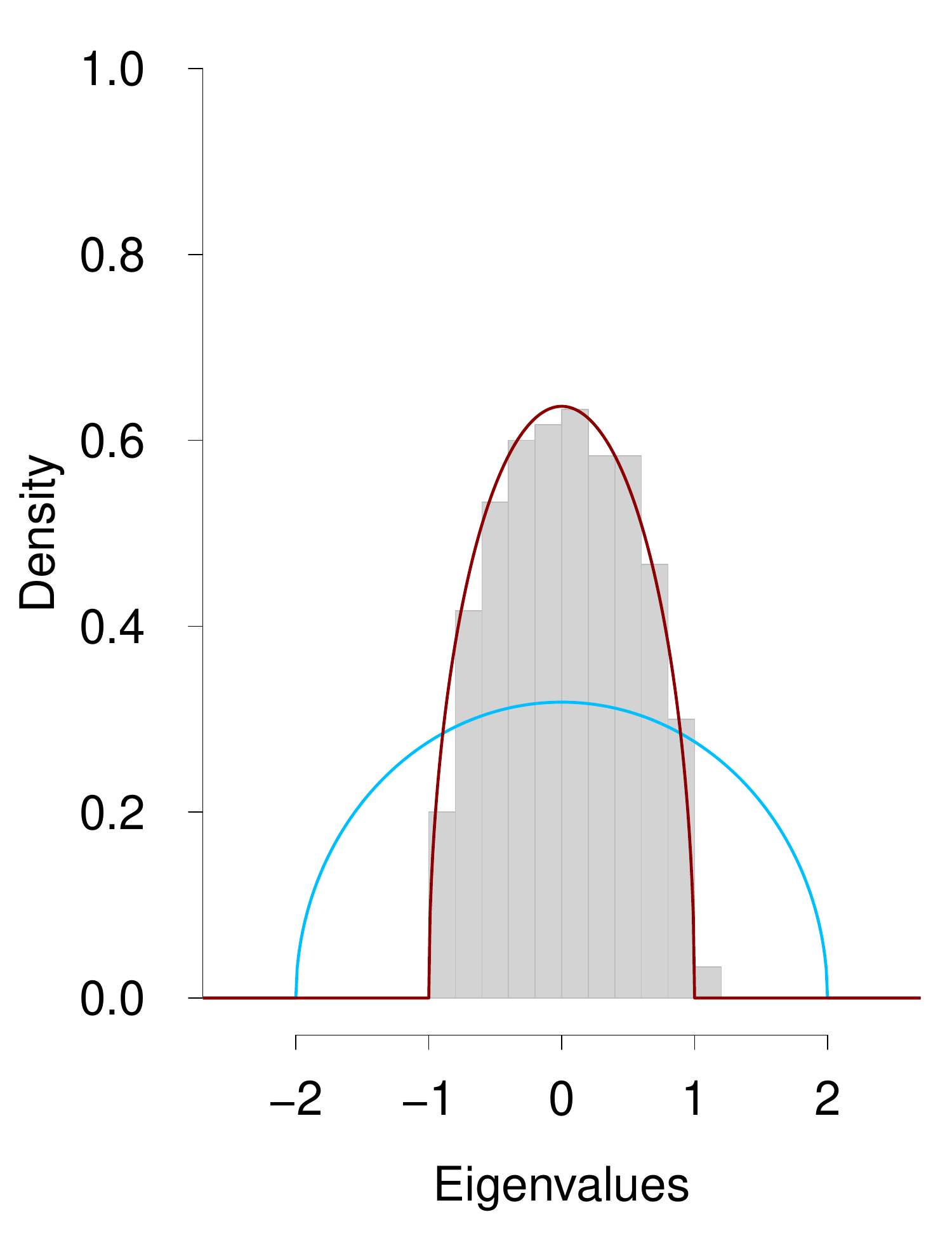} & \includegraphics[width = 0.30\textwidth]{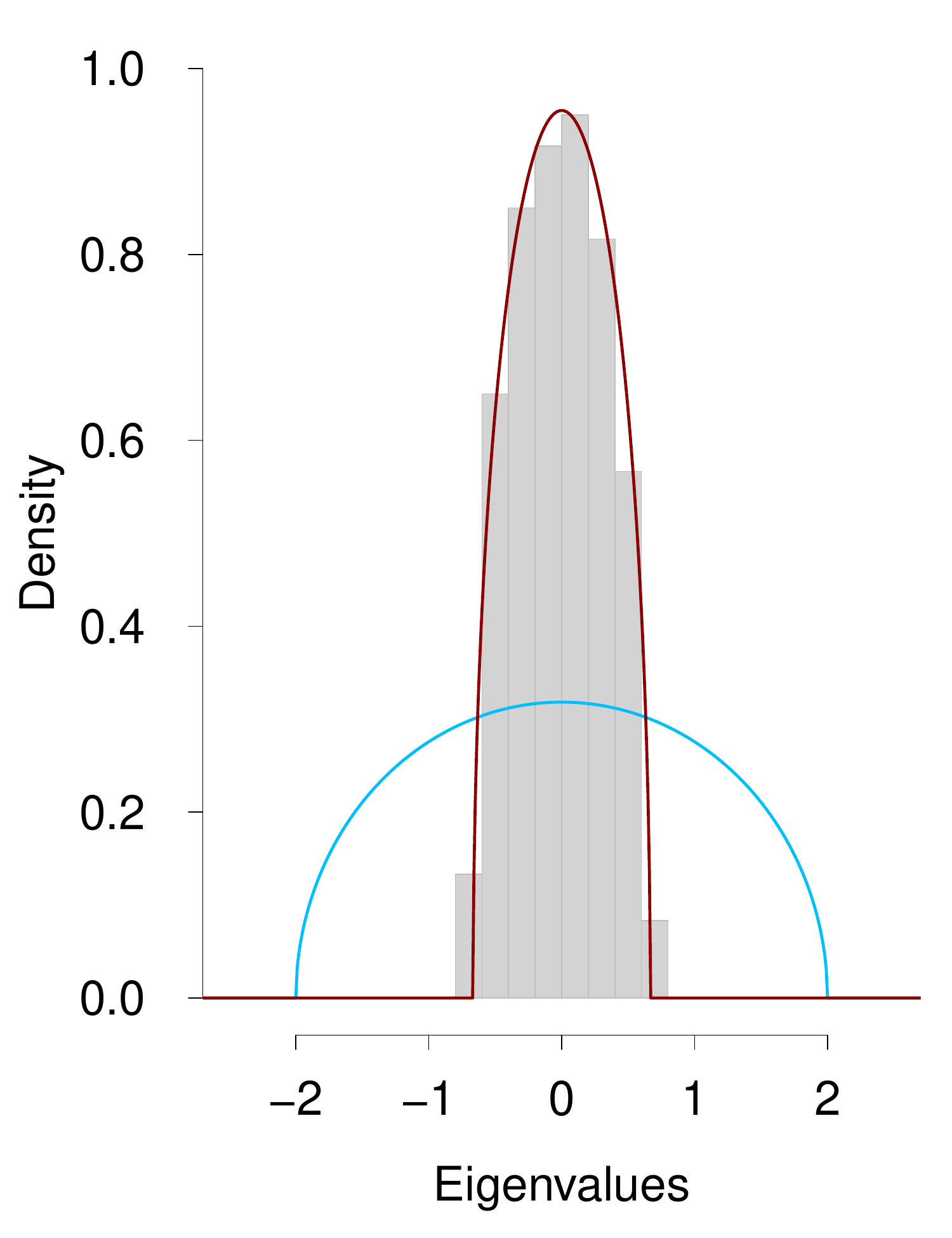} \\
         (d) $r = 100$ & (e) $r = 150$ & (f) $r = 200$
    \end{tabular}
    \caption{Histograms of the ESDs of $300 \times 300$ generalised hypergraph adjacency matrices for different values of $r$ (one realisation each). Each histogram is based on a single realisation. The blue curves depict the density of a standard semi-circle law. The red curves are the densities of semi-circle laws with variance $(1 - r/n)^2$.}
    \label{fig:esd_gaussian_model}
\end{figure}

Let us now briefly comment on the implications of the above result in the context of hypergraphs. Let the hyperedge inclusion probability $p$ be potentially dependent on both $n$ and $r$. Let us also assume for simplicity that $p$ is bounded away from $1$. In analogy with random graphs, the quantity $d_{\avg} = \binom{n - 1}{r - 1} p$ may be thought of as the ``average degree'' of a vertex. In fact, it is the expected number of hyperedges containing the said vertex. For $r = 2$, i.e. random graphs, it is well known that the limit of the EESD is the semi-circle law as long as $d_{\avg} = (n - 1) p \to \infty$, see \cite{ding2010spectral} for more details. We show in Example~\ref{exm:sparse_hypg} that our assumptions on the entries are satisfied as long as $d_{\avg} / r^7 \to \infty$ (in fact, for the Laplacian $L_{H_n}$ we only need that $d_{\avg}/r^4 \to \infty$). In the setting of fixed $r$, we thus have a semi-circle limit as long as $d_{\avg} \to \infty$. However the the factor $r^7$ is likely sub-optimal and an artefact of the Lindeberg exchange argument \cite{chatterjee2005simple} that we employ. Consider also the setting where $r / n \to c \in (0, 1)$. Let $I(x) = - x \log x - (1 - x) \log (1 - x)$ denote the entropy of a $\Ber(p)$ random variable. Then note that
\[
    \frac{d_{\avg}}{r^7} \sim \frac{\exp(I(c) n)}{n^{15/2} c^7 \sqrt{c(1 - c)}} p.
\]
Thus for obtaining the limit $\nu_{\sc, (1 - c)^2}$, we need
\[
    p \gg n^{15/2} \exp(- I(c) n).
\]
It would be interesting to understand what happens at complementary regimes of sparsity. We leave this for future work.

Unlike the random graph setting ($r = 2$), two eigenvalues stick out of the bulk of the spectrum. We can derive limits of appropriately scaled largest and smallest eigenvalues for the Gaussian GHAM. The following is an abridged version of Theorem~\ref{thm:extreme_eigen}.
\begin{theorem}
Let $H_n$ be a Gaussian GHAM and suppose that $\zeta$ is a standard Gaussian. In the regime $r/n \to c \in (0,1)$, we have 
  \begin{align*}
      \frac{\lambda_1(H_n)}{n} \xrightarrow{d} \frac{c}{2}\zeta + \sqrt{\frac{c^2}{4}\zeta^2 +c(1-c)} \quad \text{and} \quad \frac{\lambda_n(H_n)}{n} \xrightarrow{d} \frac{c}{2}\zeta - \sqrt{\frac{c^2}{4}\zeta^2 + c(1-c)}.
  \end{align*}
  When $r \to \infty$ and $r/n \to 0$, we need a rescaling:
 \begin{align*}
    \frac{\lambda_1(H_n)}{\sqrt{nr}} \xrightarrow{p} 1 \quad \text{and} \quad \frac{\lambda_n(H_n)}{\sqrt{nr}} \xrightarrow{p} -1.
 \end{align*}
 Finally, when $r$ is fixed, we have
     \begin{align*}
        \frac{\lambda_1(H_n)}{\sqrt{n}} &\convp
        \begin{cases}
            \sqrt{r-2} + \frac{1}{\sqrt {r-2}} & \text{ if } r \geq 4, \\
            2 &\text{ if } r \leq 3,
        \end{cases} \\
        \frac{\lambda_n(H_n)}{\sqrt{n}}&\convp
        \begin{cases}
            -\sqrt{r-2} - \frac{1}{\sqrt {r-2}} & \text{ if } r \geq 4, \\
            -2 & \text{ if } r \leq 3.
        \end{cases}
    \end{align*}
\end{theorem}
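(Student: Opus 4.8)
The plan is to convert the Gaussian GHAM into a finite-rank deformation of a Wigner matrix via an exact distributional identity, and then to extract the edge behaviour separately in each of the three regimes. Since $H_n$ has zero diagonal and, for $i \neq j$, the entries $H_{n,ij}$ are jointly centred Gaussian with $\Cov(H_{n,ij}, H_{n,i'j'})$ depending only on $|\{i,j\} \cap \{i',j'\}| \in \{0,1,2\}$ (taking the values $\rho_n$, $\gamma_n$, $1$ respectively, as computed above), a direct matching of covariances shows that
\[
  H_n \stackrel{d}{=} \delta_n W + \theta_n\big(\mathbf{z}\bone^\top + \bone\mathbf{z}^\top\big) + \sqrt{\rho_n}\, g\, \bone\bone^\top - \Delta_n,
\]
where $W$ is a symmetric Gaussian matrix with zero diagonal and unit off-diagonal variances, $g \sim \nu_{\Gauss,1}$, $\mathbf{z} = (z_i)_{i=1}^n$ has i.i.d.\ $\nu_{\Gauss,1}$ coordinates, these three are independent, $\Delta_n$ is the (random) diagonal matrix making the right-hand side have vanishing diagonal, and
\[
  \theta_n^2 = \gamma_n - \rho_n = \frac{(r-2)(n-r)}{(n-2)(n-3)}, \qquad \delta_n^2 = 1 - 2\gamma_n + \rho_n = \frac{(n-r)(n-r-1)}{(n-2)(n-3)}
\]
(both nonnegative, with $2\theta_n^2 + \rho_n + \delta_n^2 = 1$). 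Standard bounds give $\|\Delta_n\|_{\op} = O_P(\sqrt{\log n})$ and $\|\delta_n W\|_{\op} = (2 + o(1))\, \delta_n\sqrt{n}$ with high probability, so by Weyl's inequality it is enough to analyse the finite-rank deformation $\delta_n W + \tilde{P}_n$ with $\tilde{P}_n := \theta_n(\mathbf{z}\bone^\top + \bone\mathbf{z}^\top) + \sqrt{\rho_n}\, g\, \bone\bone^\top$, a matrix of rank at most $2$ with range $\mathrm{span}(\bone, \mathbf{z})$. In the orthonormal basis $\bone/\sqrt{n}$, $\mathbf{z}_\perp/\|\mathbf{z}_\perp\|$, where $\zeta_0 := \bone^\top\mathbf{z}/\sqrt{n}$ and $\mathbf{z}_\perp := \mathbf{z} - (\bone^\top\mathbf{z}/n)\bone$, the operator $\tilde{P}_n$ is the $2 \times 2$ matrix
\[
  B_n = \begin{pmatrix} \sqrt{\rho_n}\, g\, n + 2\theta_n\sqrt{n}\, \zeta_0 & \theta_n\sqrt{n}\, \|\mathbf{z}_\perp\| \\ \theta_n\sqrt{n}\, \|\mathbf{z}_\perp\| & 0 \end{pmatrix};
\]
for $r \ge 3$, $\det B_n < 0$ a.s., so $\lambda_1(\tilde{P}_n) = \lambda_+(B_n) > 0 > \lambda_-(B_n) = \lambda_n(\tilde{P}_n)$ and all other eigenvalues of $\tilde{P}_n$ vanish (while $\tilde{P}_n = 0$ when $r = 2$).

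If $r/n \to c \in (0,1)$ then $\sqrt{\rho_n} \to c$, $\theta_n \to \sqrt{c(1-c)}$, $\|\mathbf{z}_\perp\|/\sqrt{n} \convp 1$ and $\zeta_0 = O_P(1)$, so $n^{-1}B_n \convd \left(\begin{smallmatrix} c\zeta & \sqrt{c(1-c)} \\ \sqrt{c(1-c)} & 0 \end{smallmatrix}\right)$ with $\zeta \sim \nu_{\Gauss,1}$ (taking $\zeta = g$); the extreme eigenvalues of this $2 \times 2$ matrix are exactly $\frac{c}{2}\zeta \pm \big(\frac{c^2}{4}\zeta^2 + c(1-c)\big)^{1/2}$. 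Since $\|\delta_n W\|_{\op} + \|\Delta_n\|_{\op} = O_P(\sqrt{n}) = o_P(n)$, Weyl's inequality together with Slutsky's theorem give $\lambda_1(H_n)/n \convd \frac{c}{2}\zeta + \big(\frac{c^2}{4}\zeta^2 + c(1-c)\big)^{1/2}$ and similarly for $\lambda_n(H_n)/n$. If instead $r \to \infty$ with $r/n \to 0$, then $\theta_n\sqrt{n} \sim \sqrt{r} \to \infty$ whereas $\sqrt{\rho_n}\, n \asymp r$ and $\|\delta_n W\|_{\op} = O(\sqrt{n})$ are both $o(\sqrt{nr})$; hence $(nr)^{-1/2}B_n \convp \left(\begin{smallmatrix} 0 & 1 \\ 1 & 0 \end{smallmatrix}\right)$, with eigenvalues $\pm 1$, and dividing through by $\sqrt{nr}$ and applying Weyl's inequality yields $\lambda_1(H_n)/\sqrt{nr} \convp 1$ and $\lambda_n(H_n)/\sqrt{nr} \convp -1$.

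When $r$ is fixed, $\|\tilde{P}_n\|_{\op}$ and $\|\delta_n W\|_{\op}$ are both of order $\sqrt{n}$, so Weyl's inequality no longer separates the spike from the bulk and the Baik--Ben Arous--P\'ech\'e mechanism enters. Here $\delta_n \to 1$ and $\sqrt{\rho_n}\, g\, n + 2\theta_n\sqrt{n}\, \zeta_0 = o_P(\sqrt{n})$, while $\theta_n\sqrt{n} \to \sqrt{r-2}$ and $\|\mathbf{z}_\perp\|/\sqrt{n} \convp 1$, so $n^{-1/2}\tilde{P}_n$ has nonzero eigenvalues converging in probability to $\pm\sqrt{r-2}$, with eigenvectors in $\mathrm{span}(\bone, \mathbf{z})$ and hence delocalised (their $\ell^\infty$ norms are $O_P(\sqrt{(\log n)/n})$). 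Conditioning on $(\mathbf{z}, g)$, so that $\delta_n W$ is an independent Wigner matrix and $\tilde{P}_n$ a fixed deformation of rank at most $2$ along a nearly orthonormal pair of delocalised directions, a standard BBP-type result for finite-rank deformations of Wigner matrices applies, with bulk-edge parameter $\delta_n \to 1$ and spike strength $\mu_n := \theta_n\|\mathbf{z}_\perp\| \convp \sqrt{r-2}$: when $\mu_n > \delta_n$ a top outlier appears at $\mu_n + \delta_n^2/\mu_n$, and otherwise the largest eigenvalue sits at the edge $2\delta_n$. For $r \ge 4$ we have $\mu_n \to \sqrt{r-2} > 1 = \lim \delta_n$, so eventually $\mu_n > \delta_n$ and $\lambda_1(H_n)/\sqrt{n} \convp \sqrt{r-2} + \frac{1}{\sqrt{r-2}}$; for $r \in \{2,3\}$ one has $\sqrt{r-2} \le 1$, the spike is asymptotically sub- or critical, and in both cases $\mu_n + \delta_n^2/\mu_n \to 2$ and $2\delta_n \to 2$, so $\lambda_1(H_n)/\sqrt{n} \convp 2$. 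The negative spike $-\sqrt{r-2}$ governs $\lambda_n(H_n)$ symmetrically. I expect this BBP step to be the main obstacle: it requires invoking a deformed-Wigner theorem under precisely the right hypotheses (random, delocalised, nearly orthonormal perturbation directions whose associated eigenvalues converge, with the full-rank diagonal nuisance term already peeled off by Weyl's inequality) together with a careful treatment of the borderline case $r = 3$; the other two regimes collapse, after the exact decomposition, to an elementary $2 \times 2$ eigenvalue computation and the crude operator-norm bound on a Wigner matrix.
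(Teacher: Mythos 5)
Your proposal is correct and follows essentially the same route as the paper: the same ANOVA-type Gaussian decomposition of $H_n$ into an independent Wigner part plus a rank-$\le 2$ perturbation (with the diagonal removed via Weyl), an explicit $2\times 2$ eigenvalue computation for the regimes $r/n \to c \in (0,1)$ and $r \to \infty$, $r/n \to 0$, and a BBP/deformed-Wigner argument for fixed $r$. The only substantive difference is in how the BBP step is made precise: the paper replaces the rank-two part by a surrogate $\tilde P_n = \sqrt{n(r-2)}(\bu_1\bu_1^\top - \bu_n\bu_n^\top)$ with \emph{deterministic} eigenvalues and invokes Theorem~2.1 of Benaych-Georges--Nadakuditi using the orthogonal invariance of the GOE and independence of the perturbation, which cleanly accommodates the random, $n$-dependent spike strength and the critical case $r=3$ (spike $=1$) that your appeal to a ``standard BBP-type result'' with delocalisation hypotheses leaves somewhat informal, though easily repaired along the same lines.
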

Noteworthy here is the phase transition at $r = 3$. The underlying reason for this is the so-called Baik-Ben Arous-P\'{e}ch\'{e} (BBP) transition for deformed Wigner matrices. This phase transition result remains valid for the original hypergraph adjacency matrix as well due to the strong universality results proved in \cite{brailovskaya2024universality}, provided that $p \gg \frac{\log^4 n}{n}$ which in terms of $d_{\avg}$ reads $d_{\avg} \gg n^{r - 2} \log^4 n$ (see the discussion in Section~\ref{sec:dep_wig}).

The behaviour of the extreme eigenvalues of the two Laplacian matrices is given in Theorem~\ref{thm:extreme_eigen_laplacian}.

\subsection{Related works on hypergraph and tensor spectra}
Spectra of hypergraphs had drawn the attention of researchers at least twenty years ago. Indeed, \cite{feng1996spectra}
studied the spectrum of $(d,r)$-regular hypergraphs and identified the LSD, which generalizes the work of \cite{mckay1981expected} for random regular graphs. \cite{lu2012loose} studied the spectra of the so-called \emph{loose Laplacians} of an Erd\H{o}s-R\'{e}nyi hypergraph. \cite{cooper2012spectra} studied the adjacency tensor of (deterministic) uniform hypergraphs. \cite{cooper2020adjacency} studied various aspects of the adjacency tensor for random $k$-uniform hypergraphs.

Friedman \cite{friedman1995second} was the first to define the ``second eigenvalue" of a multilinear form, analogous to the bilinear forms associated with the adjacency matrix of a graph, for both deterministic and random $d$-regular $r$-uniform hypergraphs, and provided explicit expressions for it. In a follow-up work, \cite{friedman1991spectra} introduced the notion of regular infinite hypertrees and established that the ``first eigenvalue'' of an infinite hypertree agrees with the ``second eigenvalue" of a random regular hypergrpah of the same degree within a logarithmic factor. On the other hand, \cite{bollalaplacian1993} attempted to define an analogue of the Laplacian for hypergraphs, but encountered various obstacles that made the generalisation difficult. It was Chung \cite{chung1993laplacian}, who paved the way by defining the Laplacian for hypergraphs in full generality using homological considerations. \cite{chang1999laplacian} further enriched the study by introducing an adjacency graph of hypergraphs, examining the spectra of the Laplacian for $d$-regular hypergraphs, and improving the lower bound for the ``spectral value'' of the Laplacian defined in \cite{chung1993laplacian}. \cite{hu2012algebraic} proposed another definition of a Laplacian tensor associated with even and uniform hypergraphs and analysed its connection with edge and vertex connectivity. \cite{cooper2012spectra} examined the eigenvalues of adjacency tensor also known as 'hypermatrix' of uniform hypergraphs and established several natural analogues of basic results in spectral graph theory. \cite{li2013z} introduced another definition for the Laplacian tensor of an even uniform hypergraph, proved a variational formula for its second smallest Z-eigenvalue, and used it to provide lower bounds for the bipartition width of the hypergraph. \cite{xie2013z} proposed a definition for the signless Laplacian tensor of an even uniform hypergraph, studied its largest and smallest H-eigenvalues and Z-eigenvalues, as well as its applications in the edge cut and the edge connectivity of the hypergraph. In \cite{xie2013zadj} they also investigated the largest and the smallest Z-eigenvalues of the adjacency tensor of a uniform hypergraph. Pearson and Zhang \cite{pearson2014spectral} studied the H-eigenvalues and the Z-eigenvalues of the adjacency tensor of a uniform hypergraph. \cite{hu2015laplacian} studied $H^{+}$-eigenvalues of normalized Laplacian tensor of uniform hypergraphs and showed that the second smallest $H^{+}$-eigenvalue is positive if and only if the hypergraph is connected. Until now, the literature has primarily focused on uniform hypergraphs. \cite{banerjee2017spectra} was the first to introduce adjacency hypermatrix, Laplacian hypermatrix and normalized Laplacian hypermatrix for general hypergraphs, and extensively studied various spectral properties of these tensors. In addition to adjacency and Laplacian tensors, researchers have also suggested different types of adjacency and Laplacian matrices associated with hypergraphs. 

In a series of papers \cite{rodri2002laplacian, rodriguez2003laplacian, rodriguez2009laplacian}, Rodr\'iguez introduced an $ n \times n$ Laplacian matrix similar to ours and studied how the spectrum of this Laplacian relates to various structural properties of hypergraphs such as the diameter, the mean distance, excess, etc., as well as its connection to the hypergraph partition problem.  \cite{banerjee2021spectrum} also studied adjacency and Laplacian matrices, which are the same as the matrices we consider up to minor scaling factors and showed how the various structural notions of connectivity, vertex chromatic number and diameter are related to the eigenvalues of these matrices. Building on the work of \cite{banerjee2021spectrum}, 
\cite{sahalapspectra2022} examined the spectral properties of the Laplacian matrix in greater detail. Specifically, they obtained bounds on the spectral radius of uniform hypergraphs in terms of some invariants of hypergraphs such as the maximum degree, the minimum degree, etc. A more general treatment is provided in \cite{Bangenoperators2023}, where the authors generalized the concepts of adjacency and Laplacian matrices for hypergraphs by introducing general adjacency operators and general Laplacian operators.

The study of Laplacian matrix for Wigner matrices was initiated in \cite{bryc2005spectral}. Later, \cite{sparselap2012} considered the Laplacian matrix for sparse Erd\H{o}s-R\'eyni random graphs. The normalized Laplacian in the non-sparse setting was studied in \cite{chimarkovmatrix2016}. For Wigner matrices with a variance profile, the spectrum  of the Laplacian matrix was analysed in \cite{LapgenerealWigner2022}.

More generally, in the random tensor front, \cite{gurau2020generalization} considered a tensor version of the Gaussian Orthogonal Ensemble (GOE). He showed that the expectation of an appropriate generalisation of the resolvent can be described by a generalised Wigner law, whose even moments are the Fuss-Catalan numbers. There has been a flurry of activity surrounding random tensors in the past few years \cite{goulart2022random, au2023spectral, bonnin2024universality,seddik2024random}. Among these, the most relevant to our setting are \cite{goulart2022random, au2023spectral, bonnin2024universality}. The papers \cite{goulart2022random, au2023spectral} considered contractions of random tensors to form random matrices. \cite{goulart2022random} considered the above-mentioned tensor version of the GOE and showed that for any contraction along a direction $\bv \in \bbS^{n - 1}$, one gets a semi-circle law as the LSD. This result also follows from the work of \cite{bonnin2024universality} who studied more general contractions. \cite{au2023spectral} obtained the same result for more general entries. They also established joint convergence of a family of contracted matrices in the sense of free probability.

The only difference between our adjacency tensor $\cA$ and a Wigner random tensor is that for us
\[
    \cA_{i_1, \ldots, i_r} = 0 \text{ if } |\{i_1, \ldots, i_r\}| < r.
\]
Modulo this difference, it is clear that one obtains our hypergraph adjacency matrix $A$ by contracting the adjacency tensor $\cA$ along the direction $\bone / \sqrt{n}$. Using the Hoffmann-Wielandt inequality it is not difficult to show that if $r$ is fixed, then zeroing out the elements $\cY_{i_1, \ldots, i_r}$ of a Wigner random tensor $\cY$ for any $(i_1, \ldots, i_r)$ such that $|\{i_1, \ldots, i_r\}| < r $ does not affect the bulk spectra of its contractions. Therefore for fixed $r$, our Theorem~\ref{thm:main} follows from the Theorem~1.5 of \cite{au2023spectral}
 (or Theorem~2 of \cite{goulart2022random} or Theorem~4 of \cite{bonnin2024universality} in the tensor GOE case).

We emphasize that unlike our setting where the order $r$ of the tensor grows with the dimension $n$, the above-mentioned existing works on random tensors or contractions thereof consider the setting of fixed $r$. To the best of our knowledge only two lines of work consider settings where $r$, the common size of the hyperedges, grow with $n$, namely in quantum spin glasses \cite{erdHos2014phase} and in the Sachdev-Ye-Kitaev (SYK) model for black holes \cite{feng2019spectrum}. Both of these models consider matrices representable as random linear combinations of deterministic matrices, with the i.i.d. random weights being indexed by the hyperedges of a complete $r$-uniform hypergraph (exactly like the representation \eqref{eq:gham_defn} of a GHAM). In both of these works, a phase transition phenomenon is observed at the phase boundary $r \asymp \sqrt{n}$: When $r \ll \sqrt{n}$, the LSD is Gaussian; when $r \gg \sqrt{n}$, the LSD is the standard semi-circle law; and when $r/\sqrt{n} \to c > 0$, a different LSD emerges.

\subsection{Related works on Hermitian matrices with dependent entries}\label{sec:dep_wig}

We will refer to Hermitian random matrices with dependent entries as \emph{dependent/correlated Wigner matrices}. Various different types of correlated Wigner matrices has been studied in the literature in great detail. Here we will recall some of these existing works and compare them to our results.

Chapter 17 of \cite{pastur2011eigenvalue} considered a class of random $n \times n$ real symmetric matrices of the form
\[
    H = H^{(0)} + n^{-\frac{1}{2}} W,
\]
where $H^{(0)}$ is a given real symmetric matrix and $W = \{W_{jk}\}_{j,k = -m}^m$, $n = 2m + 1$, is a real symmetric random matrix, the $n \times n$ central block of the double-infinite matrix $W_{\infty} = \{W_{jk}\}_{j,k = -\infty}^{\infty}$ whose entries are Gaussian random variables with
\[
    \bbE{W_{jk}} = 0, \ \ \bbE{W_{j_1 k_1} W_{j_2 k_2}} = C_{j_1k_1 ; j_2 k_2}.
\]
The authors assume that
\[
    C := \sup_{j_1,k_1 \in \bbZ} \sum_{j_2, k_2 \in \bbZ} \big|C_{j_1k_1 ; j_2 k_2}\big| < \infty.
\]
Approximate versions of conditions like these are clearly not satisfied by our model where  the sum of the pairwise absolute correlations between a given entry and all the other entries grows like $r^2$.

\cite{chakrabarty2013limiting} and \cite{chakrabarty2016random} considered correlated Gaussian Wigner models where the entries come from stationary Gaussian fields. However, this specific structural assumption makes their models incompatible with ours. (Further, they also require summability of the pairwise correlations.)

Another dependent model was considered by \cite{gotze2015limit} where the entries $(X_{ij})_{1 \le i \le j \le n}$ form a dependent random field. One of their main assumptions however is that $\bbE[X_{ij} |\cF_{ij} ] = 0$, where $\cF_{ij}$ is the $\sigma$-algebra generated by the entries other than $X_{ij}$. This implies that
\[
    \bbE [X_{ij} X_{i'j'}] = \bbE [\bbE [X_{ij} X_{i'j'} | \cF_{ij}] ] = \bbE [X_{i'j'} \bbE [X_{ij} \mid \cF_{ij}]] = 0,
\]
i.e. the matrix entries are uncorrelared. This rules out a direct application of their method to our set-up. Notably, their proof technique for showing universality, which uses an interpolation between their random matrix model and a Wigner matrix, can be modified to work in a situation where $\bbE[X_{ij} \mid \cF_{ij}] \ne 0$ (we note here that the same result can also be proved by the Lindeberg exchange argument of \cite{chatterjee2006generalization}). However, the resulting universality result turns out to be too weak to be applicable to our setting (in our notation, it requires $r$ to go to $0$).

To the best of our knowledge, bulk universality results for the most general correlated model is obtained in \cite{erdHos2019random} with appropriate decay conditions on the joint cumulants of the entries. For a Gaussian model, they require the operator norm of the covariance matrix of the entries to be $O(N^{\epsilon})$ for any $\epsilon > 0$. In our setting the operator norm is of order $r^2$ and hence their results do not apply to the setting of polynomially growing $r$.

It is also worth mentioning the remarkably general work of
\cite{brailovskaya2024universality}, who consider matrices of the form $X = Z_0 + \sum_{i= 1}^n Z_i$, where $Z_0$ is a deterministic matrix and $Z_i$'s are $d\times d$ independent self-adjoint random matrices. Under appropriate conditions on these matrices the authors of \cite{brailovskaya2024universality} prove strong universality results. Owing to the representation \eqref{eq:GHAM_repr_lin_comb}, our adjacency matrix clearly falls under this rather general model. Unfortunately, the universality results proved in \cite{brailovskaya2024universality} do not work beyond the regime $r \ll n^{1/4}$. This is still powerful enough to give us universality of the edge eigenvalues for fixed $r$ and hence of the BBP-type phase transition at $r = 3$ described earlier. Below we quickly work out this universality result, assuming that the $Y_{\ell}$'s are uniformly bounded.

Let $d_H(A, B)$ denote the Hausdorff distance between two subsets $A, B \subset \bbR$. For a symmetric matrix $A$, let $\spec(A)$ denote its spectrum. Theorem~2.6 of \cite{brailovskaya2024universality} shows that if the matrices $Z_i$ are uniformly bounded, then
\[
    \bbP(d_H(\spec(X), \spec(G)) > C \varepsilon(t)) \le d e^{-t},
\]
where $G$ is a Gaussian random matrix with the same expectation and covariance structure as $X$ and 
\[
    \varepsilon(t) = \sigma_*(X) t^{1/2} + R(X)^{1/3} \sigma(X)^{2/3} t^{2/3} + R(X) t,
\]
with
\begin{align*}
    v(X) &:= \|\Cov(X)\|_{\op}^{1/2}, \\
    \sigma(X) &:= \|\bbE[(X - \bbE X)^2]\|_{\op}^{1/2}, \\
    \sigma_*(X) &= \sup_{\|v\| = \|w\| = 1} \bbE[|\langle v, (X - \bbE X) w\rangle|]^{1/2}, \\
    R(X) &:= \bigg\| \max_{1 \le i \le n} \|Z_i\|_{\op}\bigg\|_{\infty}.
\end{align*}
Let us now calculate these parameters for $X = n^{-1/2} H_n$. First note that 
\begin{align*}
    v(X) &= \|\Cov(X)\|_{\op}^{1/2} = O([\text{maximum row sum of } \Cov(X)]^{1/2}) \\
    &= \frac{1}{\sqrt{n}} \cdot O\bigg(\bigg[\Theta(n) \cdot \frac{r}{n} + \Theta(n^2) \cdot \frac{r^2}{n^2}\bigg]^{1/2}\bigg) \\
    &= O\bigg(\frac{r}{\sqrt{n}}\bigg).
\end{align*}
Since $\bbE (H_n^2)_{ij} = \sum_{k \ne i, j} \bbE H_{n, ik} H_{n, kj} = (r - 2)$ and $\bbE (H_n^2)_{ii} = \sum_{k \ne i} \bbE H_{n, ik}^2 = (n - 1)$, we have
\[
    \bbE H_n^2 = (n - 1) I + (r - 2) (J - I) = (n - r + 1) I + (r - 2) J,
\]
where $J = \bone \bone^\top$. It follows that
\[
    \sigma(X):= \|\bbE[(X - \bbE X)^2]\|^{1/2} = \frac{1}{\sqrt{n}} \cdot \|\bbE H_n^2 \|_{\op}^{1/2} = \frac{\sqrt{n + r - 1 + n (r - 2)}}{\sqrt{n}} = \Theta(\sqrt{r}).
\]
We also have
\[
    \sigma_*(X) \le v(X) = O\bigg(\frac{r}{\sqrt{n}}\bigg).
\]
Finally, assuming that the $Y_\ell$'s are uniformly bounded, by say $K$, we have
\[
    R(X) := \frac{1}{\sqrt{n}} \cdot \bigg\| \max_{1 \le \ell \le M} \|Y_{\ell} Q_{\ell}\|_{\op}\bigg\|_{\infty} \le \frac{K r}{\sqrt{n}}.
\]
Thus
\[
    \varepsilon(t) = O\bigg( \frac{r t^{1/2}}{\sqrt{n}} + \bigg(\frac{Kr}{\sqrt{n}}\bigg)^{1/3} \cdot r^{1/3} t^{2/3} + \frac{K r t}{\sqrt{n}}\bigg).
\]
Choosing $t = 2 \log n$, yields that with probability at least $1 - \frac{1}{n}$, we have
\[
    d_H(\spec(n^{-1/2}H_n(\bY)), \spec(n^{-1/2} H_n(\bZ)) = O\bigg( \frac{r \sqrt{\log n}}{\sqrt{n}} + K^{1/3} \cdot \bigg(\frac{r^2 \log^2 n}{\sqrt{n}}\bigg)^{1/3} + \frac{K r \log n}{\sqrt{n}}\bigg).
\]
It is evident that the above upper bound is small if $r \ll \frac{n^{1/4}}{\sqrt{K} \log n}$, assuming that $K$ is constant, which is, for instance, the case for the hypergraph, where one can take $K = \max\big\{\sqrt{\frac{p}{1 - p}}, \sqrt{\frac{1 - p}{p}}\big\}$. Assuming that $p < 1/2$, the above condition for universality simplifies to $r \ll \frac{(np)^{1/4}}{\log n}$.

\subsection{Organisation of the paper.}
The rest of the paper is organised as follows: In Section~\ref{sec:main_results}, we formally describe the main results of this paper. Section~\ref{sec:proofs} contains the proofs of these results. Finally, in Appendix~\ref{sec:aux}, we collect some useful results from matrix analysis and concentration of measure which are used throughout the paper, and in Appendix~\ref{sec:more_proofs} we provide proofs of some technical results.

\section{Main results}\label{sec:main_results}
In the first three subsections, we study the bulk of the spectrum. In the last subsection, we study the edge eigenvalues.
\subsection{Replacing general entries with Gaussians}
We first employ a Lindeberg swapping argument \cite{chatterjee2005simple} to replace the independent zero mean unit variance random variables $Y_\ell$ in the definition of $H_n$ by i.i.d. standard Gaussians. Recall that $M = \binom{n}{r}$ and $N = \binom{n - 2}{r - 2}$. We will need a Pastur-type condition on the entries $(Y_{\ell})_{1\le \ell \le M}$.

\begin{assumption}[A Pastur-type condition]
\label{ass:tail_of_entries}
Suppose $(Y_{\ell})_{1 \le \ell \le M}$ are independent zero mean unit variance random variables satisfying the following condition: For every $\varepsilon > 0$,
\[
    \frac{r^4}{n^2 N}  \sum_{\ell = 1}^M \bbE[Y^2_{\ell} \ind(|Y_{\ell}| > \varepsilon K_n)] \to 0 \ \ \text{as} \ \  n \to \infty,
\]
where $K_n = \frac{\sqrt{n N}}{r^4}$.
\end{assumption}
For dealing with the Laplacian $L_{H_n}$ we need a weaker assumption.
\begin{assumption}[A (weaker) Pastur-type condition]
\label{ass:tail_of_entries_variation}
Suppose $(Y_{\ell})_{1 \le \ell \le M}$ are independent zero mean unit variance random variables satisfying the following condition: For every $\varepsilon > 0$,
\[
    \frac{r^{3/2}}{n^2 N}  \sum_{\ell = 1}^M \bbE[Y^2_{\ell} \ind(|Y_{\ell}| > \varepsilon K_n')] \to 0 \ \ \text{as} \ \  n \to \infty,
\]
where $K_n' = \frac{\sqrt{n N}}{r^{5/2}}$.
\end{assumption}
\begin{remark}
    For $r$ fixed, Assumptions ~\ref{ass:tail_of_entries} and \ref{ass:tail_of_entries_variation} are clearly equivalent. For $r = 2$, they become Pastur's condition \cite{pastur1972}. In general, Assumption~\ref{ass:tail_of_entries} is stronger than Assumption~\ref{ass:tail_of_entries_variation}. Indeed, $K_n' > K_n$ and hence
\begin{align*}
    \frac{r^{3/2}}{n^2 N} \sum_{\ell = 1}^M \bbE[|Y_{\ell}|^2 \ind(|Y_{\ell}| > \varepsilon K_n')] &\le \frac{r^{3/2}}{n^2 N} \sum_{\ell = 1}^M \bbE[|Y_{\ell}|^2 \ind(|Y_{\ell}| > \varepsilon K_n)] \\
    &\le \frac{1}{r^{5/2}} \frac{r^4}{n^2 N} \sum_{\ell = 1}^M \bbE[|Y_{\ell}|^2 \ind(|Y_{\ell}| > \varepsilon K_n)].
\end{align*}
    So if $(Y_{\ell})_{1 \le \ell \le M}$ satisfy Assumption~\ref{ass:tail_of_entries}, then they also trivially satisfy \eqref{ass:tail_of_entries_variation}.
\end{remark}
\begin{remark}
    For i.i.d. random variables $(Y_{\ell})_{1 \le \ell \le M}$ with mean $0$ and variance $1$, Assumption~\ref{ass:tail_of_entries} becomes the following tail condition:
    \begin{equation}\label{eq:tail_cond_iid}
        \bbE[Y_1^2 \ind(|Y_1| > \varepsilon K_n)] = o(r^{-2})
    \end{equation}
    for any $\varepsilon > 0$.
\end{remark}

Our first result shows the universality of the EESD in the limit, assuming that the entries satisfy Assumption~\ref{ass:tail_of_entries}, i.e. the limiting EESD, if it exists, is the same as that of a GHAM with Gaussian entries. The proof of this result is via a comparison of \emph{Stieltjes transforms}. Recall that the Stieltjes transform $S_{\mu}$ of a probability measure $\mu$ on $\bbR$ is an analytic function defined for $z \in \mathbb{C}_+ := \{z \in \bbC : \Im(z) > 0\}$ as follows:
\[
    S_\mu(z) := \int\frac{d\mu(x)}{x-z}.
\]
Suppose $\{\mu_n\}_{n \ge 1}, \mu$ are probability measures on $\mathbb{R}$ with Stieltjes transforms $\{S_{\mu_n}\}_{n \ge 1}$ and $S_\mu$, respectively. It is well known than $S_{\mu_n}\to S_\mu$ pointwise on $\bbC_+$ if and only if $\mu_n \to \mu$ weakly (see, e.g., \cite{anderson2010introduction}). With this result in mind, the universality of the limiting EESD is given by our first theorem.

\begin{theorem}[Universality of the limiting EESD]\label{thm:univ}
Suppose $r/n \to c \in [0, 1)$. Suppose $\bY = (Y_\ell)_{1 \le \ell \le M}$ satisfies Assumption~\ref{ass:tail_of_entries}. Let $\bZ = (Z_{\ell})_{1 \le \ell \le M}$ be a vector of i.i.d. standard Gaussians. Then for any $z \in \bbC_+$,
    \[
        \lim_{n \to 0} \big| S_{\mubar_{B_n(\bY)}}(z) - S_{\mubar_{B_n(\bZ)}}(z)\big| = 0
    \]
    where $B_n(\bx)$ is any one of the matrices $n^{-1/2} H_n(\bx)$, $(nr)^{-1/2}L_{H_n(\bx)}$ or $n^{-1/2}\tilde{L}_{H_n(\bx)}$. In fact, for the result with $B_n(\bx) = (nr)^{-1/2} L_{H_n(\bx)}$, we only need Assumption~\ref{ass:tail_of_entries_variation} on the entries of $\bY$.
\end{theorem}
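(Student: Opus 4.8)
The plan is to run the Lindeberg exchange argument (following \cite{chatterjee2005simple, chatterjee2006generalization}) on the entries $Y_\ell$, replacing them one at a time by i.i.d.\ standard Gaussians $Z_\ell$, while controlling the change in $S_{\mubar_{B_n(\cdot)}}(z)$ for a fixed $z = u + iv \in \bbC_+$. Fix a smooth test function coming from the Stieltjes transform: for a symmetric matrix $B$ write $F(B) := \frac{1}{n}\Tr (B - zI)^{-1}$, so that $\bbE F(B_n(\bY)) = S_{\mubar_{B_n(\bY)}}(z)$. The key analytic input is that $F$, viewed as a function of a single entry $Y_\ell$ (with all others frozen), is smooth with controlled derivatives: the resolvent identity gives $\partial_{Y_\ell} (B - zI)^{-1} = -\frac{1}{\sqrt{n N}} (B - zI)^{-1} Q_\ell (B - zI)^{-1}$ (with an extra $\frac{1}{\sqrt{r-1}}$-type factor in the modified-Laplacian case, absorbed into constants), and iterating this gives bounds on the first three $Y_\ell$-derivatives of $F$ in terms of $\|(B-zI)^{-1}\|_{\op} \le v^{-1}$, the operator norm $\|Q_\ell\|_{\op} \le r$, and traces of products. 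The crucial point, exactly as in the displayed computations in Section~\ref{sec:dep_wig}, is that each $Q_\ell$ has operator norm $\asymp r$ and rank $\asymp r$, so the relevant derivative bounds pick up powers of $r$ rather than powers of $n$; this is what produces the factor $r^4$ (resp.\ $r^{5/2}$) in the truncation threshold.

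In detail, I would first \emph{truncate}: replace $Y_\ell$ by $\hat Y_\ell := Y_\ell \ind(|Y_\ell| \le \varepsilon K_n)$ (resp.\ $\varepsilon K_n'$), recenter and renormalise, and use the Hoffman--Wielandt / rank inequalities from Appendix~\ref{sec:aux} together with Assumption~\ref{ass:tail_of_entries} (resp.\ \ref{ass:tail_of_entries_variation}) to show that this changes $\bbE F(B_n(\bY))$ by $o(1)$. The variance normalisation of the truncated variables differs from $1$ by $o(1)$ and is harmless. After truncation the entries are bounded by $\varepsilon K_n = \varepsilon \sqrt{nN}/r^4$, which is precisely the scale at which $\frac{1}{\sqrt{nN}} \cdot |Y_\ell| \cdot \|Q_\ell\|_{\op} \lesssim \varepsilon / r^3$ stays small. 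Then I would do the Lindeberg swap: enumerate $\ell = 1, \dots, M$, and at step $\ell$ compare the matrix with entries $(Z_1, \dots, Z_{\ell-1}, Y_\ell, \hat Y_{\ell+1}, \dots)$ to the one with $Y_\ell$ replaced by $Z_\ell$. Taylor-expanding $F$ to third order in the $\ell$-th slot, the zeroth, first and second order terms cancel in expectation because $Y_\ell$ and $Z_\ell$ are independent of everything else and share their first two moments; the remainder is bounded by $\sup |\partial^3_{Y_\ell} F| \cdot (\bbE|Y_\ell|^3 + \bbE|Z_\ell|^3)$, where on the truncated variables $\bbE |Y_\ell|^3 \le \varepsilon K_n \bbE Y_\ell^2 \le \varepsilon K_n$ and $\bbE|Z_\ell|^3$ is a constant.

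The heart of the estimate is bounding $\sup |\partial_{Y_\ell}^3 F|$ summed over $\ell$. Each derivative brings a factor $\frac{1}{\sqrt{nN}}Q_\ell$ sandwiched between resolvents; using $\|(B-zI)^{-1}\|_{\op} \le v^{-1}$, the trace of the resulting product of three $Q_\ell$'s and three resolvents is bounded (via $|\Tr(\cdot)| \le \|\cdot\|_{\op} \cdot \rank$, with $\rank Q_\ell \le r$) by $C(v) \, r \cdot r^3 / (nN)^{3/2}$, and after the $\frac{1}{n}$ in the definition of $F$ and summing over the $M = \binom{n}{r}$ values of $\ell$ one gets a bound of order $\frac{M}{n} \cdot \frac{r^4}{(nN)^{3/2}} \cdot \varepsilon K_n = \frac{M}{n}\cdot \frac{r^4}{(nN)^{3/2}}\cdot \varepsilon \frac{\sqrt{nN}}{r^4} = \varepsilon \cdot \frac{M}{n \cdot nN}$, and since $M \le n^2 N$ (indeed $M/N = \binom{n}{r}/\binom{n-2}{r-2} \asymp n^2$) this is $O(\varepsilon)$. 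Letting $\varepsilon \to 0$ after $n \to \infty$ finishes the proof; the modified Laplacian $\tilde L$ is handled identically since $\tilde L_{H_n}$ is also a linear combination $\sum_\ell Y_\ell (\frac{1}{r-1}\diag(Q_\ell \bone) - Q_\ell)$ of matrices of operator norm and rank $O(r)$, and the unnormalised Laplacian $L_{H_n} = \sum_\ell Y_\ell(\diag(Q_\ell\bone) - Q_\ell)$ has summand operator norm $O(r)$ as well but, crucially, the combinatorial weight works out so that only $r^{5/2}$ (not $r^4$) powers are needed, matching the weaker Assumption~\ref{ass:tail_of_entries_variation} — this improvement comes from the $\frac{1}{\sqrt{nr}}$ rather than $\frac{1}{\sqrt{nN}}$ normalisation, which I would track carefully.

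The main obstacle I anticipate is the bookkeeping in the third-derivative bound: one must verify that every term generated by differentiating a product of three resolvents thrice still has the favorable structure "trace of a product with at most $O(r)$ rank," so that the $r^4$ (resp.\ $r^{5/2}$) threshold is exactly what is needed and not, say, $r^6$. This is where the rank-$O(r)$ property of $Q_\ell$ (as opposed to merely its operator norm) must be used repeatedly, and where the precise value of the normalising constants $N$ vs.\ $nr$ and the factor $M/(nN) = O(1)$ enter.
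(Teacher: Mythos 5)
Your outline for $n^{-1/2}H_n$ (and for $n^{-1/2}\tilde L_{H_n}$) is essentially the paper's proof: the paper applies Chatterjee's Lindeberg theorem (Theorem~\ref{thm:chatterjee}) to $f(\bx)=\frac1n\Tr(B_n(\bx)-zI)^{-1}$, bounds the first three partial derivatives through resolvent identities and the norms of $Q_\ell$ (Propositions~\ref{prop:bulk_universality} and \ref{prop:bulk_universality_laplacian}), and then chooses the truncation level $K=\varepsilon K_n$ with $K_n=\sqrt{nN}/r^4$, exactly as you do. Your two deviations are harmless for the adjacency case: you truncate/recentre/renormalise explicitly and Taylor-expand by hand (this is what Chatterjee's theorem packages internally, and your recentring/renormalisation remark is what keeps the first two moments matched), and you bound traces by $\mathrm{rank}\cdot\|\cdot\|_{\op}$ rather than the paper's Cauchy--Schwarz with $\|Q_\ell\|_F$, which costs a factor $r$ in the third-derivative bound ($r^4$ versus the paper's $r^3$ per $\ell$); as your own accounting shows, the slack $M\asymp n^2N/r^2$ absorbs this, so the same threshold $K_n$ closes and Assumption~\ref{ass:tail_of_entries} suffices for these two matrices.

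The genuine gap is the final assertion of the theorem, that for $(nr)^{-1/2}L_{H_n}$ the weaker Assumption~\ref{ass:tail_of_entries_variation} is enough; your sketch asserts this rather than proving it, and the mechanism you give is off. The relevant derivative is $\partial_{x_\ell}\big[(nr)^{-1/2}L_{H_n(\bx)}\big]=\frac{1}{\sqrt{nrN}}\hat Q_\ell$ with $\hat Q_\ell=r\,\diag(\ba_\ell)-\ba_\ell\ba_\ell^\top$, so besides the extra $r^{-1/2}$ you must track $\|\hat Q_\ell\|_F^2=r^2(r-1)$ and $\|\hat Q_\ell\|_{\op}=r$, both larger than for $Q_\ell$; the exponent $5/2$ in $K_n'$ comes out of this bookkeeping (as in Proposition~\ref{prop:bulk_universality_laplacian_orig}), not from ``$\frac{1}{\sqrt{nr}}$ instead of $\frac{1}{\sqrt{nN}}$''. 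More importantly, the weakening in Assumption~\ref{ass:tail_of_entries_variation} lies mainly in the prefactor $r^{3/2}$ multiplying the tail sum, and in the paper that prefactor is the $\lambda_2$-type weight attached to the tail term \emph{inside} the swapping inequality for the Laplacian functional $\hat G_n$. In your scheme the tail instead enters through the preliminary truncation step, where Hoffman--Wielandt gives an error of order $\big[\frac{1}{n^2 rN}\sum_\ell\|\hat Q_\ell\|_F^2\,\bbE[Y_\ell^2\ind(|Y_\ell|>\varepsilon K_n')]\big]^{1/2}\asymp\big[\frac{r^{2}}{n^2N}\sum_\ell\bbE[Y_\ell^2\ind(|Y_\ell|>\varepsilon K_n')]\big]^{1/2}$, and Assumption~\ref{ass:tail_of_entries_variation} controls this sum only with the smaller weight $r^{3/2}$, so for growing $r$ your truncation error is not shown to be $o(1)$: you are short by at least $\sqrt r$. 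As written, your route therefore proves the $L_{H_n}$ statement only under the stronger Assumption~\ref{ass:tail_of_entries}; to get the claimed weaker hypothesis you must keep the truncation inside the exchange inequality and carry out the $\lambda_2,\lambda_3$ estimates for $\hat G_n$ explicitly, as the paper does in Proposition~\ref{prop:bulk_universality_laplacian_orig}, rather than inferring the exponents from the adjacency case.
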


We now look at some situations where Assumption~\ref{ass:tail_of_entries} holds.
\begin{example}
    Clearly, the tail condition in \eqref{eq:tail_cond_iid} (and hence Assumption~\ref{ass:tail_of_entries}) is satisfied by i.i.d. bounded random variables $(Y_{\ell})_{1 \le \ell \le M}$ with zero mean and unit variance. This already covers the case of dense hypergraphs, where the hyperedge inclusion probability $p$ is bounded away from zero. 
\end{example}

\begin{example}
    Suppose that $(Y_{\ell})_{1 \le \ell \le M}$ are i.i.d. zero mean unit variance random variables with $\bbE |Y_1|^{2 + \delta} \le C$ for some $\delta, C > 0$. Then Assumption~\ref{ass:tail_of_entries} is satisfied for any sequence $r_n$ satisfying $2 \le r_n < n - \lceil 3 + \frac{2}{\delta} \rceil$. As a consequence, i.i.d. standard Gaussians satisfy Assumption~\ref{ass:tail_of_entries} for $2 \le r_n \le n - 4$.

    Indeed, by H\"{o}lder's inequality, we have for conjugate exponents $p, q \ge 1$ satisfying $1/p + 1/q = 1$,
    \begin{align*}
        \bbE[Y_1^2 \ind(|Y_1| > K_n)] &\le (\bbE |Y_1|^{2p})^{1/p} \,\,\, \bbP(|Y_1| > K_n)^{1/q} \\
        &\le (\bbE |Y_1|^{2p})^{1/p} \,\,\, \bbE(|Y_1|^{2 + \delta})^{1/q} \,\,\, K_n^{-(2 + \delta)/q} \quad \text{(by Markov's inequality)}.
    \end{align*}
    For $2p = 2 + \delta$ (which gives $q = (2 + \delta) / \delta$), this gives
    \[
        \bbE[Y_1^2 \ind(|Y_1| > \varepsilon K_n)] \le \frac{\bbE |Y_1|^{2 + \delta}}{\varepsilon^\delta K_n^\delta} = \frac{\bbE |Y_1|^{2 + \delta}}{\varepsilon^\delta (nN)^{\delta/2} r^{-4\delta}}.
    \]
    The right hand side is $o(r^{-2})$ if
    \[
        \frac{r^{2 + 4 \delta}}{(nN)^{\delta}} = o(1).
    \]
    Now since
    \[
        \frac{r^{2 + 4 \delta}}{(nN)^{\delta}} = \bigg(\frac{r}{n}\bigg)^{\delta} \,\,\, \frac{r^{2 + 3 \delta}}{N^{\delta}} \le \frac{r^{2 + 3 \delta}}{N^{\delta}},
    \]
    we need
    \begin{equation}\label{eq:bin_coeff_bound}
        \binom{n}{r} \gg r^{3 + \frac{2}{\delta}}.
    \end{equation}
    We claim that this is always true. Let $a = \lceil 3 + 2 /\delta \rceil+ 1$. Then for $a \le r \le \frac{n}{2}$,
    \[
        \binom{n}{r} \ge \binom{n}{a} \ge \frac{n^{a}}{a^a} \ge C_\delta n^{\lceil 3 + 2 / \delta \rceil + 1} \gg n^{3 + 2 / \delta} \ge r^{3 + 2/\delta}.
    \]
    On the other hand, for $2 \le r < a$, \eqref{eq:bin_coeff_bound} is trivially true. 

    For $n/2 < r < n - \lceil 3 + \frac{2}{\delta} \rceil$, by symmetry of the binomial coefficients,
    \[
        \binom{n}{r} = \binom{n}{n - r} \ge \binom{n}{a} \gg n^{3 + 2/\delta},
    \]  
    as before.
\end{example}

\begin{example}
    Suppose that $(Y_{\ell})_{1 \le \ell \le M}$ are i.i.d. zero mean unit variance random variables and consider the regime where $r \sim c n$, where $c \in (0, 1)$, we have $N = \binom{n - 2}{r - 2} \sim \frac{c^2 \exp(I(c) n)}{\sqrt{c(1 - c) n}}$, where $I(x) = -x \log x - (1 - x) \log (1 - x)$ is the entropy of a $\Ber(x)$ random variable. In this regime, $K_n = \Theta(e^{C n})$ for some constant $C > 0$ (depending on $c$) and the tail condition \eqref{eq:tail_cond_iid} becomes rather mild:
    \[
        \bbE[Y_1^2 \ind(|Y_1| > \varepsilon e^{C n}) ] = o(n^{-2}) \quad \text{for any } \varepsilon > 0.
    \]
    For instance, a random variable whose density decays as fast as $\frac{1}{|x|^3 (\log |x|)^{3 + \delta}}$ as $|x| \to \infty$, for some $\delta > 0$ will satisfy this condition. Such random variables need not possess a $(2 + \eta)$-th moment for any $\eta > 0$.
\end{example}

\begin{example}[Dependence on sparsity] \label{exm:sparse_hypg}
    Consider the setting of hypergraphs, where $(Y_{\ell})_{1 \le \ell \le M}$ are i.i.d. with $Y_1 = \frac{B - p}{\sqrt{p(1 - p)}}$, where $B \sim \Ber(p)$. If $p$ is not bounded away from $0$ and $1$, then $|Y_1|$ is not uniformly bounded anymore. Instead,
    \[
        |Y_1|\le \max\bigg\{\sqrt{\frac{p}{1-p}},\sqrt{\frac{1-p}{p}}\bigg\} 
        \le \frac{1}{\sqrt{\min\{p, 1-p\}}}.
    \]
    The left hand side of \eqref{eq:tail_cond_iid} vanishes for every $\varepsilon > 0$ as long as
    \begin{equation}\label{eq:sparse_hyp_cond}
        \frac{nN}{r^8}{\min\{p,1-p\}} \to \infty.
    \end{equation}
    Assume now that $p$ is bounded away from $1$. For $d_{\avg} = \binom{n - 1}{r - 1} p$, the average degree of a vertex, we note that
    \[
        d_{\avg} = \binom{n - 1}{r - 1} p \asymp \frac{n}{r} N p \asymp r^7 \cdot \frac{nN}{r^8} \min\{p, 1 - p\}.
    \]
   The condition \eqref{eq:sparse_hyp_cond} thus becomes equivalent to the condition $d_{\avg} / r^7 \to \infty$. For comparison, in the random graph case (i.e. $r = 2$), it is well known that the semi-circle law emerges as the LSD of the adjacency matrix as long as the average degree $d_{\avg} = (n - 1) p \to \infty$. Thus in the setting of fixed $r$, we do get a semi-circle limit as long as $d_{\avg} \to \infty$.

    We note here that Assumption~\ref{ass:tail_of_entries_variation} is implied by the condition $\frac{d_{\avg}}{r^4} \to \infty$. Thus for the Laplacian matrix $(nr)^{-1/2} L_{H_n}$, we only need this weaker condition on $d_{\avg}$.
\end{example}

\subsection{LSD of the Gaussian GHAM}\label{sec:lsd_gaussian_GHAM}
In view of of the universality result of Theorem~\ref{thm:univ}, it sufficient to consider the Gaussian GHAM. Let $(\cM, d)$ be a metric space. For a real-valued function $f$ on $\cM$, define its Lipschitz seminorm by $\|f\|_{\Lip} := \sup_{x \neq y} \frac{|f(x) - f(y)|}{d(x,y)}$. A function $f$ is called $l$-Lipschitz if $\|f\|_{\Lip} \le l$. Define the class of Bounded Lipschitz functions as 
\[
    \cF_{\BL} := \{ f \in \bbR^\cM : \|f\|_{\Lip} + \|f\|_{\infty} \le 1 \}.
\]
Then the bounded Lipschitz metric on the set $\cP(\cM)$ of probability measures on $\cM$ is defined as follows:
\[
    d_{\BL}(\mu, \nu) := \sup_{f \in \cF_{\BL}} \bigg\{\bigg|\int f \,d\mu - \int f \,d\nu\bigg|\bigg\}.
\]
It is well known that $d_{\BL}$ metrises weak convergence of probability measures on $\cP(\cM
)$ (see, e.g., \cite[Chap.~11]{dudley2018real}). Below we have $\cM = \bbR$.

\begin{theorem}[Limit of the EESD in the Gaussian case]\label{thm:eesd_gaussian}
    Let $\bZ = (Z_{\ell})_{1\le \ell \le M}$ be a vector of i.i.d. Gaussians. Suppose $r/n \to c \in [0, 1)$. Then the EESD of $n^{-1/2} H_n$ converges weakly to $\nu_{\sc, (1 - c)^2}$. In fact, one has
    \[
        d_{\BL}(\mubar_{n^{-1/2} H_n(\bZ)}, \nu_{\sc, (1 - c)^2}) = O\bigg(\max\bigg\{\bigg|\frac{r}{n} - c\bigg|, \frac{1}{\sqrt{n}}\bigg\}\bigg).
    \]
\end{theorem}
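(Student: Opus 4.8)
The plan is to recognise that, in the Gaussian case, the correlated matrix $n^{-1/2}H_n(\bZ)$ is, \emph{in distribution}, a scaled Wigner matrix plus a rank-$\le 2$ perturbation plus a negligible diagonal term, and then run matrix-perturbation estimates. Since $H_n(\bZ) = N^{-1/2}\sum_\ell Z_\ell Q_\ell$ is a centered Gaussian matrix, it is determined by its covariance, which by the computation in the Introduction is: off-diagonal entries have variance $1$, covariance $\gamma_n$ when the index pairs share exactly one vertex and $\rho_n$ when they are disjoint, and the diagonal vanishes. An entrywise check shows that this is exactly the covariance of
\[
    M_n := \alpha_n W_n + \beta_n\big(\boldsymbol{\xi}\bone^\top + \bone\boldsymbol{\xi}^\top - 2\diag(\boldsymbol{\xi})\big) + \delta_n\,\eta\,(\bone\bone^\top - I),
\]
where $W_n$ is a Wigner matrix with zero diagonal and i.i.d.\ standard Gaussian off-diagonal entries, $\boldsymbol{\xi} = (\xi_i)_{i\in[n]}$ are i.i.d.\ standard Gaussians, $\eta\sim\nu_{\Gauss,1}$, all mutually independent, and
\[
    \alpha_n^2 = 1 - 2\gamma_n + \rho_n = \tfrac{(n-r)(n-r-1)}{(n-2)(n-3)},\qquad \beta_n^2 = \gamma_n - \rho_n = \tfrac{(r-2)(n-r)}{(n-2)(n-3)},\qquad \delta_n^2 = \rho_n.
\]
Since $\alpha_n^2 \ge 0$ for all $r\le n$, this is a legitimate Gaussian matrix, so $H_n(\bZ)\overset{d}{=}M_n$ and it suffices to bound $d_{\BL}(\mubar_{n^{-1/2}M_n}, \nu_{\sc,(1-c)^2})$. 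Note $\beta_n^2\asymp r/n$, so after the $n^{-1/2}$ scaling the rank-$\le 2$ term has operator norm $\asymp\sqrt r$; this is the origin of the two outlier eigenvalues (and, for fixed $r$, of the BBP transition at $r=3$).

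Next I would strip $M_n$ down to $\alpha_n W_n$ in two steps. Write $n^{-1/2}M_n = \big(\alpha_n n^{-1/2}W_n + R_n\big) + E_n$ with $R_n = \beta_n n^{-1/2}(\boldsymbol{\xi}\bone^\top + \bone\boldsymbol{\xi}^\top)$ of rank $\le 2$ and $E_n = -2\beta_n n^{-1/2}\diag(\boldsymbol{\xi}) - \delta_n\eta\, n^{-1/2} I$ diagonal-plus-scalar. First, the Hoffman--Wielandt inequality gives the standard estimate $d_{\BL}(\mu_{A+E_n},\mu_A)\le n^{-1/2}\|E_n\|_F$ for Hermitian $A$; with $A = \alpha_n n^{-1/2}W_n + R_n$ and $\bbE\|E_n\|_F^2 = O(1)$ (as $\beta_n,\delta_n = O(1)$), this contributes $O(n^{-1/2})$ to the distance between the corresponding expected measures. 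Second, for $R_n$, with $X := \alpha_n n^{-1/2}W_n$, I would use the rank bound $\|F^{X+R_n} - F^{X}\|_\infty\le 2/n$ together with operator-norm control: on the event $\{\|n^{-1/2}W_n\|_{\op}\le 3\}\cap\{\|\boldsymbol{\xi}\|^2\le 2n\}$ (probability $\ge 1 - e^{-cn}$), both $F^{X}$ and $F^{X+R_n}$ are supported in an interval of length $O(\|R_n\|_{\op}) = O(\sqrt n)$, and Lebesgue--Stieltjes integration by parts gives, uniformly over $f\in\cF_{\BL}$,
\[
    \Big| \int f\, d\mu_{X+R_n} - \int f\, d\mu_{X} \Big| = \Big| \int f'(x)\,\big(F^{X+R_n} - F^{X}\big)(x)\, dx \Big| \le \frac{2}{n}\cdot O(\sqrt n) = O(n^{-1/2}),
\]
with the complementary event costing $O(e^{-cn})$. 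Combining, $d_{\BL}(\mubar_{n^{-1/2}H_n(\bZ)}, \mubar_{\alpha_n n^{-1/2}W_n}) = O(n^{-1/2})$.

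For the last pieces: since $\alpha_n\le 1$ and $f(\alpha_n\,\cdot\,)\in\cF_{\BL}$ for $f\in\cF_{\BL}$, one has $d_{\BL}(\mubar_{\alpha_n n^{-1/2}W_n}, \nu_{\sc,\alpha_n^2})\le d_{\BL}(\mubar_{n^{-1/2}W_n}, \nu_{\sc,1}) = O(n^{-1/2})$ by the classical convergence rate of the Wigner EESD to the semicircle law (via Bai's inequality and the Stieltjes transform; any polynomial rate suffices, and the zero diagonal of $W_n$ is immaterial by another Hoffman--Wielandt estimate). Finally, as $\nu_{\sc,\sigma_2^2}$ is the pushforward of $\nu_{\sc,\sigma_1^2}$ under $x\mapsto(\sigma_2/\sigma_1)x$, coupling gives $d_{\BL}(\nu_{\sc,\alpha_n^2},\nu_{\sc,(1-c)^2})\le 2|\alpha_n-(1-c)|$; the explicit formula for $\alpha_n^2$ yields $|\alpha_n^2-(1-c)^2| = O(|r/n-c| + 1/n)$, so dividing by $\alpha_n + (1-c)\ge 1-c>0$ gives $d_{\BL}(\nu_{\sc,\alpha_n^2},\nu_{\sc,(1-c)^2}) = O(\max\{|r/n-c|,1/n\})$. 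The triangle inequality over the three estimates yields the claimed bound $O(\max\{|r/n-c|,n^{-1/2}\})$, and hence the weak-convergence statement.

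The step demanding the most care is the removal of the rank-$\le 2$ term $R_n$: since $\|R_n\|_{\op}\asymp\sqrt r$ diverges, the two extremal eigenvalues of $n^{-1/2}H_n$ escape to $\pm\infty$ (consistently with the companion result $\lambda_1(H_n)/n\convd\tfrac c2\zeta + (\tfrac{c^2}{4}\zeta^2 + c(1-c))^{1/2}$), so one cannot bound $d_{\BL}$ simply by $\rank(R_n)/n$; it is precisely the product $\big(\rank(R_n)/n\big)\times(\text{support range})=O(n^{-1/2})$, extracted by the integration-by-parts argument above, that produces the stated rate. Everything else---the exact Gaussian representation $H_n(\bZ)\overset{d}{=}M_n$ (in particular the miraculous identity $\alpha_n^2 = \binom{n-r}{2}/\binom{n-2}{2}$) and the Hoffman--Wielandt/Weyl-type perturbation bounds---is routine once that decomposition is in hand.
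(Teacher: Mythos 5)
Your plan is sound and is essentially the paper's own route: represent the Gaussian GHAM exactly as (scaled Wigner) $+$ (rank-$\le 2$ Gaussian perturbation) $+$ (diagonal/scalar correction), strip the diagonal by Hoffman--Wielandt, strip the low-rank piece by a rank-inequality argument, then rescale and invoke a quantitative GOE-to-semicircle rate; your covariance matching and the identity $\alpha_n^2=(n-r)(n-r-1)/((n-2)(n-3))$ are correct. Two remarks on the fine points. First, your removal of the low-rank part via $\|F^{X+R_n}-F^{X}\|_\infty\le \rank(R_n)/n$ combined with operator-norm control of the support and integration by parts is indeed the right way to convert the Kolmogorov--Smirnov rank bound into a $d_{\BL}$ bound here (a direct inequality of the form $d_{\BL}\le 2d_{\KS}$ does not hold for arbitrary pairs of measures, so the support-length factor you extract is doing real work); the paper takes a shortcut at this step. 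Second, for the stated rate you genuinely need an $O(n^{-1/2})$ bound for $d_{\BL}(\mubar_{n^{-1/2}W_n},\nu_{\sc,1})$, so your parenthetical ``any polynomial rate suffices'' is not accurate for the quantitative claim; in the Gaussian case such a rate (in fact $O(1/n)$ in $d_{\KS}$, as the paper uses via G\"otze--Tikhomirov) is available, so the step stands.

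There is, however, one concrete slip that must be repaired: your decomposition $n^{-1/2}M_n=(\alpha_n n^{-1/2}W_n+R_n)+E_n$ drops the rank-one piece $\delta_n\,\eta\, n^{-1/2}\bone\bone^\top$ --- it is present in $M_n$ but appears in neither $R_n$ nor $E_n$ (only its diagonal correction $-\delta_n\eta\,n^{-1/2}I$ is in $E_n$). This term is not negligible: for $r/n\to c>0$ one has $\delta_n\to c$ and its operator norm after scaling is of order $|\eta|\sqrt{n}$, i.e.\ it is a leading contribution to the outliers, so it cannot simply be omitted. The fix is immediate and keeps your argument intact: absorb it into $R_n$. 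The column space of $\beta_n(\boldsymbol{\xi}\bone^\top+\bone\boldsymbol{\xi}^\top)+\delta_n\eta\,\bone\bone^\top$ is spanned by $\bone$ and $\boldsymbol{\xi}$, so $\rank(R_n)\le 2$ still, the KS rank bound $2/n$ is unchanged, and on your good event (intersected with, say, $\{|\eta|\le \sqrt{n}\}$, or simply taking expectations using $\bbE|\eta|<\infty$) the support length is $O(\sqrt{n}(1+|\eta|))$, so the expected contribution remains $O(n^{-1/2})$. With this correction your proof is complete and coincides in substance with the paper's, which uses the same rank-$\le 2$ plus GOE representation, written there as $\alpha_n U\bone\bone^\top+\beta_n(\bV\bone^\top+\bone\bV^\top)+\theta_n Z_n$ minus its diagonal.
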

Theorem~\ref{thm:eesd_gaussian} is proved by representing the Gaussian GHAM via an ANOVA-type decomposition as a low rank perturbation of a scaled Gaussian Wigner matrix minus its diagonal. 

We will now consider the Laplacian matrix $\tilde{L}_{H_n}$. To describe the limit we need the concept of free additive convolution of measures. Let $\mu_1$ and $\mu_2$ be two probability measures on $\bbR$ with Stieltjes transforms $S_{\mu_1}(z)$ and $S_{\mu_2}(z)$ respectively. Then there exists a probability measure $\mu$ whose Stieltjes transform is the unique solution of the system
\begin{align*}
    f(z) &= S_{\mu_1}\bigg( z - \frac{\Delta_2(z)}{f(z)}\bigg), \\
    f(z) &= S_{\mu_2}\bigg( z - \frac{\Delta_1(z)}{f(z)}\bigg), \\
    f(z) &= \frac{1 - \Delta_1(z) - \Delta_2(z)}{-z},
\end{align*}
where the functions $\Delta_i(z), i = 1, 2$, are analytic for $\Im(z) \neq 0$ and satisfy
\[
    \Delta_{i}(z) \to 0 \text{ as } \Im(z) \to \infty.
\]
The measure $\mu$ is denoted by $\mu_1 \boxplus \mu_2$ and  is called the \emph{free additive convolution of $\mu_1$ and $\mu_2$.} For further details, we refer the reader to \cite{biane1997free}, \cite{pastur2000law}. 

\begin{theorem}[Limit of the EESD for Laplacian]\label{thm:eesd_gaussian_laplacian}
Let $H_n$ be a Gaussian GHAM. 
   \begin{enumerate}
       \item [(i)] Suppose that $r$ is fixed. Then EESD of $n^{-1/2}L_{H_n}$ converges weakly to $\nu_{\Gauss, r - 1} \boxplus \nu_{\sc, 1}$ and  the EESD of $n^{-1/2}\tilde{L}_{H_n}$ converges weakly to $\nu_{\Gauss,\frac{1}{r - 1}} \boxplus \nu_{\sc, 1}$.
       \item [(ii)] Suppose $r \to \infty$ such that $r/n \to c \in [0, 1)$. Then the EESD of $(nr)^{-1/2} L_{H_n}$ converges weakly to $\nu_{\Gauss, 1} \boxplus \nu_{\Gauss, c}$ and the EESD of $n^{-1/2}\tilde{L}_{H_n}$ converges weakly to $\nu_{\sc, (1 - c)^2}$.
\end{enumerate}
\end{theorem}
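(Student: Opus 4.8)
The plan is to reduce everything to the ANOVA-type decomposition already used to prove Theorem~\ref{thm:eesd_gaussian}. Recall that in the Gaussian case one can write $H_n = \sigma_n W_n^{\circ} + (\text{low rank correction})$, where $W_n^{\circ}$ is a Gaussian Wigner matrix with its diagonal removed (suitably scaled) and the correction captures the long-range correlations coming from the shared-hyperedge structure; more precisely the covariance structure $\Cov(H_{n,ij},H_{n,i'j'}) \in \{\rho_n,\gamma_n\}$ forces a rank-one-plus-permutation-type piece. The first step is to record the exact form of this decomposition for $H_n$ and then push it through the Laplacian operations $L_{H_n} = \diag(H_n \bone) - H_n$ and $\tilde L_{H_n} = \frac{1}{r-1}\diag(H_n \bone) - H_n$. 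The key observation is that $\diag(H_n \bone)$ is a diagonal matrix whose entries are (up to scaling) sums of i.i.d.\ Gaussian-like contributions, hence behaves like a diagonal matrix with approximately i.i.d.\ Gaussian entries; this is what produces the $\nu_{\Gauss,\cdot}$ factor in the free convolution, while the $-H_n$ piece contributes a semi-circular (or, in the growing-$r$ regime, Gaussian) factor, and the two become asymptotically free.

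For part (i), $r$ fixed: here $\gamma_n = \frac{r-2}{n-2} \to 0$ and $\rho_n \to 0$, so $H_n/\sqrt n$ behaves to leading order like a standard Wigner matrix, giving the $\nu_{\sc,1}$ factor. The diagonal part $\diag(H_n \bone)/\sqrt n$: each entry $(H_n \bone)_i / \sqrt n = \frac{1}{\sqrt n}\sum_{j \ne i} H_{n,ij}$ has variance $\to 1$ per term summed over $\sim n$ terms but with pairwise correlations of order $\gamma_n$, and one computes $\Var((H_n\bone)_i/\sqrt n) \to r-1$ by the same covariance bookkeeping used earlier in the excerpt (the row sum of $\bbE H_n^2$ was $n-1 + (r-2) = \Theta(r)$ style calculation); a Lindeberg/CLT argument gives that the empirical distribution of the diagonal entries converges to $\nu_{\Gauss,r-1}$, and similarly $\frac{1}{r-1}\diag(H_n\bone)/\sqrt n$ gives $\nu_{\Gauss,1/(r-1)}$. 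Asymptotic freeness of the diagonal deterministic-spectrum part and the Wigner part (a standard consequence of Wigner matrices being asymptotically free from any bounded-spectrum deterministic sequence, e.g.\ via \cite{anderson2010introduction}, after truncating the diagonal) then yields the claimed free convolutions. One must check the independence/decorrelation needed: $\diag(H_n\bone)$ and the off-diagonal bulk of $H_n$ are \emph{not} independent, but the shared randomness is spread over $\binom{n}{r}$ weights so that the effective dependence between any fixed spectral functional of one and of the other is $o(1)$; quantifying this is a routine second-moment computation.

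For part (ii), $r \to \infty$, $r/n \to c$: by Theorem~\ref{thm:eesd_gaussian} we already know $\mubar_{n^{-1/2}\tilde L_{H_n}}$ should match $\mubar_{n^{-1/2}H_n}$ up to the diagonal term $\frac{1}{r-1}\diag(H_n\bone)/\sqrt n$, whose entries now have variance $\asymp \frac{r}{r-1}\cdot\frac{1}{?}$ — here the normalisation makes this diagonal term vanish in operator norm after the $n^{-1/2}$ scaling is compared against the $\frac{1}{r-1}$ prefactor, so $n^{-1/2}\tilde L_{H_n}$ and $-n^{-1/2}H_n$ have the same LSD, namely $\nu_{\sc,(1-c)^2}$ (symmetric, so the sign is irrelevant). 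For $(nr)^{-1/2}L_{H_n}$: the bulk term $(nr)^{-1/2}H_n$ now has vanishing LSD (it is $r^{-1/2}$ times something with a semicircle LSD, so collapses to $\delta_0$)? — this cannot be right, so instead one keeps both: $(nr)^{-1/2}H_n$ contributes $\nu_{\Gauss,1}$ only if its LSD does not collapse, meaning the correct reading is that after the $(nr)^{-1/2}$ normalisation the off-diagonal part is the one contributing $\nu_{\Gauss,c}$ (the long-range correlation rank-one piece, of the right size $\asymp r$) and the diagonal $\diag(H_n\bone)$ piece, of size $\asymp \sqrt{nr}$, contributes $\nu_{\Gauss,1}$ after a CLT on its entries; asymptotic freeness again glues them. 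I would make the sizes precise via the $\rho_n,\gamma_n$ formulae and the ANOVA decomposition, then invoke the subordination characterisation of $\boxplus$ stated in the excerpt to identify the limit.

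The main obstacle I anticipate is justifying \emph{asymptotic freeness} between the diagonal part and the off-diagonal part despite their shared randomness (they are measurable functions of the same Gaussian vector $\bZ$), and getting the variance constants of the diagonal CLT exactly right in the growing-$r$ regime where $\diag(H_n\bone)$ and the rank-corrections interact; I expect to handle the first via a concentration/decoupling argument showing the cross-cumulants are $o(1)$, and the second via the explicit covariance computations $\rho_n, \gamma_n$ already in the paper.
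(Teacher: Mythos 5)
Your overall outline (use the Gaussian ANOVA representation \eqref{eq:GHAM_decomp}, treat the row-sum diagonal as the source of the Gaussian factor, and dispose of the diagonal term for $\tilde L_{H_n}$ when $r \to \infty$) is the same as the paper's, and your variance bookkeeping ($\Var((H_n\bone)_i/\sqrt n) \to r-1$, hence $\nu_{\Gauss, r-1}$ resp.\ $\nu_{\Gauss, 1/(r-1)}$) is correct. But the central difficulty of part (i) is left unresolved in your sketch: $\diag(H_n\bone)$ is exactly the vector of row sums of $H_n$, so the "diagonal part'' and the "Wigner part'' are strongly dependent, and the freeness results you invoke (Wigner asymptotically free from a deterministic or independent diagonal, as in \cite{anderson2010introduction}) simply do not apply until that dependence is removed. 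Declaring this "a routine second-moment computation''/"cross-cumulants are $o(1)$'' is not an argument — this is precisely the nontrivial step even in the $r=2$ Markov-matrix case of \cite{bryc2005spectral}. The paper handles it in two moves you do not have: first, the explicit representation $G_n = \alpha_n U\bone\bone^\top + \beta_n(\bV\bone^\top+\bone\bV^\top)+\theta_n Z_n$ splits the diagonal of row sums into a piece $\propto \diag(\bV)$ that is \emph{exactly independent} of $Z_n$ (carrying variance $\tfrac{r-2}{(r-1)^2}$) plus the residual Wigner row-sum piece $\tfrac{1}{r-1}\diag(Z_n\bone)$; second, the remaining dependence between $\diag(Z_n\bone)$ and $Z_n$ is removed by a Bryc--Dembo--Jiang-type moment-comparison lemma (Lemma~\ref{lem:BDJ_replacement}), after which the limit is identified by the free-convolution theorem for orthogonally invariant ensembles (Theorem~\ref{thm:pastur_free_conv}), not by generic Wigner-vs-deterministic freeness.

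For part (ii) your treatment of $n^{-1/2}\tilde L_{H_n}$ matches the paper in substance, except that the diagonal term does not vanish in operator norm in general (its largest entry is of order $\sqrt{\log n / r}$); the correct and sufficient statement is the normalized Frobenius/Hoffman--Wielandt bound $d_{W_1}(\mubar_{n^{-1/2}\tilde L_{H_n}},\mubar_{n^{-1/2}H_n}) = O(r^{-1/2})$ as in Lemma~\ref{lem:wass-1-bound}. Your treatment of $(nr)^{-1/2}L_{H_n}$, however, has a genuine gap: a rank-one "long-range correlation piece'' cannot contribute to a limiting spectral distribution, so "$\nu_{\Gauss,c}$ from the off-diagonal rank-one part'' is not right as stated — in the paper the whole matrix part $-G_n$ is shown negligible at scale $\sqrt{nr}$, and \emph{both} Gaussian factors come from the row-sum diagonal: the scalar matrix $\sqrt{r/n}\,U I$ (arising from the Laplacian of $\alpha_n U\bone\bone^\top$, with EESD $\to \nu_{\Gauss,c}$) and $\diag(\bV)$ (EESD $\to \nu_{\Gauss,1}$). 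Moreover these two surviving pieces are commuting diagonal matrices, so "asymptotic freeness glues them'' is not a valid mechanism; the paper instead appeals to Theorem~\ref{thm:pastur_free_conv} (Haar/orthogonal invariance) to produce the free convolution. You would need to supply both the negligibility estimate for $(nr)^{-1/2}G_n$ and a legitimate identification step of this kind before your sketch becomes a proof.
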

\begin{remark}
    It is well known \cite{bryc2005spectral, ding2010spectral} that the LSD of the Laplacian of Erd\H{o}s-R\'{e}nyi random graphs is the free additive convolution of the standard Gaussian and the standard semi-circle laws. Theorem~\ref{thm:eesd_gaussian_laplacian} generalises this result to the hypergraph setting.
\end{remark}

\subsection{Concentration of the ESD}
In this section, we study concentration of the ESDs around the EESDs. Our main result in this regard is the following.
\begin{theorem}\label{thm:lip_estimate}
~ 
\begin{enumerate}
    \item [(i)] The map $T_1 : (\bbR^M, \| \cdot \|_2) \to (\Mat_n(\bbR), \|\cdot \|_F)$ defined by $T_1(\bx) = n^{-1/2} H_n(\bx)$ is $\frac{r}{\sqrt{n}}$-Lipschitz. 
    \item [(ii)] The map $T_2 : (\bbR^M, \| \cdot \|_2) \to (\Mat_n(\bbR), \|\cdot \|_F)$ defined by $T_2(\bx) = (nr)^{-1/2}L_{H_n(\bx)}$ is $\sqrt{r}$-Lipschitz.
    \item [(iii)] The map $T_3 : (\bbR^M, \| \cdot \|_2) \to (\Mat_n(\bbR), \|\cdot \|_F)$ defined by $T_3(\bx) = n^{-1/2} \tilde{L}_{H_n(\bx)}$ is $\sqrt{\frac{r}{r - 1} + \frac{r^2}{n}}$-Lipschitz.
\end{enumerate}
\end{theorem}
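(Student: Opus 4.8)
The plan is to compute, for each map $T_i$, the Lipschitz constant directly from the linear representation $H_n(\bx) = \frac{1}{\sqrt N}\sum_{\ell=1}^M x_\ell Q_\ell$, since each $T_i$ is linear in $\bx$. For a linear map $T$ between Hilbert spaces, $\|T\|_{\Lip}$ equals its operator norm, so it suffices to bound $\|T_i(\bx)\|_F$ by $(\text{const})\cdot\|\bx\|_2$ and to note the estimates are tight (or just prove the upper bound, which is what the statement asks). The key structural fact I would use repeatedly is that the matrices $Q_\ell = \ba_\ell\ba_\ell^\top - \diag(\ba_\ell)$ have disjoint-ish supports in the following averaged sense: for a fixed pair $(i,j)$ with $i\neq j$, exactly $N = \binom{n-2}{r-2}$ of the $Q_\ell$ have a nonzero $(i,j)$ entry, and each such entry equals $1$.

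For part (i): write $T_1(\bx)_{ij} = \frac{1}{\sqrt{nN}}\sum_\ell x_\ell \ind(i,j\in e_\ell)$ for $i\neq j$, and $0$ on the diagonal. Then
\[
    \|T_1(\bx)\|_F^2 = \frac{1}{nN}\sum_{i\neq j}\Big(\sum_\ell x_\ell \ind(i,j\in e_\ell)\Big)^2 = \frac{1}{nN}\sum_{\ell,\ell'} x_\ell x_{\ell'} \sum_{i\neq j}\ind(i,j\in e_\ell)\ind(i,j\in e_{\ell'}).
\]
The inner sum is $|e_\ell\cap e_{\ell'}|(|e_\ell\cap e_{\ell'}|-1) \le r(r-1) < r^2$ and is $r(r-1)N$ worth of diagonal-in-$(\ell,\ell')$ contribution... more cleanly, bound $\|T_1(\bx)\|_F^2 \le \frac{1}{nN}\cdot \big\|\sum_\ell x_\ell \ind(\cdot\in e_\ell)\big\|^2$ and use that the Gram matrix $G_{\ell\ell'} = |e_\ell\cap e_{\ell'}|(|e_\ell\cap e_{\ell'}|-1)$ has operator norm at most its maximal row sum, which I would compute to be $r(r-1)N$ (sum over $\ell'$ of $|e_\ell\cap e_{\ell'}|(|e_\ell\cap e_{\ell'}|-1)$ counts ordered pairs $(i,j)\subset e_\ell$ times the number of $e_{\ell'}$ containing them, i.e. $r(r-1)\cdot N$). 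Hence $\|T_1(\bx)\|_F^2 \le \frac{r(r-1)N}{nN}\|\bx\|_2^2 \le \frac{r^2}{n}\|\bx\|_2^2$, giving the $\frac{r}{\sqrt n}$ constant.

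For parts (ii) and (iii): here $L_{H_n(\bx)} = \diag(H_n(\bx)\bone) - H_n(\bx)$ and $\tilde L_{H_n(\bx)} = \frac{1}{r-1}\diag(H_n(\bx)\bone) - H_n(\bx)$, so I decompose the Frobenius norm into the diagonal part and the off-diagonal part (these are orthogonal in $\Mat_n(\bbR)$). The off-diagonal part is just $-H_n(\bx)$ restricted off-diagonal, already controlled by part (i). For the diagonal part, $(H_n(\bx)\bone)_i = \frac{1}{\sqrt N}\sum_\ell x_\ell \ind(i\in e_\ell)(|e_\ell|-1) = \frac{r-1}{\sqrt N}\sum_\ell x_\ell\ind(i\in e_\ell)$, so I need $\sum_i\big(\sum_\ell x_\ell\ind(i\in e_\ell)\big)^2$, whose Gram matrix now has entries $|e_\ell\cap e_{\ell'}|$ with maximal row sum $r\binom{n-1}{r-1} = \frac{rn}{r}\cdot\frac{r}{n}\cdot\ldots$; concretely $\sum_{\ell'}|e_\ell\cap e_{\ell'}| = r\binom{n-1}{r-1} = rN\cdot\frac{n-1}{r-1}$, giving a bound involving $\frac{(r-1)^2}{N}\cdot\frac{rN(n-1)/(r-1)}{nr}$-type factors that collapse to the claimed constants; for $\tilde L$ the extra $\frac{1}{(r-1)^2}$ on the diagonal part produces the $\frac{r}{r-1}$ term while the off-diagonal part from (i) contributes the $\frac{r^2}{n}$ term (after the $(nr)$ vs $n$ normalization bookkeeping, noting $T_2$ carries the extra $1/\sqrt r$). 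I would assemble the two orthogonal pieces via $\|A\|_F^2 = \|\diag\|_F^2 + \|\text{offdiag}\|_F^2$ and add the bounds.

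The main obstacle is purely bookkeeping: getting the normalization constants ($N$, $nr$ vs $n$, and the factor $r-1$ from $(H_n\bone)_i$) to land exactly on $\sqrt r$ and $\sqrt{\frac{r}{r-1}+\frac{r^2}{n}}$ rather than something off by a bounded factor. I would organize this by first proving the clean identity $\|T_1(\bx)\|_F^2 \le \frac{r(r-1)}{n}\|\bx\|_2^2$ and the companion identity for the diagonal part, $\big\|\diag(H_n(\bx)\bone)\big\|_F^2 \le \frac{(r-1)(n-1)}{N}\cdot\frac{rN}{r-1}\cdot\frac{1}{?}\ldots$ — i.e. isolating the two Gram-matrix row-sum computations as standalone lemmas — and only then substitute into the definitions of $T_2, T_3$ and simplify. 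A secondary subtlety for $T_3$: one must check the cross term vanishes, which it does because $\diag$ and strictly-off-diagonal matrices are Frobenius-orthogonal regardless of $\bx$.
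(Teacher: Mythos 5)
Your plan is correct and the constants do land exactly where they should: the row sum $\sum_{\ell'}\big(|e_\ell\cap e_{\ell'}|^2-|e_\ell\cap e_{\ell'}|\big)=r(r-1)N$ gives $\|n^{-1/2}H_n(\bx)\|_F^2\le \frac{r(r-1)}{n}\|\bx\|_2^2$, and for the diagonal parts, using $(H_n(\bx)\bone)_i=\frac{r-1}{\sqrt N}\sum_\ell x_\ell\ind(i\in e_\ell)$ together with $\sum_{\ell'}|e_\ell\cap e_{\ell'}|=r\binom{n-1}{r-1}=N\,\frac{r(n-1)}{r-1}$, the Frobenius-orthogonal split diagonal/off-diagonal yields $\Xi$-type bound $\frac{r(n-1)}{n(r-1)}+\frac{r(r-1)}{n}\le\frac{r}{r-1}+\frac{r^2}{n}$ for $T_3$ and $(r-1)\frac{n-1}{n}+\frac{r-1}{n}=r-1\le r$ for $T_2$, so the "bookkeeping" you deferred does close. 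Your route differs from the paper's in organization, though the underlying quadratic-form idea is the same: the paper expands $\|T_i(\bx)-T_i(\bx')\|_F^2$ using explicit traces $\Tr(Q_\ell Q_{\ell'})$, $\Tr(\hat Q_\ell\hat Q_{\ell'})$, $\Tr(\tilde Q_\ell\tilde Q_{\ell'})$ (with $\hat Q_\ell=r\,\diag(\ba_\ell)-\ba_\ell\ba_\ell^\top$, $\tilde Q_\ell=2\,\diag(\ba_\ell)-\ba_\ell\ba_\ell^\top$), bounds the quadratic form levelwise in the intersection size $s$ by a double Cauchy--Schwarz (which is exactly your max-row-sum/Schur-test bound, stratified by $s$), and then evaluates $\sum_s \binom{r}{s}\binom{n-r}{r-s}(\cdots)$ via moments of a $\Hyper(n,r,r)$ variable. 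What the paper's version buys is exact closed-form Lipschitz constants $\Delta_{n,r},\Gamma_{n,r},\Xi_{n,r}$, which it then shows are sharp up to constants ($\Theta$ estimates, used in the subsequent remark); what your version buys is a shorter argument that avoids computing the Laplacian-specific traces, reuses part (i) through the diagonal/off-diagonal orthogonality, and even gives the marginally sharper constant $\sqrt{r-1}$ for $T_2$. The only points to make explicit in a write-up are that the Gram matrices $\big(|e_\ell\cap e_{\ell'}|^2-|e_\ell\cap e_{\ell'}|\big)_{\ell\ell'}$ and $\big(|e_\ell\cap e_{\ell'}|\big)_{\ell\ell'}$ are symmetric with nonnegative entries, so their operator norms are indeed bounded by the maximal row sum, and that Lipschitzness for the linear maps $T_i$ is equivalent to the bound at $\bx'=0$.
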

With the aid of Theorem~\ref{thm:lip_estimate}, one can easily obtain concentration of the ESD around the EESD via standard techniques (see, e.g., \cite{guionnet2000concentration}).

Recall that a probability measure $\nu$ on $\bbR$ satisfies a logarithmic Sobolev  inequality (LSI) with (not necessarily optimal) constant $\kappa$, if for any differentiable function $f$,
\[
    \int f^2 \log \frac{f^2}{\int f^2 \,d\nu} \,d\nu \le 2\kappa \int |f'|^2 \, d\nu.
\]
We say in this case that $\nu$ satisfies $\LSI_{\kappa}$. Via the so-called Herbst argument, a measure $\nu$ satisfying an LSI can be shown to possess sub-Gaussian tails (see, e.g., \cite{ledoux2006concentration}). Examples of probability measures satisfying an LSI include the Gaussian \cite{gross1975logarithmic} and any probability measure $\nu$ that is absolutely continuous with respect to the Lebesgue measure and satisfies the so-called Bobkov and G\"{o}tze condition \cite{bobkov1999exponential}. See, e.g., \cite{ledoux2006concentration} for more on logarithmic Sobolev inequalities in the context of concentration of measure.
\begin{corollary}(Concentration of the ESD)\label{cor:conc_esd}
~ 
Suppose each entry of $\bY = (Y_\ell)_{1 \le \ell \le M}$ satisfies $\LSI_{\kappa}$ for some $\kappa > 0$. There are universal constants $C_j, \hat{C}_j, \tilde{C}_j > 0$, $j = 1, 2$, such that for any $t > 0$,
\begin{enumerate}
    \item[(i)] $\bbP(d_{\BL}(\mu_{n^{-1/2}H_n(\bY)},\mubar_{n^{-1/2}H_n(\bY)}) > t) \le \frac{C_1}{t^{3/2}} \exp\big(- \frac{C_2 n^2 t^2}{\kappa r^2}\big)$;
    \item [(ii)] $\bbP(d_{\BL}(\mu_{(nr)^{-1/2} L_{H_n(\bY)}}, \mubar_{(nr)^{-1/2} L_{H_n(\bY)}}) > t) \le \frac{\hat{C}_1}{t^{3/2}} \exp\big(-\frac{\hat{C}_2 n t^2}{\kappa r}\big)$;
    \item [(iii)] $\bbP(d_{\BL}(\mu_{n^{-1/2}\tilde{L}_{H_n(\bY)}},\mubar_{n^{-1/2}\tilde{L}_{H_n(\bY)}}) > t) \le \frac{\tilde{C}_1}{t^{3/2}} \exp\big(- \frac{\tilde{C}_2 \min\{n, n^2/r^2\} t^2}{\kappa}\big)$.
\end{enumerate}
Suppose now that the entries of $\bY$ are uniformly bounded by $K > 0$. There exist absolute constants $C_j$, $\hat{C}_j$, $\tilde{C}_j > 0$, $j = 3, 4, 5, 6$, such that with $\delta_1(n) = \frac{C_5 K}{n}$, $\hat{\delta}_1(n) = \frac{\hat{C}_5 K}{n}$, $\tilde{\delta}_1(n) = \frac{\tilde{C}_5 K}{n}$ and $Q = C_6 K$, $\hat{Q} = \hat{C}_6 K$, $\tilde{Q} = \tilde{C}_6 K$ we have for any $t, \hat{t}, \tilde{t} > 0$ satisfying $t > (C_3 (Q + \sqrt{t}) \delta_1(n))^{2/5}$, $\hat{t} > (\hat{C}_3 (\hat{Q} + \sqrt{\hat{t}}) \hat{\delta}_1(n))^{2/5}$ and $ \tilde{t} > ( \tilde{C}_3 (\tilde{Q} + \sqrt{\tilde{t}})\tilde{\delta}_1(n))^{2/5}$ that
\begin{enumerate}
    \item[(iv)] $\bbP(d_{\BL}(\mu_{n^{-1/2}H_n(\bY)}, \mubar_{n^{-1/2}H_n(\bY)}) > t) \le \frac{C_3 (Q + \sqrt{t})}{t^{3/2}}\exp\big(- \frac{C_4 n^2}{r^2} \cdot \big(\frac{t^{5/2}}{C_3 (Q + \sqrt{t})} - \delta_1(n)\big)^2\big)$;
    \item[(v)] $\bbP(d_{\BL}(\mu_{(nr)^{-1/2} L_{H_n(\bY)}}, \mubar_{(nr)^{-1/2} L_{H_n(\bY)}}) > \hat{t}) \le \frac{\hat{C}_3 (\hat{Q} + \sqrt{\hat{t}})}{\hat{t}^{3/2}}\exp\big(- \frac{\hat{C}_4 n}{r} \cdot \big(\frac{\hat{t}^{5/2}}{\hat{C}_3 (\hat{Q} + \sqrt{\hat{t}})} - \hat{\delta}_1(n)\big)^2\big)$;
    \item[(vi)] $\bbP(d_{\BL}(\mu_{n^{-1/2}\tilde{L}_{H_n(\bY)}}, \mubar_{n^{-1/2}\tilde{L}_{H_n(\bY)}}) > \tilde{t}) \le \frac{\tilde{C}_3 (\tilde{Q} + \sqrt{\tilde{t}})}{\tilde{t}^{3/2}}\exp\big(- \tilde{C}_4 \cdot \min\{n, \frac{n^2}{r^2}\} \cdot \big(\frac{\tilde{t}^{5/2}}{\tilde{C}_3 (\tilde{Q} + \sqrt{\tilde{t}})} - \tilde{\delta}_1(n)\big)^2\big)$.
\end{enumerate}
\end{corollary}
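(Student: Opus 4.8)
The plan is to derive all six estimates from Theorem~\ref{thm:lip_estimate} by a two-step scheme: first show that, after composing with an eigenvalue-stability bound, each of the three matrix maps turns an individual bounded-Lipschitz test function into a Lipschitz function of the independent coordinates $Y_1,\dots,Y_M$; then apply a concentration inequality for Lipschitz functions of independent coordinates and pass from single test functions to the full $d_{\BL}$-metric by discretising $\cF_{\BL}$. This is exactly the route of \cite{guionnet2000concentration}. Parts (i)--(iii) and (iv)--(vi) differ only in the concentration inequality used: a log-Sobolev inequality (the Herbst argument) in the first case, and Talagrand's concentration inequality for convex Lipschitz functions on a product of bounded intervals in the second.

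\emph{Lipschitz reduction.} Fix a $1$-Lipschitz $f:\bbR\to\bbR$. By the Hoffman--Wielandt inequality $\sum_i(\lambda_i(X)-\lambda_i(X'))^2\le\|X-X'\|_F^2$ for Hermitian $X,X'$, so by Cauchy--Schwarz
\[
    \Big|\tfrac1n{\textstyle\sum_i}f(\lambda_i(X))-\tfrac1n{\textstyle\sum_i}f(\lambda_i(X'))\Big|\le\tfrac1n{\textstyle\sum_i}|\lambda_i(X)-\lambda_i(X')|\le\tfrac1{\sqrt n}\|X-X'\|_F.
\]
Composing with Theorem~\ref{thm:lip_estimate}, the function $\Phi_f(\bx):=\int f\,d\mu_{B_n(\bx)}$ with $B_n\in\{T_1,T_2,T_3\}$ is Lipschitz on $(\bbR^M,\|\cdot\|_2)$ with constant $\alpha_1:=r/n$, $\alpha_2:=\sqrt{r/n}$ and $\alpha_3\asymp(\min\{n,n^2/r^2\})^{-1/2}$ respectively — that is, the Lipschitz constant of $T_j$ divided by $\sqrt n$. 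As only $\|f\|_{\Lip}$ enters this bound and $f\mapsto\Phi_f$ is linear, for any fixed measure $\nu$ the map $\bx\mapsto d_{\BL}(\mu_{B_n(\bx)},\nu)$ — a supremum of $\alpha_j$-Lipschitz functions — is again $\alpha_j$-Lipschitz.

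\emph{Parts (i)--(iii).} The coordinates of $\bY$ are independent and each satisfies $\LSI_\kappa$, so by tensorisation the law of $\bY$ on $\bbR^M$ satisfies $\LSI_\kappa$, and the Herbst argument gives $\bbP(|F(\bY)-\bbE F(\bY)|>s)\le2\exp(-s^2/(2\kappa\alpha^2))$ for every $\alpha$-Lipschitz $F$. Applying this to $\Phi_f$ for $f$ running over a $\delta$-net (in the sup norm over a spectral window $[-R,R]$) of $\cF_{\BL}$, together with a union bound and an a priori (soft) bound on the location of the extreme eigenvalues controlling the contribution outside $[-R,R]$, and optimising over $\delta$, yields a bound of the form $C_1t^{-3/2}\exp(-C_2t^2/(\kappa\alpha_j^2))$ for $d_{\BL}(\mu_{B_n(\bY)},\mubar_{B_n(\bY)})$; since $\alpha_1^{-2}=n^2/r^2$, $\alpha_2^{-2}=n/r$ and $\alpha_3^{-2}\asymp\min\{n,n^2/r^2\}$ this is precisely (i)--(iii), the polynomial prefactor $t^{-3/2}$ arising from the discretisation. (Equivalently: $\bx\mapsto d_{\BL}(\mu_{B_n(\bx)},\mubar_{B_n(\bY)})$ is $\alpha_j$-Lipschitz, Herbst controls its fluctuations about its mean, and the same net estimate bounds $\bbE\,d_{\BL}(\mu_{B_n(\bY)},\mubar_{B_n(\bY)})$.)

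\emph{Parts (iv)--(vi) and the main obstacle.} A law supported in $[-K,K]$, such as the (discrete) Bernoulli-type law of the hypergraph model, need not satisfy an LSI, so Herbst is unavailable. Instead one restricts to \emph{convex} $1$-Lipschitz test functions $f$: for such $f$, $X\mapsto\Tr f(X)$ is convex on Hermitian matrices and, since $\bx\mapsto B_n(\bx)$ is affine, $\Phi_f$ is a convex, $\alpha_j$-Lipschitz function on the cube $[-K,K]^M$, so Talagrand's convex concentration inequality gives the dimension-free bound $\bbP(|\Phi_f(\bY)-\bbE\Phi_f(\bY)|>s)\le4\exp(-cs^2/(K^2\alpha_j^2))$, producing the exponents $n^2/r^2$, $n/r$ and $\min\{n,n^2/r^2\}$ in (iv)--(vi). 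A general $f\in\cF_{\BL}$ is then handled by writing it, on the (with high probability) bounded spectral window, as a bounded combination of convex $1$-Lipschitz pieces (e.g.\ functions $x\mapsto|x-x_k|$ on a $\delta$-grid plus an affine part), union-bounding over the pieces, and absorbing the approximation error; quantifying this step is what forces the scale $Q=C_6K$, the correction $\delta_1(n)=C_5K/n$ and the modified $t$-dependence $t^{5/2}/(Q+\sqrt t)$. I expect this last point — converting single-test-function concentration into control of the full $d_{\BL}$-distance with explicit prefactors, and in the bounded case coping with the restriction to convex functions — to be the main technical obstacle; the rest (the Lipschitz reduction, tensorisation, Herbst, Talagrand) is routine. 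Tracking constants through this scheme gives the stated universal $C_j,\hat C_j,\tilde C_j$.
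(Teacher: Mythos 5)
Your proposal is correct and follows essentially the same route as the paper: the Lipschitz estimates of Theorem~\ref{thm:lip_estimate} combined with Hoffman--Wielandt give per-test-function concentration (via tensorised LSI and Herbst for (i)--(iii), and Talagrand's convex-Lipschitz inequality for (iv)--(vi), which are the paper's Lemmas~\ref{lem:lip_lsi} and \ref{lem:lip_bounded_support}), and the passage to the full $d_{\BL}$-distance with the $t^{-3/2}$ prefactor and the $\delta_1(n)$, $Q$ corrections is exactly the covering argument of \cite{guionnet2000concentration} that the paper invokes.
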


\begin{remark}
    We note that the concentration inequalities of Corollary~\ref{cor:conc_esd} are only effective in the regime $r/n \to 0$. As such we are able to show in-probability convergence of the ESDs only in this regime.
\end{remark}

\begin{corollary}\label{cor:conv_esd}
    Suppose that the entries of $\bY = (Y_\ell)_{1 \le \ell \le M}$ satisfy Assumption~\ref{ass:tail_of_entries}. Further suppose that they are either uniformly bounded or each of them satisfy $\LSI_{\kappa}$ for some $\kappa > 0$.
    \begin{enumerate}
        \item [(i)] If $r/n \to 0$, then
            \[
                d_{\BL}(\mu_{n^{-1/2} H_n(\bY)}, \nu_{\sc, 1}) \xrightarrow{p} 0,
            \]
            and if $\frac{r \sqrt{\log n}}{n} \to 0$, then
            \[
                d_{\BL}(\mu_{n^{-1/2} H_n(\bY)}, \nu_{\sc, 1}) \xrightarrow{\text{a.s.}} 0.
            \]
        \item [(ii)] For the Laplacian $L_{H_n}$, we have the following:
            For fixed $r$,
            \[
                d_{\BL}(\mu_{n^{-1/2} L_{H_n(\bY)}}, \nu_{\Gauss, r - 1} \boxplus \nu_{\sc, 1}) \xrightarrow{\text{a.s.}} 0.
            \]
            If $r \to \infty$ and $\frac{r}{n} \to 0$, then
            \[
                d_{\BL}(\mu_{(nr)^{-1/2} L_{H_n(\bY)}}, \nu_{\Gauss, 1}) \xrightarrow{p} 0,
            \]
            and if $r \to \infty$ and $\frac{r \log n}{n} \to 0$, then
            \[
                d_{\BL}(\mu_{(nr)^{-1/2} L_{H_n(\bY)}}, \nu_{\Gauss, 1}) \xrightarrow{\text{a.s}} 0.
            \]
        \item [(iii)] For the Laplacian $\tilde{L}_{H_n}$, we have the following:
            For fixed $r$,
            \[
                d_{\BL}(\mu_{n^{-1/2} \tilde{L}_{H_n(\bY)}}, \nu_{\Gauss,\frac{1}{r - 1}} \boxplus \nu_{\sc, 1}) \xrightarrow{\text{a.s.}} 0.
            \]
            If $r \to \infty$ and $\frac{r}{n} \to 0$, then
            \[
                d_{\BL}(\mu_{n^{-1/2} \tilde{L}_{H_n(\bY)}}, \nu_{\sc, 1}) \xrightarrow{p} 0,
            \]
            and if $r \to \infty$ and $\frac{r\sqrt{\log n}}{n} \to 0$, then 
            \[
                d_{\BL}(\mu_{n^{-1/2} \tilde{L}_{H_n(\bY)}}, \nu_{\sc, 1}) \xrightarrow{\text{a.s.}} 0.
            \]
    \end{enumerate}
\end{corollary}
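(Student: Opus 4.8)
The plan is to obtain the corollary by combining, through the triangle inequality for the bounded-Lipschitz metric $d_{\BL}$, the three pillars already established above: universality of the limiting EESD (Theorem~\ref{thm:univ}), identification of the limit in the Gaussian case (Theorems~\ref{thm:eesd_gaussian} and \ref{thm:eesd_gaussian_laplacian}), and concentration of the ESD about the EESD (Corollary~\ref{cor:conc_esd}). For each matrix $B_n = B_n(\bY)$ appearing in the statement, write $\mu_\infty$ for its claimed limit and split
\[
    d_{\BL}(\mu_{B_n}, \mu_\infty) \le d_{\BL}(\mu_{B_n}, \mubar_{B_n}) + d_{\BL}(\mubar_{B_n}, \mu_\infty).
\]

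First I would dispose of the deterministic term $d_{\BL}(\mubar_{B_n}, \mu_\infty)$. Since weak convergence on $\bbR$ is equivalent to pointwise convergence of Stieltjes transforms on $\bbC_+$, Theorem~\ref{thm:univ} lets me replace $\bY$ by i.i.d. standard Gaussians under Assumption~\ref{ass:tail_of_entries} (or the weaker Assumption~\ref{ass:tail_of_entries_variation} in the case of $L_{H_n}$), after which Theorems~\ref{thm:eesd_gaussian} and \ref{thm:eesd_gaussian_laplacian} supply the limit. The only bookkeeping is to insert the correct value of $c$: in every regime with $r/n \to 0$ one has $c = 0$, so $\nu_{\sc,(1-c)^2} = \nu_{\sc,1}$, while $\nu_{\Gauss,c}$ degenerates to $\delta_0$ and $\nu_{\Gauss,1} \boxplus \delta_0 = \nu_{\Gauss,1}$, matching the stated limits; for the fixed-$r$ part of (ii), $\mu_{n^{-1/2}L_{H_n}}$ is the image of $\mu_{(nr)^{-1/2}L_{H_n}}$ under the fixed rescaling $x \mapsto \sqrt{r}\,x$, so universality and concentration for $(nr)^{-1/2}L_{H_n}$ transfer directly and the EESD limit is read from Theorem~\ref{thm:eesd_gaussian_laplacian}(i). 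This shows $d_{\BL}(\mubar_{B_n}, \mu_\infty) \to 0$ in all cases.

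Next I would control the random term $d_{\BL}(\mu_{B_n}, \mubar_{B_n})$ via Corollary~\ref{cor:conc_esd}. Fixing $t > 0$, every bound there is of the form $(\text{const}/t^{3/2})\,e^{-c_t \rho_n}$ with $c_t > 0$ depending only on $t$ and $\kappa$, and rate $\rho_n = n^2/r^2$ for $n^{-1/2}H_n$, $\rho_n = n/r$ for $(nr)^{-1/2}L_{H_n}$, and $\rho_n = \min\{n, n^2/r^2\}$ for $n^{-1/2}\tilde L_{H_n}$; in the uniformly bounded case the same shape holds for $n$ large, since $\delta_1(n), \hat\delta_1(n), \tilde\delta_1(n) \to 0$ makes both the side constraint on $t$ and the exponent stabilise. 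As $r/n \to 0$ (or $r$ fixed) forces $\rho_n \to \infty$, we get $\bbP(d_{\BL}(\mu_{B_n}, \mubar_{B_n}) > t) \to 0$, and the triangle inequality then yields $d_{\BL}(\mu_{B_n}, \mu_\infty) \convp 0$, the in-probability half of each of (i)--(iii). For the almost sure statements I would apply the Borel--Cantelli lemma: under the sharper hypotheses ($r\sqrt{\log n}/n \to 0$ for $n^{-1/2}H_n$ and $n^{-1/2}\tilde L_{H_n}$, $r\log n/n \to 0$ for $(nr)^{-1/2}L_{H_n}$, trivially for fixed $r$) the relevant $\rho_n$ exceeds $2\log n$ eventually, so $\sum_n \bbP(d_{\BL}(\mu_{B_n}, \mubar_{B_n}) > t) < \infty$ for each fixed $t$; hence a.s. $\limsup_n d_{\BL}(\mu_{B_n}, \mubar_{B_n}) \le t$, and intersecting over $t \in \{1/k : k \in \bbN\}$ gives a.s. convergence of $d_{\BL}(\mu_{B_n}, \mubar_{B_n})$ to $0$. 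Together with the deterministic limit this completes all the almost sure claims.

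I do not foresee a real obstacle: the corollary is essentially an assembly of the earlier results, and the only care needed is the bookkeeping of the decay rates $\rho_n$ against the regime hypotheses and the verification that, in the uniformly bounded case of Corollary~\ref{cor:conc_esd}, the correction terms $\delta_1(n)$ (and their hatted/tilded analogues) are asymptotically negligible, so that the effective rate coincides with the one in the log-Sobolev case.
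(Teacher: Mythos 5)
Your proposal is correct and follows exactly the route the paper intends (and leaves implicit): the corollary is assembled from Theorem~\ref{thm:univ}, Theorems~\ref{thm:eesd_gaussian} and \ref{thm:eesd_gaussian_laplacian}, and Corollary~\ref{cor:conc_esd} via the triangle inequality, with Borel--Cantelli giving the almost sure statements under the strengthened rate hypotheses. Your bookkeeping of the rates $n^2/r^2$, $n/r$, $\min\{n, n^2/r^2\}$ against the regimes, the degenerate cases $c=0$, and the fixed-$r$ rescaling between $(nr)^{-1/2}L_{H_n}$ and $n^{-1/2}L_{H_n}$ all check out.
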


\begin{remark}
Recall that a $d$-dimensional random vector $\bX$ satisfies a Poincar\'{e} inequality with constant $\sigma^2$ if for any bounded smooth function $g$ on $\bbR^d$ one has
\[
    \Var(g(\bX)) \le \sigma^2 \bbE \|\nabla g(\bX)\|_2^2,
\]
We say that $\bX$ satisfies $\PI(\sigma^2)$. Suppose each of the entries $Y_{\ell}$ satisfies $\PI(\sigma^2)$. Combining Theorem~\ref{thm:lip_estimate} with Lemma~7.1 of \cite{bobkov2010concentration}, it follows that the $n$-dimensional vector of the eigenvalues of $n^{-1/2}H_n(\bY)$ satisfies $\PI(\sigma^2 r^2 / n)$. Let $d_{\KS}(\mu, \nu) := \sup_{x} |F_\mu(x) - F_{\nu}(x)|$ denote the Kolmogorov-Smirnov distance between two probability measures $\mu$ and $\nu$ on $\bbR$, where $F_{\mu}(x) = \mu((-\infty, x])$ is the distribution function of $\mu$. As a consequence of Corollary~6.2 of \cite{bobkov2010concentration}, we have
\[
    \bbE [d_{\KS}(\mu_{n^{-1/2}H_n(\bY)}, \mubar_{n^{-1/2} H_n(\bY)})] \le C \bigg(\frac{\sigma r}{n}\bigg)^{2/3} \log^2\bigg(\frac{n}{\sigma r}\bigg)
\]
for some constant $C > 0$. Unfortunately, $C$ depends on the Lipschitz constant of $F_{\mubar_{n^{-1/2}H_n(\bY)}}$. 
\end{remark}

\subsection{Asymptotics for the edge eigenvalues}\label{sec:edge}
Using the low rank representation of a Gaussian GHAM, we study its spectral edge.  Our results generalise the results of \cite{ding2010spectral} for Erd\H{o}s-R\'{e}nyi random graphs.

For any $n \times n$ symmetric matrix $B$, let $\lambda_1(B) \ge \lambda_2(B) \ge \cdots \ge \lambda_n(B)$ denote its eigenvalues in non-increasing order. We first present our results on the extreme eigenvalues of a Gaussian GHAM. 
\begin{theorem}\label{thm:extreme_eigen}
     Let $H_n$ be a Gaussian GHAM. Let $c_n := r/n$ and suppose that $\limsup c_n <1$. Let $\zeta$ denote a standard Gaussian variable.
     \begin{enumerate}
     \item [(i)]
     If $c_n \to c\in (0,1)$, then
    \begin{align}
        &\frac{\lambda_1(H_n)}{n} \xrightarrow{d} \frac{c}{2}\zeta + \sqrt{\frac{c^2}{4}\zeta^2 + c(1-c)}\label{eq:gham_extreme_1},\\ 
        &\frac{\lambda_n(H_n)}{n} \xrightarrow{d} \frac{c}{2}\zeta -\sqrt{\frac{c^2}{4}\zeta^2 +c(1-c)}. \label{eq:gham_extreme_2}
        \end{align}
    \item [(ii)]
    If $r \to \infty$ and $c_n\to 0$, then
    \begin{align}
        &\frac{\lambda_1(H_n)}{\sqrt{nr}} \xrightarrow{d} 1, \\
        &\frac{\lambda_n(H_n)}{\sqrt{nr}} \xrightarrow{d} -1.
    \end{align}
    \item [(iii)] If $r$ is fixed, then
    \begin{align}
        \frac{\lambda_1(H_n)}{\sqrt{n}} &\convp
        \begin{cases}
            \sqrt{r-2} + \frac{1}{\sqrt {r-2}} & \text{ if } r \geq 4, \\
            2 &\text{ if } r \leq 3,
        \end{cases} \\
        \frac{\lambda_n(H_n)}{\sqrt{n}}&\convp
        \begin{cases}
            -\sqrt{r-2} - \frac{1}{\sqrt {r-2}} & \text{ if } r \geq 4, \\
            -2 & \text{ if } r \leq 3.
        \end{cases}
    \end{align}
    \item [(iv)]
    Let $k \in \bbN$. Then, 
    \begin{align}
        &\sqrt{\frac{n}{\log n}} \bigg(\frac{\lambda_{1+k}(H_n)}{\sqrt n} - 2(1-c_n)\bigg) = O_P(1), \label{eq:gham_extreme_3}\\ 
        &\sqrt{\frac{n}{\log n}} \bigg(\frac{\lambda_{n-k}(H_n)}{\sqrt n} + 2(1-c_n)\bigg) = O_P(1).  \label{eq:gham_extreme_4}
    \end{align}
    \end{enumerate}
\end{theorem}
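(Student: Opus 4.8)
The plan is to reduce everything to an exact distributional decomposition of a Gaussian GHAM into a scaled Wigner matrix, plus a rank-$\le 2$ perturbation, plus a diagonal remainder. Since $H_n$ is a centred symmetric jointly Gaussian matrix with zero diagonal and off-diagonal covariances $\Var(H_{n,ij}) = 1$, $\Cov(H_{n,ij},H_{n,ik}) = \gamma_n = \frac{r-2}{n-2}$ and $\Cov(H_{n,ij},H_{n,kl}) = \rho_n = \frac{(r-2)(r-3)}{(n-2)(n-3)}$ for distinct indices, matching first and second moments shows that $H_n \overset{d}{=} \alpha_n \bW + \Delta_n + P_n$, where $\bW$ is a GOE matrix (i.i.d.\ $N(0,1)$ off the diagonal), $P_n := \beta_n(\bg\bone^\top + \bone\bg^\top) + \delta_n g_0 \bone\bone^\top$, and $\Delta_n := -\alpha_n\diag(\bW) - 2\beta_n\diag(\bg) - \delta_n g_0 I$; here $\bg=(g_i)_{i\le n}$ and $g_0$ are independent standard Gaussians, independent of $\bW$, and $\alpha_n^2 = 1 - 2\gamma_n + \rho_n$, $\beta_n^2 = \gamma_n - \rho_n$, $\delta_n^2 = \rho_n$ are nonnegative and bounded by $1$. (This is the ``ANOVA-type'' decomposition alluded to above; equality in law holds because both sides are centred Gaussian with the same covariance function.) One has $\alpha_n \to 1-c$, $\delta_n \to c$, $\beta_n \to \sqrt{c(1-c)}$, $\beta_n^2 n \to r-2$ when $r$ is fixed while $\beta_n \sim \sqrt{c_n}$ when $r\to\infty,\ c_n\to 0$, and $\alpha_n^2 = (1-c_n)^2 + O(n^{-1})$ uniformly (using $\limsup c_n<1$); also $\|\Delta_n\|_{\op} = O_P(\sqrt{\log n})$. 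Since $P_n$ is supported on $\operatorname{span}\{\bone,\bg\}$, in the orthonormal basis $\{\bone/\sqrt n,\ \tilde\bg/\|\tilde\bg\|\}$ with $\tilde\bg := \bg - \bar g\bone$, $\bar g := n^{-1}\sum_i g_i$, its nonzero eigenvalues are those of
\[
    M_n := \begin{pmatrix} n(2\beta_n\bar g + \delta_n g_0) & \beta_n\sqrt n\,\|\tilde\bg\| \\ \beta_n\sqrt n\,\|\tilde\bg\| & 0 \end{pmatrix},
\]
whose determinant is $\le 0$, so it has at most one positive and at most one negative eigenvalue.

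For part~(i) divide by $n$. As $\alpha_n\|\bW\|_{\op} = O_P(\sqrt n)$ and $\|\Delta_n\|_{\op} = O_P(\sqrt{\log n})$, Weyl's inequality gives $n^{-1}|\lambda_j(H_n)-\lambda_j(P_n)|\convp 0$ for each fixed $j$. Now $n^{-1}M_n$ converges a.s.\ to $\bigl(\begin{smallmatrix}c\zeta & \sqrt{c(1-c)}\\ \sqrt{c(1-c)} & 0\end{smallmatrix}\bigr)$ (set $\zeta := g_0$ and use $\bar g\to 0$, $\|\tilde\bg\|^2/n\to 1$), whose eigenvalues are $\tfrac c2\zeta \pm \sqrt{\tfrac{c^2}4\zeta^2 + c(1-c)}$ — one positive, one negative. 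The remaining $n-2$ eigenvalues of $P_n$ vanish, so $n^{-1}\lambda_1(P_n)$ and $n^{-1}\lambda_n(P_n)$ converge a.s.\ to these two roots, and transferring through the decomposition gives \eqref{eq:gham_extreme_1}--\eqref{eq:gham_extreme_2}. Part~(ii) is the same with normalisation $\sqrt{nr}$: $\alpha_n\|\bW\|_{\op}/\sqrt{nr}=O_P(r^{-1/2})\to 0$, $\|\Delta_n\|_{\op}/\sqrt{nr}\to 0$, and $(nr)^{-1/2}M_n \to \bigl(\begin{smallmatrix}0&1\\1&0\end{smallmatrix}\bigr)$ (its $(1,1)$ entry is $O_P(\sqrt{r/n})\to 0$ and $\beta_n\|\tilde\bg\|/\sqrt r \to 1$), with eigenvalues $\pm 1$.

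For part~(iii) divide by $\sqrt n$: here $\beta_n^2 n\to r-2$ and $n(2\beta_n\bar g + \delta_n g_0) = o_P(\sqrt n)$, so $n^{-1/2}M_n \to \bigl(\begin{smallmatrix}0&\sqrt{r-2}\\ \sqrt{r-2}&0\end{smallmatrix}\bigr)$; thus $n^{-1/2}P_n$ is a rank-$\le 2$ deformation whose two nonzero eigenvalues tend to $\pm\sqrt{r-2}$ along unit directions in $\operatorname{span}\{\bone,\bg\}$ that are independent of $\bW$. Using $\alpha_n\to 1$, Weyl's inequality lets one replace $\alpha_n\bW + \Delta_n$ by $\bW$ and the spike strengths by $\pm\sqrt{r-2}$ at the cost of an $o_P(\sqrt n)$ error; conditioning on $(\bg,g_0)$ and using orthogonal invariance of the GOE reduces the problem to the classical Baik--Ben Arous--P\'ech\'e transition for a rank-two deformation of $\nu_{\sc,1}$: the largest eigenvalue converges to $\sqrt{r-2}+\frac1{\sqrt{r-2}}$ when $\sqrt{r-2}>1$ (i.e.\ $r\ge 4$) and to $2$ when $\sqrt{r-2}\le 1$ (i.e.\ $r\le 3$; the value $r=3$ is critical and is handled by the BBP result in the critical window), and symmetrically for $\lambda_n(H_n)$. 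For part~(iv), fix $k\ge 1$; writing $P_n = P_n^+ + P_n^-$ with $P_n^{\pm}$ of rank $\le 1$ and $\pm P_n^{\pm}\succeq 0$ and applying Weyl monotonicity for rank-one perturbations gives $\lambda_{k+2}(\alpha_n\bW+\Delta_n) \le \lambda_{1+k}(H_n) \le \lambda_{k}(\alpha_n\bW+\Delta_n)$. Both bounding quantities equal $\lambda_j(\alpha_n\bW) + O_P(\|\Delta_n\|_{\op})$ for the fixed indices $j\in\{k,k+2\}$, and edge rigidity for the GOE ($\lambda_j(\bW)=2\sqrt n + O_P(n^{-1/6})$) together with $\alpha_n^2 = (1-c_n)^2+O(n^{-1})$ gives $\lambda_j(\alpha_n\bW) = 2(1-c_n)\sqrt n + O_P(n^{-1/6})$. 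Hence $\lambda_{1+k}(H_n) = 2(1-c_n)\sqrt n + O_P(\sqrt{\log n})$, which is \eqref{eq:gham_extreme_3}, and \eqref{eq:gham_extreme_4} is identical.

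The main obstacle will be the Baik--Ben Arous--P\'ech\'e step in part~(iii): one has to confirm that the random, $\bW$-independent eigendirections of $P_n$ are generic enough for the deformed-Wigner transition to apply, control the fact that the spike strengths and the base variance $\alpha_n^2$ only converge to (rather than equal) their limits, and handle the borderline case $r=3$, where the spike sits exactly at the critical threshold. The decomposition itself is the key structural input but is a routine Gaussian covariance computation; everything downstream is bookkeeping with Weyl's inequality and standard facts about the GOE spectral edge (Bai--Yin, Tracy--Widom).
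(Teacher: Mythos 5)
Your proposal is correct and follows essentially the same route as the paper: the same covariance-matching Gaussian decomposition into a scaled GOE plus a rank-$\le 2$ spike plus a diagonal remainder, Weyl reductions and the exact $2\times 2$ computation for parts (i)--(ii), a low-rank-deformation (BBP-type) theorem for fixed $r$, and interlacing together with Tracy--Widom edge asymptotics for part (iv). The only remark is that the $r=3$ borderline needs no critical-window analysis: the first-order deformation theorem of Benaych-Georges and Nadakuditi, applied after conditioning on the spike directions and invoking orthogonal invariance exactly as you suggest (and as the paper does), already gives the limit $2$ at spike strength $\theta=1$.
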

As mentioned in Section~\ref{sec:dep_wig}, these results remain valid for a GHAM where the entries $Y_{\ell}$ are uniformly bounded, by say $K$, and $r \ll \frac{n^{1/4}}{\sqrt{K} \log n}$.

Now we present our results on the extreme eigenvalues of the two types of Laplacians of a Gaussian GHAM.
\begin{theorem}\label{thm:extreme_eigen_laplacian}
    Let $H_n$ be a Gaussian GHAM. Let $c_n := r/n$ and suppose that $\limsup c_n < 1$. Let $k \in \bbN$ and $\zeta$ be a standard Gaussian variable.
    \begin{enumerate}
    \item [(A)] We have the following estimates for the extreme eigenvalues of $L_{H_n}$:
    \begin{align} 
        \sqrt{ \log n}\bigg( \frac{\lambda_k(L_{H_n})}{n\sqrt { 2 \log n}} -\sqrt{c_n(1-c_n)} \bigg) &=O_P(1), \label{eq:gham_extreme_5}\\
        \sqrt{ \log n}\bigg( \frac{\lambda_{n+1-k}(L_{H_n})}{n\sqrt { 2 \log n}} +\sqrt{c_n(1-c_n)} \bigg) &=O_P(1). \label{eq:gham_extreme_6}
    \end{align}
    \item [(B)] We have the following estimates for the largest and smallest eigenvalues of $\tilde L_{H_n}$. 
    \vskip5pt
    \begin{enumerate}
    \item [(i)]
    If $r \ll \sqrt{\log n}$, then
    \begin{align}
        \frac{\sqrt {\log n}}{r}\bigg(\frac{(r-1) \lambda_1(\tilde L_{H_n})}{n\sqrt{2 \log n}} -\sqrt{c_n(1-c_n)}\bigg) &= O_P(1), \label{eq:gham_extreme_7}\\
        \frac{\sqrt {\log n}}{r}\bigg(\frac{(r-1) \lambda_n(\tilde L_{H_n})}{n\sqrt{2 \log n}} +\sqrt{c_n(1-c_n)}\bigg) &= O_P(1). \label{eq:gham_extreme_8}
    \end{align}
    \item [(ii)]
    If $c_n \to c \in (0,1)$, then
    \begin{align}
        \frac{\lambda_1(\tilde L_{H_n})}{n} \xrightarrow{d} \frac{c}{2}\zeta +\sqrt{\frac{c^2}{4}\zeta +c(1-c)}, \label{eq:gham_extreme_9}\\
        \frac{\lambda_1(\tilde L_{H_n})}{n} \xrightarrow{d} \frac{c}{2}\zeta -\sqrt{\frac{c^2}{4}\zeta +c(1-c)}. \label{eq:gham_extreme_10}
    \end{align}
    \end{enumerate}
    \item [(C)]
    We have the following estimates for the other extreme eigenvalues of $\tilde L_{H_n}$.
    \vskip5pt
    \begin{enumerate}
    \item [(i)]
    If $r \ll \sqrt n$, then
    \begin{align}
        \min\bigg\{\frac{\sqrt{n \log n}}{r}, \sqrt {\log n} \bigg\}\bigg(\frac{(r-1) \lambda_{1+k}(\tilde L_{H_n})}{n\sqrt{2 \log n}} -\sqrt{c_n(1-c_n)}\bigg) &= O_P(1), \label{eq:gham_extreme_13}\\
        \min\bigg\{\frac{\sqrt{n \log n}}{r}, \sqrt {\log n} \bigg\} \bigg(\frac{(r-1) \lambda_{n-k}(\tilde L_{H_n})}{n\sqrt{2 \log n}} +\sqrt{c_n(1-c_n)}\bigg) &= O_P(1). \label{eq:gham_extreme_14}
    \end{align}
    \item [(ii)]
    If $r \gg \sqrt{n \log n}$, then
    \begin{align}
        \frac{r}{\sqrt{n \log n}}\bigg(\frac{\lambda_{1+k}(\tilde L_{H_n})}{\sqrt n} -2 (1-c_n)\bigg) &=O_P(1), \label{eq:gham_extreme_17}\\ 
        \frac{r}{\sqrt{n \log n}}\bigg(\frac{\lambda_{n-k}(\tilde L_{H_n})}{\sqrt n} +2 (1-c_n)\bigg) &=O_P(1). \label{eq:gham_extreme_18}
    \end{align}
    \end{enumerate}
    \end{enumerate}
\end{theorem}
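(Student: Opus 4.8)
The plan is to exploit the low-rank Gaussian representation of $H_n$ that already underlies the proof of Theorem~\ref{thm:eesd_gaussian}. Since $\bZ$ consists of i.i.d.\ standard Gaussians and the law of $H_n$ is determined by the parameters $\gamma_n, \rho_n$, one has the distributional identity
\[
    H_n \overset{d}{=} \alpha\, \tilde W + \beta\,(\bg\bone^\top + \bone\bg^\top - 2\diag(\bg)) + \delta\, g_0\,(J - I),
\]
where $\tilde W$ is a zero-diagonal Gaussian Wigner matrix, $\bg \sim N(0, I_n)$ and $g_0 \sim N(0,1)$ are mutually independent, $J = \bone\bone^\top$, and $\alpha^2 = 1 - 2\gamma_n + \rho_n$, $\beta^2 = \gamma_n - \rho_n$, $\delta^2 = \rho_n$; in particular $\alpha^2 \to (1-c)^2$, $\beta^2 = c_n(1-c_n) + O(1/n) \to c(1-c)$, $\delta^2 = c_n^2 + O(1/n) \to c^2$. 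Applying $\diag((\cdot)\bone)$ to this decomposition and simplifying gives the exact identities
\begin{align*}
    L_{H_n} &= \alpha L_{\tilde W} + \beta n\,\diag(\bg) + (\beta s + \delta g_0 n)\, I - \beta(\bg\bone^\top + \bone\bg^\top) - \delta g_0 J, \\
    \tilde L_{H_n} &= \alpha \tilde L_{\tilde W} + \tfrac{\beta(n + 2r - 4)}{r - 1}\,\diag(\bg) + \tfrac{\beta s + \delta g_0(n + r - 2)}{r - 1}\, I - \beta(\bg\bone^\top + \bone\bg^\top) - \delta g_0 J,
\end{align*}
with $s := \bone^\top \bg$. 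These reduce everything to a perturbative analysis of three competing contributions: the full-rank diagonal piece $\diag(\bg)$ (operator norm of order $\sqrt{\log n}$ times the relevant prefactor), the Wigner/Wigner-Laplacian piece (operator norm $O_P(\sqrt n)$ or $O_P(\sqrt{n\log n})$), and the rank-$\le 2$ piece $B := \beta(\bg\bone^\top + \bone\bg^\top) + \delta g_0 J$ (operator norm of order $\beta n$).

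Next I would collect the required inputs: (a) operator-norm and edge rigidity estimates for Wigner matrices, $\|\tilde W\|_{\op} = O_P(\sqrt n)$ and $\lambda_{1+k}(\tilde W) = 2\sqrt n + O_P(n^{-1/6})$, $\lambda_{n-k}(\tilde W) = -2\sqrt n + O_P(n^{-1/6})$ for fixed $k$; (b) $\|\diag(\tilde W\bone)\|_{\op} = \max_i |(\tilde W\bone)_i| = O_P(\sqrt{n\log n})$, hence $\|L_{\tilde W}\|_{\op} = O_P(\sqrt{n\log n})$ and $\|\tilde L_{\tilde W}\|_{\op} = O_P(\sqrt n + r^{-1}\sqrt{n\log n})$; (c) for i.i.d.\ standard Gaussians, the $k$-th largest/smallest order statistic satisfies $g_{(n+1-k)}/\sqrt{2\log n} \convp 1$ with $g_{(n+1-k)} - \sqrt{2\log n} = O_P(\log\log n / \sqrt{\log n})$; and (d) the key structural fact that, with probability tending to one, $B$ has exactly one strictly positive and one strictly negative eigenvalue, each of magnitude $\Theta(\beta n)$: restricting $B$ to $\mathrm{span}\{\bg, \bone\}$ one obtains a $2 \times 2$ matrix of trace $2\beta s + \delta g_0 n$ and determinant $\beta^2(s^2 - n\|\bg\|^2)$, the latter negative since $s^2 = O_P(n)$ while $n\|\bg\|^2 = \Theta(n^2)$; moreover, when $c_n \to c \in (0,1)$, $n^{-1}\lambda_1(B) \convd \frac c2\zeta + \sqrt{\frac{c^2}{4}\zeta^2 + c(1-c)}$ and $n^{-1}\lambda_n(B) \convd \frac c2\zeta - \sqrt{\frac{c^2}{4}\zeta^2 + c(1-c)}$ with $\zeta := g_0$ (this is exactly the computation performed in the proof of Theorem~\ref{thm:extreme_eigen}).

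With these in hand, each estimate follows by identifying which contribution dominates and applying Weyl's inequality (for $\lambda_1, \lambda_n$) or Cauchy interlacing (for $\lambda_{1+k}, \lambda_{n-k}$, $k \ge 1$). For \eqref{eq:gham_extreme_5}--\eqref{eq:gham_extreme_6}, the term $\beta n\,\diag(\bg)$ dominates (the rest having operator norm $O_P(n)$), so Weyl gives $\lambda_k(L_{H_n}) = \beta n\, g_{(n+1-k)} + (\beta s + \delta g_0 n) + O_P(n)$; dividing by $n\sqrt{2\log n}$, subtracting $\sqrt{c_n(1-c_n)}$, and multiplying by $\sqrt{\log n}$ leaves an $O_P(1)$ coming from the $O_P(n)$ remainder, the order-statistic and prefactor errors being $o_P(1)$ at this scale; the bottom eigenvalues are handled via $g_{(k)}$. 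For part (B): if $r \ll \sqrt{\log n}$ the diagonal term $\tfrac{\beta(n + 2r - 4)}{r - 1}\diag(\bg)$ (norm $\asymp \tfrac{\beta n}{r}\sqrt{\log n} \gg n$) dominates and the identical argument yields \eqref{eq:gham_extreme_7}--\eqref{eq:gham_extreme_8}; if $c_n \to c \in (0,1)$ then $-B$ (norm $\asymp n$) dominates, all other terms being $o_P(n)$, so $n^{-1}\lambda_1(\tilde L_{H_n}) = -n^{-1}\lambda_n(B) + o_P(1)$ and $n^{-1}\lambda_n(\tilde L_{H_n}) = -n^{-1}\lambda_1(B) + o_P(1)$, which with (d) and $-\zeta \overset{d}{=} \zeta$ give \eqref{eq:gham_extreme_9}--\eqref{eq:gham_extreme_10}. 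For part (C), write $\tilde L_{H_n} = Y - B$ with $Y := \alpha\tilde L_{\tilde W} + \tfrac{\beta(n + 2r - 4)}{r - 1}\diag(\bg) + (\text{scalar})I$; since by (d) the rank-$\le 2$ matrix $-B$ has exactly one positive and one negative eigenvalue, Cauchy interlacing gives $\lambda_{2 + k}(Y) \le \lambda_{1+k}(\tilde L_{H_n}) \le \lambda_k(Y)$ (and the mirror inequalities at the bottom), so $B$ is invisible at these indices and it remains to control fixed-index eigenvalues of $Y$: when $r \ll \sqrt n$ the $\diag(\bg)$ term governs the edge of $Y$ as in part (B)(i), giving \eqref{eq:gham_extreme_13}--\eqref{eq:gham_extreme_14}; when $r \gg \sqrt{n\log n}$ the $\diag(\bg)$ term has norm $o(\sqrt n)$, so $\lambda_{k'}(Y) = -\alpha\lambda_{n + 1 - k'}(\tilde W) + O_P(\tfrac{\beta n}{r}\sqrt{\log n}) = 2\alpha\sqrt n + O_P(\tfrac{\beta n}{r}\sqrt{\log n})$ by (a)--(b), and multiplying $\tfrac{\lambda_{1+k}(\tilde L_{H_n})}{\sqrt n} - 2(1 - c_n)$ by $\tfrac{r}{\sqrt{n\log n}}$ leaves $O_P(1)$, giving \eqref{eq:gham_extreme_17}--\eqref{eq:gham_extreme_18}. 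The $L_{H_n}$ claim in part (A) is uniform in $r$: for fixed $r$ all terms of $L_{H_n}$ have norm $O_P(\sqrt{n\log n})$, so both $\tfrac{\lambda_k(L_{H_n})}{n\sqrt{2\log n}}$ and $\sqrt{c_n(1-c_n)}$ are $O_P(n^{-1/2})$ and the bound is immediate, while for $r \to \infty$ the argument above applies verbatim.

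I expect the main obstacle to be the regime-by-regime bookkeeping — verifying in each case exactly which of the three competing contributions wins and checking that the subdominant ones, propagated through Weyl's inequality or interlacing, land at precisely the advertised scale rather than a coarser one — together with the structural lemma (d) that $B$ supplies exactly one top and one bottom outlier, which is what cleanly decouples the behaviour of $\lambda_1, \lambda_n$ from that of $\lambda_{1+k}, \lambda_{n-k}$; the sharp $O_P(1)$ rates themselves rest on Gaussian extreme-value asymptotics for the $\diag(\bg)$ piece and on edge rigidity of Wigner matrices for the bulk edge.
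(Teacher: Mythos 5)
Your proposal is correct and follows essentially the same route as the paper: the same Gaussian low-rank representation, the same split of the Laplacians into a Gaussian diagonal piece, a Wigner(-Laplacian) piece and a rank-$\le 2$ piece plus scalar shifts, and the same regime-by-regime use of Weyl/interlacing, Gaussian extreme order statistics and Wigner edge bounds (your explicit handling of the sign of the rank-two piece in (B)(ii) via $-\zeta \overset{d}{=} \zeta$ is in fact cleaner than the paper's write-up). Two of your side remarks on magnitudes are inaccurate --- in (B)(i) the dominant diagonal term has operator norm of order $\sqrt{n\log n/r}$, not $\gg n$, and for $\log n \ll r \ll \sqrt n$ in (C)(i) the $\diag(\bg)$ term need not exceed the Wigner part in norm --- but neither affects the proof, since the stated $O_P(1)$ bounds only require the $O_P(n)$-scale error budgets that your Weyl/interlacing accounting does verify.
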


\section{Proofs}\label{sec:proofs}
\subsection{Proof of Theorem~\ref{thm:univ}}
The proofs are based on Propositions~\ref{prop:bulk_universality} and \ref{prop:bulk_universality_laplacian} below, whose proofs are extensions of the argument given in \cite[Section 2]{chatterjee2005simple} to the GHAM setting. We first recall the main Lindeberg swapping result for \cite{chatterjee2005simple}.
Let $\mathbf{X} = (X_1, \ldots, X_n)$ and $\mathbf{Y} = (Y_1,\ldots, Y_n) $ be two vectors of independent random variables with finite second moments, taking values in some open interval $I$ and satisfying, for each $i$, $\bbE X_i = \bbE Y_i$ and $\bbE X^2_i = \bbE Y^2_i$.
\begin{theorem}[\protect{\cite[Theorem 1.1]{chatterjee2005simple}}]
Let $f: I^n \to \bbR$ be thrice differentiable in each argument. If we set $U= f(\mathbf{X})$ and $V= f(\mathbf{Y})$, then for any thrice differentiable $g : \bbR \to \bbR$ and any $K >0$,
\begin{align*}
    |\bbE g(U) - \bbE g(V)| &\le C_1(g) \lambda_2(f) \sum_{i = 1}^n \big[\bbE[X_i^2 \ind(|X_i|> K)] +  \bbE[Y_i^2 \ind(|Y_i|> K)] \big] \\
    &\qquad + C_2(g) \lambda_3(f) \sum_{i = 1}^n \big[\bbE[|X_i|^3 \ind(|X_i|\le K)] + \bbE[|Y_i|^3 \ind(|Y_i| \le K)] ], 
 \end{align*}
where $C_1(g) = \|g^{\prime}\|_{\infty} + \|g^{\prime \prime }\|_{\infty}$, $C_2(g) = \frac{1}{6} \|g^{\prime}\|_{\infty} + \frac{1}{2}\|g^{\prime \prime }\|_{\infty} + \frac{1}{6}\|g^{\prime \prime \prime}\|_{\infty}$ and 
\[
    \lambda_s(f) ;= \sup\{|\partial^q_i f(\bx)|^{\frac{s}{q}} : 1 \le i \le n, 1 \le q \le s, x \in I^n\},
\]
where $\partial^{q}_i$ denotes $q$-fold differentiation with respect to the $i$-th coordinate. 
\end{theorem}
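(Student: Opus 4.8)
The plan is to prove this by the Lindeberg replacement (hybrid) argument, exchanging the coordinates of $\bX$ for those of $\bY$ one at a time. First I would set $\bW^{(0)} = \bX$ and, for $1 \le i \le n$, $\bW^{(i)} := (Y_1, \dots, Y_i, X_{i+1}, \dots, X_n)$, so that $\bW^{(n)} = \bY$, together with $\bW^{(i,0)} := (Y_1, \dots, Y_{i-1}, 0, X_{i+1}, \dots, X_n)$, the same vector with a $0$ in slot $i$. Telescoping then gives
\[
    \bbE g(U) - \bbE g(V) = \sum_{i = 1}^n \big( \bbE g(f(\bW^{(i-1)})) - \bbE g(f(\bW^{(i)})) \big),
\]
and for each $i$ the vectors $\bW^{(i-1)}$ and $\bW^{(i)}$ differ only in coordinate $i$, carrying $X_i$ and $Y_i$ respectively, both independent of the remaining randomness $\bW^{(i,0)}$.

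For the $i$-th term I would introduce the one-variable function $\phi_i(t) := g\big(f(\bW^{(i,0)} + t \, \mathbf{e}_i)\big)$ (thrice differentiable on $I$; WLOG $0 \in I$) and Taylor-expand at $0$: $\phi_i(t) = \phi_i(0) + \phi_i'(0) t + \tfrac12 \phi_i''(0) t^2 + R_i(t)$. Evaluating at $X_i$ and at $Y_i$, taking expectations, and using independence of $(X_i, Y_i)$ from $\bW^{(i,0)}$ together with $\bbE X_i = \bbE Y_i$, $\bbE X_i^2 = \bbE Y_i^2$, the zeroth-, first-, and second-order contributions cancel, leaving
\[
    \bbE g(f(\bW^{(i-1)})) - \bbE g(f(\bW^{(i)})) = \bbE[R_i(X_i)] - \bbE[R_i(Y_i)].
\]
I would then bound $R_i$ in two regimes: on $\{|t| \le K\}$ keep the \emph{cubic} Taylor estimate $|R_i(t)| \le \tfrac16 \|\phi_i'''\|_\infty |t|^3$ (taking the first-order remainder, whose leftover quadratic term again cancels in expectation), and on $\{|t| > K\}$ fall back to the \emph{quadratic} estimate $|R_i(t)| \le \tfrac12 \|\phi_i''\|_\infty t^2$, which gives
\[
    |\bbE[R_i(X_i)]| \le \tfrac16 \|\phi_i'''\|_\infty \bbE[|X_i|^3 \ind(|X_i| \le K)] + \tfrac12 \|\phi_i''\|_\infty \bbE[X_i^2 \ind(|X_i| > K)],
\]
and likewise for $Y_i$.

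The last step is to convert $\|\phi_i''\|_\infty, \|\phi_i'''\|_\infty$ into $\lambda_2(f), \lambda_3(f)$ and derivative norms of $g$ via the chain rule: $\phi_i'' = g''(f)(\partial_i f)^2 + g'(f)\,\partial_i^2 f$ and $\phi_i''' = g'''(f)(\partial_i f)^3 + 3 g''(f)\,\partial_i f\,\partial_i^2 f + g'(f)\,\partial_i^3 f$. From $\lambda_s(f) = \sup\{|\partial_i^q f|^{s/q} : i, q, \bx\}$ one reads $(\partial_i f)^2 \le \lambda_2(f)$, $|\partial_i^2 f| \le \lambda_2(f)$, $|\partial_i f|^3 \le \lambda_3(f)$, $|\partial_i f|\,|\partial_i^2 f| \le \lambda_3(f)^{1/3}\lambda_3(f)^{2/3} = \lambda_3(f)$ and $|\partial_i^3 f| \le \lambda_3(f)$, hence $\|\phi_i''\|_\infty \le C_1(g)\lambda_2(f)$ and $\|\phi_i'''\|_\infty \le (\|g'\|_\infty + 3\|g''\|_\infty + \|g'''\|_\infty)\lambda_3(f) = 6 C_2(g)\lambda_3(f)$. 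Summing the per-coordinate estimates over $i$ (the $X$- and $Y$-parts separately) then yields the claimed inequality. The step I expect to be most delicate is the remainder bound: one cannot use a single Taylor order throughout, and must keep the first-order remainder form available so that the matched second moment still cancels while the truncated-second-moment (quadratic) term survives on $\{|t| > K\}$; getting the chain-rule coefficients to reproduce exactly $C_1(g)$ and $C_2(g)$ is the other piece of bookkeeping.
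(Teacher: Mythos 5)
Your proposal reconstructs exactly the argument behind the cited result (the paper itself does not reprove \cite[Theorem 1.1]{chatterjee2005simple}, it only quotes it): telescoping over hybrid vectors, a second-order Taylor expansion in the exchanged coordinate around $0$, cancellation of the constant, linear and quadratic terms via independence and the matched first and second moments, a two-regime bound on the remainder, and the chain-rule bookkeeping that turns $\|\phi_i''\|_\infty$, $\|\phi_i'''\|_\infty$ into $C_1(g)\lambda_2(f)$ and $6C_2(g)\lambda_3(f)$. The skeleton and all the chain-rule estimates are correct.

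One step is mis-stated, although the theorem still follows once it is fixed. On $\{|t|>K\}$ the second-order remainder $R_i(t)=\phi_i(t)-\phi_i(0)-\phi_i'(0)t-\tfrac12\phi_i''(0)t^2$ satisfies only $|R_i(t)|\le \|\phi_i''\|_\infty\, t^2$ (write $R_i(t)=\tfrac12\big(\phi_i''(\xi)-\phi_i''(0)\big)t^2$ and note that $|\phi_i''(\xi)-\phi_i''(0)|$ can be as large as $2\|\phi_i''\|_\infty$), not $\tfrac12\|\phi_i''\|_\infty t^2$; and the parenthetical justification that the leftover quadratic term ``again cancels in expectation'' is false on the tail event, because only the \emph{full} second moments of $X_i$ and $Y_i$ are matched, not the truncated quantities $\bbE[X_i^2\ind(|X_i|>K)]$ and $\bbE[Y_i^2\ind(|Y_i|>K)]$. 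The factor $1$ (rather than $\tfrac12$) is precisely what produces the stated constant $C_1(g)=\|g'\|_\infty+\|g''\|_\infty$; your claimed $\tfrac12$ would yield a stronger bound that this argument cannot deliver, so simply replace it by the correct estimate and the constants come out exactly as in the statement. Two smaller points: expanding at $0$ and forming the hybrid vector with a $0$ in slot $i$ requires $0\in I$ (or at least that $f$ is defined there); this is not a ``WLOG'' obtainable by translation, since both the truncation and the moment matching are anchored at the origin, but it is the same implicit convention as in the original proof. Finally, note that $\phi_i''(0),\phi_i'(0)$ are random (they depend on $\bW^{(i,0)}$), so the cancellation step should be carried out by conditioning on $\bW^{(i,0)}$, using its independence from $(X_i,Y_i)$ and the uniform bounds $\|\phi_i''\|_\infty\le C_1(g)\lambda_2(f)$, $\|\phi_i'''\|_\infty\le 6C_2(g)\lambda_3(f)$ to guarantee integrability; your sketch implicitly does this and it is fine once said explicitly.
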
\label{thm:chatterjee}
\begin{proposition}\label{prop:bulk_universality}
Let $z = u + iv \in \bbC_+$. For $\mathbf{x} = (x_1, \ldots, x_{M})^\top$, consider the functions $H_n(\mathbf{x}) = \frac{1}{\sqrt{N}}\sum_{\ell = 1}^M x_{\ell} Q_{\ell}$, $R_n(\mathbf{x}) = (\frac{1}{\sqrt{n}}H_n(\mathbf{x}) - z I)^{-1}$ and $G_n(\mathbf{x}) = \frac{1}{n} \Tr R_n(\mathbf{x})$. Let $\mathbf{Y} = (Y_1, Y_2, \ldots, Y_M)^\top$ and $\mathbf{Z} = (Z_1, Z_2, \ldots, Z_M)^\top$, where the $Y_{\ell}$'s are independent zero mean and unit variance random variables and they satisfy Assumption~\ref{ass:tail_of_entries} and  the $Z_{\ell}$'s are i.i.d. standard Gaussians. Then, we have for any $K > 0$,
\begin{align}
\label{eq:upper_bound_univ} \nonumber
    |\bbE(G_n&(\mathbf{Y})) - \bbE(G_n(\mathbf{Z}))| \\ \nonumber
    &\le 4 \max(v^{-3}, v^{-4}) \frac{r^2(r - 1)^2}{n^2 N} \sum_{\ell = 1}^M \bigg[ \bbE[Y^2_{\ell}\ind(|Y_{\ell}| > K)] + \bbE[Z^2_{\ell}\ind(|Z_{\ell}| > K)] \bigg] \\
    &\quad + 12 \max(v^{-6}, v^{-\frac{9}{2}}, v^{-4}) \frac{r^3(r - 1)^3}{n^{5/2} N^{3/2}} \sum_{\ell = 1}^M \bigg[ \bbE[|Y_{\ell}|^3\ind(|Y_{\ell}| \le K)] + \bbE[|Z_{\ell}|^3\ind(|Z_{\ell}| \le K)] \bigg].
\end{align}
\end{proposition}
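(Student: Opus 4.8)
The plan is to invoke Chatterjee's swapping estimate (Theorem~\ref{thm:chatterjee}) with $f = G_n$, viewed as a function of the $M$ coordinates $x_1,\ldots,x_M$, and with the test function $g$ taken to be the identity (so that $C_1(g)=1$ and $C_2(g)=\tfrac16$). Since $G_n$ is $\bbC$-valued whereas Theorem~\ref{thm:chatterjee} is stated for real-valued $f$, I would first split
\[
    |\bbE G_n(\bY) - \bbE G_n(\bZ)| \le |\bbE\,\Re G_n(\bY) - \bbE\,\Re G_n(\bZ)| + |\bbE\,\Im G_n(\bY) - \bbE\,\Im G_n(\bZ)|
\]
and run the argument for $f = \Re G_n$ and $f = \Im G_n$ separately; this accounts for the leading factor $2$ in \eqref{eq:upper_bound_univ}. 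Fix $z = u + iv \in \bbC_+$. For every $\bx \in \bbR^M$ the matrix $\tfrac{1}{\sqrt n}H_n(\bx)$ is real symmetric, hence has real spectrum, so $R_n(\bx) = (\tfrac{1}{\sqrt n}H_n(\bx) - zI)^{-1}$ and $G_n(\bx) = \tfrac1n\Tr R_n(\bx)$ are smooth on all of $\bbR^M$ (we take the base interval to be $I = \bbR$), and the bound $\|R_n(\bx)\|_{\op} \le v^{-1}$ holds uniformly in $\bx$.

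The second step is to bound the first three partial derivatives of $G_n$ in a single coordinate $x_\ell$. Since $\partial_\ell(\tfrac{1}{\sqrt n}H_n - zI) = \tfrac{1}{\sqrt{nN}}Q_\ell =: P_\ell$ is a constant matrix, iterating the resolvent identity $\partial_\ell R_n = -R_n P_\ell R_n$ and using cyclicity of the trace gives
\begin{align*}
    \partial_\ell G_n &= -\tfrac{1}{n^{3/2}N^{1/2}}\Tr(Q_\ell R_n^2), \\
    \partial_\ell^2 G_n &= \tfrac{2}{n^2 N}\Tr(Q_\ell R_n Q_\ell R_n^2), \\
    \partial_\ell^3 G_n &= -\tfrac{6}{n^{5/2}N^{3/2}}\Tr(Q_\ell R_n Q_\ell R_n Q_\ell R_n^2).
\end{align*}
Now $Q_\ell$ is the adjacency matrix of a clique on the $r$ vertices of $e_\ell$, padded by zeros, so its nonzero eigenvalues are $r-1$ (once) and $-1$ (with multiplicity $r-1$); hence $\|Q_\ell\|_{\op} = r-1$ and $\rank(Q_\ell) = r$. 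Combining $\|R_n\|_{\op} \le v^{-1}$ with the elementary inequality $|\Tr(Q_\ell M)| \le \rank(Q_\ell)\,\|Q_\ell\|_{\op}\,\|M\|_{\op}$, valid for any matrix $M$, I obtain the uniform-in-$\bx$ estimates
\[
    |\partial_\ell G_n| \le \frac{r(r-1)}{n^{3/2}N^{1/2}v^2}, \qquad |\partial_\ell^2 G_n| \le \frac{2r(r-1)^2}{n^2 N v^3}, \qquad |\partial_\ell^3 G_n| \le \frac{6r(r-1)^3}{n^{5/2}N^{3/2}v^4},
\]
and the same bounds hold with $G_n$ replaced by $\Re G_n$ or $\Im G_n$.

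From these I would read off the quantities $\lambda_2(f)$ and $\lambda_3(f)$ appearing in Theorem~\ref{thm:chatterjee}. Using $n \ge 1$ and $r \ge 1$ to absorb stray factors into the dominant term, one gets, for $f \in \{\Re G_n, \Im G_n\}$, the bounds $\lambda_2(f) \le 2\,\tfrac{r^2(r-1)^2}{n^2 N}\max(v^{-3}, v^{-4})$ and $\lambda_3(f) \le 6\,\tfrac{r^3(r-1)^3}{n^{5/2}N^{3/2}}\max(v^{-6}, v^{-9/2}, v^{-4})$. Feeding these into Theorem~\ref{thm:chatterjee} with the truncation level $K$, summing over $1 \le \ell \le M$, and adding the real and imaginary contributions yields an inequality of exactly the form \eqref{eq:upper_bound_univ}; the displayed constants $4$ and $12$ simply absorb the factor $2$ from the real/imaginary split, the combinatorial factors $1, 2, 6$ coming from the three orders of differentiation, and a harmless amount of slack in the crude norm bounds above.

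I do not expect a serious obstacle here: conceptually this is a direct application of the Lindeberg swapping method. The one point requiring care is that all three derivative bounds must hold \emph{uniformly} over $\bx \in \bbR^M$ — this uniformity is precisely what the swapping argument consumes — and it is guaranteed solely by the $\bx$-independent resolvent estimate $\|R_n(\bx)\|_{\op} \le v^{-1}$ together with the spectral description of $Q_\ell$; everything beyond that is the routine bookkeeping of powers of $r$, $n$, $N$ and $v$.
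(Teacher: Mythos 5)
Your proposal is correct and follows essentially the same route as the paper: Chatterjee's Lindeberg theorem applied separately to $\Re G_n$ and $\Im G_n$ with the identity test function, combined with uniform-in-$\bx$ bounds on the first three partial derivatives of the trace of the resolvent obtained from $\|R_n(\bx)\|_{\op}\le v^{-1}$ and the spectral data of $Q_\ell$. The only (immaterial) difference is that you bound the traces via $|\Tr(Q_\ell M)|\le \rank(Q_\ell)\,\|Q_\ell\|_{\op}\,\|M\|_{\op}$, whereas the paper uses an entrywise bound for the first derivative and Cauchy--Schwarz with Frobenius norms for the higher ones; your intermediate estimates carry an extra factor of $(r-1)$ in places but are still absorbed by the stated $r^2(r-1)^2$ and $r^3(r-1)^3$, so the claimed inequality follows.
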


\begin{proof}
Let $f(\bx) = G_n(\bx)$. We will apply Theorem~\ref{thm:chatterjee} separately on the real and imaginary parts of $f$. Writing $f(\bx) = \Re f(\bx) + \iota \Im f(\bx)$ (with $\iota := \sqrt{-1}$), it is easy to see that $\max\{|\partial^q_i \Re f|, |\partial^q_i \Im f|\} \le |\partial^q_i f|$ for any $1 \le i \le M$ and $1 \le q \le 3$. As a result,
\[
    \max\big\{\lambda_2(\Re f),  \lambda_2(\Im f) \big\} \le \lambda_2(f) \ \ \text{and} \ \ \max\big\{\lambda_3(\Re f), \lambda_3(\Im f)\big\} \le \lambda_3(f).
\]
In view of this, it is enough to show that 
\begin{align*}
    \lambda_2(f) &= \sup_{\bx}\bigg\{|\partial_{\ell} f(\bx)|^2, |\partial^2_{\ell} f(\bx)| : 1 \le \ell \le M \bigg\} \le 2\max( v^{-3}, v^{-4}) \frac{r^2(r - 1)^2}{n^2N}, \\
    \lambda_3(f) &= \sup_{\bx}\bigg\{ |\partial_\ell f(\bx)|^{3}, |\partial^2_{\ell} f(\bx)|^{3/2}, |\partial^3_{\ell} f(\bx)| : 1 \le \ell \le M \bigg\} \le 6 \max( v^{-6}, v^{-\frac{9}{2}}, v^{-4}) \frac{r^3(r - 1)^3}{n^{5/2}N^{3/2}}.
\end{align*}
First note that
\[
    \frac{\partial^{q} f(\bx)}{\partial x_{\ell}^{q}} = \frac{1}{n} \Tr \frac{\partial^{q} R_n(\bx)}{\partial x_{\ell}^{q}}, \quad q \ge 1.
\]
Differentiating the relation $\big(\frac{1}{\sqrt{n}} H_n(\bx) - z I\big) R_n(\bx) = I$, one obtains that
\begin{align*}
    \frac{\partial R_n(\bx) }{\partial x_{\ell}} &= - \frac{1}{\sqrt{n}}  R_n(\bx) \frac{\partial H_n(\bx) }{\partial x_{\ell}} R_n(\bx), \\
    \frac{\partial^2 R_n(\bx) }{\partial x^2_{\ell}} &= \frac{2}{n}  R_n(\bx) \frac{\partial H_n(\bx) }{\partial x_{\ell}} R_n(\bx) \frac{\partial H_n(\bx) }{\partial x_{\ell}} R_n(\bx), \\
    \frac{\partial^3 R_n(\bx) }{\partial x^3_{\ell}} &= - \frac{6}{n^{\frac{3}{2}}}  R_n(\bx) \frac{\partial H_n(\bx) }{\partial x_{\ell}} R_n(\bx) \frac{\partial H_n(\bx) }{\partial x_{\ell}} R_n(\bx)\frac{\partial H_n(\bx) }{\partial x_{\ell}} R_n(\bx).
\end{align*}
Therefore
\begin{align}
    \frac{\partial f(\bx) }{\partial x_{\ell}} = - \frac{1}{n\sqrt{n}} \Tr\bigg( R_n(\bx) \frac{\partial H_n(\bx) }{\partial x_{\ell}} R_n(\bx)\bigg) = - \frac{1}{n^{\frac{3}{2}}} \Tr\bigg(\frac{\partial H_n(\bx) }{\partial x_{\ell}} R^2_n(\bx)\bigg),
\end{align}
and similarly,
\begin{align}
    \frac{\partial^2 f(\bx) }{\partial x^2_{\ell}} &= \frac{2}{n^2} \Tr\bigg( \frac{\partial H_n(\bx) }{\partial x_{\ell}} R_n(\bx) \frac{\partial H_n(\bx) }{\partial x_{\ell}} R^2_n(\bx)\bigg), \\
    \frac{\partial^3 f(\bx) }{\partial x^3_{\ell}} &= - \frac{6}{n^{\frac{5}{2}}} \Tr\bigg(\frac{\partial H_n(\bx)}{\partial x_{\ell}} R_n(\bx)  \frac{\partial H_n(\bx) }{\partial x_{\ell}} R_n(\bx) \frac{\partial H_n(\bx) }{\partial x_{\ell}} R^2_n(\bx)\bigg).
\end{align}
Observe that 
\[
    \Tr\bigg(\frac{\partial H_n(\bx) }{\partial x_{\ell}} R^2_n(\bx)\bigg) = \sum_{1 \le i, j \le n} \bigg( \frac{\partial H_n(\bx) }{\partial x_{\ell}} \bigg)_{ij} \big( R^2_n(\bx)\big)_{ji}.
\]
Let $\Lambda = \diag( \lambda_1, \ldots, \lambda_n)$, where $\lambda_i$'s are the eigenvalues of $\frac{1}{\sqrt{n}} H_n(\bx)$. Then from the spectral decomposition $\frac{1}{\sqrt{n}}H_n(\bx) = U \Lambda U^\top$, we have $R^2_n(\bx) = U (\Lambda - zI)^{-2} U^{\top}$. Note that $|((\Lambda - zI)^{-2})_{ii}| = |\frac{1}{\lambda_i - z}|^2 \le \frac{1}{v^2}$. Therefore
\[
    |(R^2_n(\bx))_{ij}| = |\sum_{k} ((U - zI)^{-2})_{kk} U_{ik} U_{kj}| \le \frac{1}{v^2} \sum_{k}|U_{ik}| |U_{kj}| \le \frac{1}{v^2} \big(\big(\sum_k U_{ik}^2) \big(\sum_k U_{kj}^2\big)\big)^{1/2} = \frac{1}{v^2}.
\]
Hence
\[
    \bigg|\Tr\bigg(\frac{\partial H_n(\bx) }{\partial x_{\ell}} R^2_n(\bx)\bigg) \bigg| \le \frac{1}{v^2}  \sum_{1 \le i, j \le n} \bigg| \bigg( \frac{\partial H_n(\bx) }{\partial x_{\ell}} \bigg)_{ij} \bigg|.
\]
Note that $\frac{\partial H_n(\bx) }{\partial x_{\ell}} = \frac{1}{\sqrt{N}} Q_{\ell}$ and for any $\ell$,
\[
    \sum_{1 \le i, j \le n} \bigg|\bigg( \frac{\partial H_n(\bx) }{\partial x_{\ell}} \bigg)_{ij} \bigg| = \frac{r(r - 1)}{\sqrt{N}}.
\]
Therefore
\begin{equation}\label{eq:partial_1}
    \bigg\|\frac{\partial f(\bx) }{\partial x_{\ell}} \bigg\|_{\infty} \le \frac{1}{v^2} \frac{r(r - 1)}{n^{3/2}\sqrt{N}}.
\end{equation}
For bounding $\|\frac{\partial^{q} f(\bx)}{\partial x_{\ell}^{q}}\|_{\infty}$, $q = 2, 3$, we will need the the Frobenius and operator norms of $\frac{\partial H_n(\bx)}{\partial x_{\ell}}$. The Frobenius norm is easy to calculate:
\[
    \bigg\|\frac{\partial H_n(\bx)}{\partial x_{\ell}}\bigg\|_F = \bigg\|\frac{1}{\sqrt{N}}Q_{\ell}\bigg\|_F = \sqrt{\frac{r(r - 1)}{N}}.
\]
Also, relabeling the vertices if necessary, we may assume that
\[
    Q_{\ell} = \begin{pmatrix}
        J_r - I_r & 0 \\
        0 & 0
    \end{pmatrix}.
\]
Hence $\|Q_{\ell}\|_{\mathrm{op}} = r - 1$ and so
\[
    \bigg\|\frac{\partial H_n(\bx)}{\partial x_{\ell}}\bigg\|_{\mathrm{op}} = \bigg\|\frac{1}{\sqrt{N}}Q_{\ell}\bigg\|_{\mathrm{op}} = \frac{r - 1}{\sqrt{N}}.
\]
Now,
\begin{align*}
    \Tr\bigg(\frac{\partial H_n(\bx)}{\partial x_{\ell}} &R_n(\bx) \frac{\partial H_n(\bx)}{\partial x_{\ell}} R^2_n(\bx)\bigg) \\
    &\le \bigg\|\frac{\partial H_n(\bx)}{\partial x_{\ell}}\bigg\|_F \cdot \bigg\|R_n(\bx) \frac{\partial H_n(\bx)}{\partial x_{\ell}} R^2_n(\bx)\bigg\|_F \qquad \text{(by Cauchy-Schwartz)} \\
    &\le \bigg\|\frac{\partial H_n(\bx)}{\partial x_{\ell}}\bigg\|_F \cdot \|R_n(\bx)\|_{\mathrm{op}} \cdot \bigg\|\frac{\partial H_n(\bx)}{\partial x_{\ell}} R^2_n(\bx)\bigg\|_F \\
    &\le \bigg\|\frac{\partial H_n(\bx)}{\partial x_{\ell}}\bigg\|_F \cdot \|R_n(\bx)\|_{\mathrm{op}} \cdot \bigg\|\frac{\partial H_n(\bx)}{\partial x_{\ell}}\bigg\|_F \cdot \|R^2_n(\bx)\|_{\mathrm{op}} \\
    &\le \bigg\|\frac{\partial H_n(\bx)}{\partial x_{\ell}}\bigg\|_F^2 \cdot \|R_n(\bx)\|_{\mathrm{op}}^3 \\
    &\le \frac{r(r - 1)}{N} \frac{1}{v^3}.
\end{align*}
This gives
\begin{equation}\label{eq:partial_2}
    \bigg\|\frac{\partial^2 f(\bx) }{\partial x^2_{\ell}} \bigg\|_{\infty} \le \frac{2 r(r - 1)}{n^2 N v^3}.
\end{equation}
For the third derivative, we do the following
\begin{align*}
    \Tr\bigg(\frac{\partial H_n(\bx)}{\partial x_{\ell}} & R_n(\bx)\frac{\partial H_n(\bx)}{\partial x_{\ell}} R_n(\bx) \frac{\partial H_n(\bx)}{\partial x_{\ell}} R^2_n(\bx)\bigg) \\
    &\le \bigg\|\frac{\partial H_n(\bx)}{\partial x_{\ell}} R_n(\bx) \frac{\partial H_n(\bx)}{\partial x_{\ell}} \bigg\|_F \cdot \bigg\| R_n(\bx) \frac{\partial H_n(\bx)}{\partial x_{\ell}} R^2_n(\bx)\bigg\|_F \qquad \text{(by Cauchy-Schwartz)} \\
    &\le \bigg\|\frac{\partial H_n(\bx)}{\partial x_{\ell}}\bigg\|_{\mathrm{op}} \cdot \bigg\|R_n(\bx) \frac{\partial H_n(\bx)}{\partial x_{\ell}}\bigg\|_F \cdot \bigg\|R_n(\bx)\frac{\partial H_n(\bx)}{\partial x_{\ell}} R^2_n(\bx)\bigg\|_F \\
    &\le \bigg\|\frac{\partial H_n(\bx)}{\partial x_{\ell}}\bigg\|_{\mathrm{op}} \cdot \|R_n(\bx)\|_{\mathrm{op}} \cdot \bigg\|\frac{\partial H_n(\bx)}{\partial x_{\ell}}\bigg\|_F \cdot \bigg\|R_n(\bx)\frac{\partial H}{\partial x_{\ell}} R^2\bigg\|_F \\
    &\le \bigg\|\frac{\partial H_n(\bx)}{\partial x_{\ell}}\bigg\|_{\mathrm{op}} \cdot \bigg\|\frac{\partial H_n(\bx)}{\partial x_{\ell}}\bigg\|_F^2 \cdot \|R_n(\bx)\|_{\mathrm{op}}^4 \\
    &\le \frac{r - 1}{\sqrt{N}} \cdot \frac{r(r - 1)}{N} \cdot \frac{1}{v^4} \\
    &\le \frac{r(r - 1)^2}{N^{3/2}} \cdot \frac{1}{v^4}.
\end{align*}
This yields
\begin{equation}\label{eq:partial_3}
    \bigg\|\frac{\partial^3 f(\bx) }{\partial x^3_{\ell}} \bigg\|_{\infty} \le \frac{6 r(r - 1)^2}{n^{5/2} N^{3/2} v^4}.
\end{equation}
Using \eqref{eq:partial_1}, \eqref{eq:partial_2} and \eqref{eq:partial_3}, we obtain
\begin{align*}
    \lambda_2(f) &\le  2\max( v^{-3}, v^{-4}) \frac{r^2(r - 1)^2}{n^2N}, \\
    \lambda_3(f) &\le 6 \max( v^{-6}, v^{-\frac{9}{2}}, v^{-4}) \frac{r^3(r - 1)^3}{n^{\frac{5}{2}}N^{\frac{3}{2}}}.
\end{align*}
This completes the proof.
\end{proof}

For the two Laplacian matrices $(nr)^{-1/2}L_{H_n}$ and $n^{-1/2}\tilde{L}_{H_n}$, we have the following results. Their proofs are similar to the proof of Proposition~\ref{prop:bulk_universality} and hence are relegated to the appendix.
\begin{proposition}\label{prop:bulk_universality_laplacian_orig}
Let $z = u + iv \in \bbC_+$. For $\bx = (x_1, \ldots, x_{M})^\top$, consider the functions $H_n(\bx) = \frac{1}{\sqrt{N}}\sum_{\ell = 1}^M x_{\ell} Q_{\ell}$, $L_{H_n}(\bx) \equiv L_{H_n(\bx)} = \diag(H_n(\bx) \bone) - H_n(\bx)$, $\hat{R}_n(\bx) = (\frac{1}{\sqrt{nr}}L_{H_n}(\bx) - z I)^{-1}$ and $\hat{G}_n(\bx) = \frac{1}{n} \Tr \hat{R}_n(\bx)$. Let $\bY = (Y_1, Y_2, \ldots, Y_M)^\top$ and $\bZ = (Z_1, Z_2, \ldots, Z_M)^\top$, where the $Y_{\ell}$'s are independent random variables with zero mean and unit variance satisfying Assumption~\ref{ass:tail_of_entries} and  the $Z_{\ell}$'s are i.i.d. standard Gaussians. Then, we have for any $K > 0$,
\begin{align}
\label{eq:upper_bound_univ_laplacian_orig} \nonumber
    |\bbE(\hat{G}_n&(\mathbf{Y})) - \bbE(\hat{G}_n(\mathbf{Z}))| \\ \nonumber
    &\le 8 \max(v^{-3}, v^{-4}) \frac{r^{3/2}}{n^2 N} \sum_{\ell = 1}^M \bigg[ \bbE[Y^2_{\ell}\ind(|Y_{\ell}| > K)] + \bbE[Z^2_{\ell}\ind(|Z_{\ell}| > K)] \bigg] \\
    &\quad + 16 \max(v^{-6}, v^{-\frac{9}{2}}, v^{-4}) \frac{r^{9/2}}{n^{5/2} N^{3/2}} \sum_{\ell = 1}^M \bigg[ \bbE[|Y_{\ell}|^3\ind(|Y_{\ell}| \le K)] + \bbE[|Z_{\ell}|^3\ind(|Z_{\ell}| \le K)] \bigg].
\end{align}
\end{proposition}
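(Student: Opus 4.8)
The plan is to imitate the Lindeberg swapping argument from the proof of Proposition~\ref{prop:bulk_universality}, replacing the matrix $\frac{1}{\sqrt n}H_n(\bx)$ and its resolvent by $\frac{1}{\sqrt{nr}}L_{H_n}(\bx)$ and $\hat{R}_n(\bx)$. Set $f(\bx) := \hat{G}_n(\bx) = \frac1n\Tr\hat{R}_n(\bx)$ and apply Theorem~\ref{thm:chatterjee} separately to $\Re f$ and $\Im f$; since $\max\{\lambda_s(\Re f),\lambda_s(\Im f)\}\le\lambda_s(f)$ for $s=2,3$, it suffices to bound $\lambda_2(f)$ and $\lambda_3(f)$, i.e. to bound $\sup_{\bx,\ell}|\partial_\ell^q f(\bx)|$ for $q=1,2,3$. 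Feeding those bounds into Chatterjee's inequality (once for the real part, once for the imaginary part, and adding) will then yield \eqref{eq:upper_bound_univ_laplacian_orig}, the point being that the milder powers of $r$ here relative to \eqref{eq:upper_bound_univ} are precisely what lets the weaker Assumption~\ref{ass:tail_of_entries_variation} (with the truncation level $K=K_n'$) kill the right-hand side as $n\to\infty$.

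For the derivative bounds I would first note that $L_{H_n}(\bx)=\diag(H_n(\bx)\bone)-H_n(\bx)=\frac1{\sqrt N}\sum_{\ell}x_\ell L_{Q_\ell}$ is linear in $\bx$, so $\partial_\ell L_{H_n}(\bx)=\frac1{\sqrt N}L_{Q_\ell}$, and differentiating the resolvent identity $\big(\tfrac1{\sqrt{nr}}L_{H_n}(\bx)-zI\big)\hat{R}_n(\bx)=I$ exactly as in Proposition~\ref{prop:bulk_universality} gives
\[
    \partial_\ell f = -\tfrac{1}{n\sqrt{nr}}\Tr\!\big((\partial_\ell L_{H_n})\hat{R}_n^2\big), \qquad \partial_\ell^2 f = \tfrac{2}{n^2 r}\Tr\!\big((\partial_\ell L_{H_n})\hat{R}_n(\partial_\ell L_{H_n})\hat{R}_n^2\big),
\]
with the analogous formula for $\partial_\ell^3 f$ carrying a $\tfrac{1}{n(nr)^{3/2}}$ prefactor and three copies of $\partial_\ell L_{H_n}$. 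The crucial structural input is the explicit form of $L_{Q_\ell}$: since $Q_\ell\bone=(r-1)\ba_\ell$ we get $L_{Q_\ell}=(r-1)\diag(\ba_\ell)-Q_\ell=r\diag(\ba_\ell)-\ba_\ell\ba_\ell^\top$, which after relabelling the vertices so that $e_\ell=\{1,\dots,r\}$ equals $\begin{pmatrix} rI_r-J_r & 0\\ 0 & 0\end{pmatrix}=r\,\Pi_\ell$, where $\Pi_\ell$ is an orthogonal projection of rank $r-1$. Hence $\|L_{Q_\ell}\|_{\op}=r$, $\|L_{Q_\ell}\|_F=r\sqrt{r-1}$ and $\sum_{i,j}|(L_{Q_\ell})_{ij}|=2r(r-1)$. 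Plugging these, together with $\|\hat{R}_n\|_{\op}\le v^{-1}$ and $\max_{i,j}|(\hat{R}_n^2)_{ij}|\le v^{-2}$ (the latter obtained, as in Proposition~\ref{prop:bulk_universality}, from the spectral decomposition of $\tfrac1{\sqrt{nr}}L_{H_n}(\bx)$), into the trace expressions above via Cauchy--Schwarz and submultiplicativity of the operator norm, produces bounds on $\lambda_2(f)$ and $\lambda_3(f)$ of the orders appearing in \eqref{eq:upper_bound_univ_laplacian_orig}, which completes the argument.

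I expect the main obstacle to be the bookkeeping of exponents: carrying the correct powers of $r$, $n$ and $N$ through the three derivative bounds, which forces one to use the operator-norm estimate $\|L_{Q_\ell}\|_{\op}=r$ rather than the cruder entrywise or Frobenius estimates wherever possible, and to exploit that $L_{Q_\ell}=r\Pi_\ell$ is a scaled low-rank projection — this is what keeps the $r$-dependence small enough for Assumption~\ref{ass:tail_of_entries_variation} to suffice, in contrast with the stronger Assumption~\ref{ass:tail_of_entries} needed for $n^{-1/2}H_n$. A secondary technical point is the entrywise resolvent bound $\max_{i,j}|(\hat{R}_n^2)_{ij}|\le v^{-2}$, but this transfers verbatim since $\tfrac1{\sqrt{nr}}L_{H_n}(\bx)$ is real symmetric, so its eigenvector matrix is orthogonal and the estimate of Proposition~\ref{prop:bulk_universality} applies unchanged.
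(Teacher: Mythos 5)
Your proposal follows the paper's proof essentially verbatim: the same Chatterjee swapping applied to $\Re \hat{G}_n$ and $\Im \hat{G}_n$, the same resolvent-derivative identities with prefactors $(n\sqrt{nr})^{-1}$, $2(n^2r)^{-1}$ and $6(n(nr)^{3/2})^{-1}$, and the same structural input $\partial_\ell L_{H_n}=\frac{1}{\sqrt N}\big(r\,\diag(\ba_\ell)-\ba_\ell\ba_\ell^\top\big)$, whose block form $rI_r-J_r$ gives exactly the operator, Frobenius and entrywise bounds you list, fed into Cauchy--Schwarz as in Proposition~\ref{prop:bulk_universality}. One caveat: these estimates yield $\lambda_2(\hat{G}_n)\le 4\max(v^{-3},v^{-4})\,\frac{r^{3}}{n^{2}N}$ (the second-derivative bound is $\frac{2r^{2}}{n^{2}Nv^{3}}$ and the squared first derivative is $\frac{4r^{3}}{n^{3}Nv^{4}}$), i.e.\ a factor $r^{3}$ rather than the $r^{3/2}$ displayed in \eqref{eq:upper_bound_univ_laplacian_orig}, so your expectation that the low-rank structure of $L_{Q_\ell}$ lands you exactly on the stated exponent is not realized by this route; the paper's own appendix computation ends with the same $r^{3}/(n^{2}N)$, so this mismatch is between the proposition's statement and its proof rather than a defect of your argument.
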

\begin{proposition}\label{prop:bulk_universality_laplacian}
Let $z = u + iv \in \bbC_+$. For $\bx = (x_1, \ldots, x_{M})^\top$, consider the functions $H_n(\bx) = \frac{1}{\sqrt{N}}\sum_{\ell = 1}^M x_{\ell} Q_{\ell}$, $\tilde{L}_{H_n}(\bx) \equiv \tilde{L}_{H_n(\bx)} = \frac{\diag(H_n(\bx) \bone)}{r - 1} - H_n(\bx)$, $\tilde{R}_n(\bx) = (\frac{1}{\sqrt{n}}\tilde{L}_{H_n}(\bx) - z I)^{-1}$ and $\tilde{G}_n(\bx) = \frac{1}{n} \Tr \tilde{R}_n(\bx)$. Let $\bY = (Y_1, Y_2, \ldots, Y_M)^\top$ and $\bZ = (Z_1, Z_2, \ldots, Z_M)^\top$, where the $Y_{\ell}$'s are independent random variables with zero mean and unit variance satisfying Assumption~\ref{ass:tail_of_entries} and  the $Z_{\ell}$'s are i.i.d. standard Gaussians. Then, we have for any $K > 0$,
\begin{align}
\label{eq:upper_bound_univ_laplacian} \nonumber
    |\bbE(\tilde{G}_n&(\mathbf{Y})) - \bbE(\tilde{G}_n(\mathbf{Z}))| \\ \nonumber
    &\le 4 \max(v^{-3}, v^{-4}) \frac{r^4}{n^2 N} \sum_{\ell = 1}^M \bigg[ \bbE[Y^2_{\ell}\ind(|Y_{\ell}| > K)] + \bbE[Z^2_{\ell}\ind(|Z_{\ell}| > K)] \bigg] \\
    &\quad + 12 \max(v^{-6}, v^{-\frac{9}{2}}, v^{-4}) \frac{r^6}{n^{5/2} N^{3/2}} \sum_{\ell = 1}^M \bigg[ \bbE[|Y_{\ell}|^3\ind(|Y_{\ell}| \le K)] + \bbE[|Z_{\ell}|^3\ind(|Z_{\ell}| \le K)] \bigg].
\end{align}
\end{proposition}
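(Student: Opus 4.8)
The plan is to run the proof of Proposition~\ref{prop:bulk_universality} essentially verbatim, with the adjacency resolvent replaced by the Laplacian resolvent $\tilde{R}_n$. Set $f(\bx) := \tilde{G}_n(\bx) = \frac{1}{n}\Tr\tilde{R}_n(\bx)$. Since $H_n(\bx)$ is symmetric, so is $\tilde{L}_{H_n}(\bx) = \frac{1}{r-1}\diag(H_n(\bx)\bone) - H_n(\bx)$; hence $\|\tilde{R}_n(\bx)\|_{\op} \le v^{-1}$ and, via the spectral decomposition, $|(\tilde{R}_n^2(\bx))_{ij}| \le v^{-2}$ --- precisely the two resolvent estimates used in Proposition~\ref{prop:bulk_universality}. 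As there, $f$ is $\bbC$-valued but $\max\{|\partial_{x_\ell}^q\Re f|, |\partial_{x_\ell}^q\Im f|\} \le |\partial_{x_\ell}^q f|$, so it suffices to bound $\lambda_2(f)$ and $\lambda_3(f)$ and then apply Theorem~\ref{thm:chatterjee} to $\Re f$ and $\Im f$ separately, using $|\bbE f(\bY) - \bbE f(\bZ)| \le |\bbE\Re f(\bY) - \bbE\Re f(\bZ)| + |\bbE\Im f(\bY) - \bbE\Im f(\bZ)|$. Thus everything reduces to controlling the first three $x_\ell$-derivatives of $f$.

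The single new ingredient is the $x_\ell$-derivative of $\tilde{L}_{H_n}(\bx)$. From $H_n(\bx) = N^{-1/2}\sum_\ell x_\ell Q_\ell$ we get $\partial_{x_\ell}H_n(\bx) = N^{-1/2}Q_\ell$, and since $\ba_\ell^\top\bone = r$ and $\diag(\ba_\ell)\bone = \ba_\ell$,
\[
    Q_\ell\bone = \big(\ba_\ell\ba_\ell^\top - \diag(\ba_\ell)\big)\bone = (r - 1)\,\ba_\ell ,
\]
so the diagonal correction in $\tilde{L}$ collapses cleanly:
\[
    \partial_{x_\ell}\tilde{L}_{H_n}(\bx) = \frac{1}{r - 1}\diag\big((\partial_{x_\ell}H_n(\bx))\bone\big) - \partial_{x_\ell}H_n(\bx) = \frac{1}{\sqrt N}\,\tilde{Q}_\ell, \qquad \tilde{Q}_\ell := \diag(\ba_\ell) - Q_\ell = 2\diag(\ba_\ell) - \ba_\ell\ba_\ell^\top .
\]
Relabelling vertices so that $e_\ell = \{1,\dots,r\}$, $\tilde{Q}_\ell$ becomes block-diagonal with nonzero block $2I_r - J_r$, whose eigenvalues are $2 - r$ (once) and $2$ (with multiplicity $r - 1$); its diagonal entries are $1$ on $e_\ell$ and its off-diagonal entries are $-1$ on $e_\ell \times e_\ell$. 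Hence
\[
    \|\tilde{Q}_\ell\|_F = r, \qquad \|\tilde{Q}_\ell\|_{\op} = \max\{r - 2, 2\} \le r, \qquad \sum_{1 \le i, j \le n}\big|(\tilde{Q}_\ell)_{ij}\big| = r^2 ,
\]
the analogues of $\|Q_\ell\|_F = \sqrt{r(r-1)}$, $\|Q_\ell\|_{\op} = r - 1$, $\sum_{ij}|(Q_\ell)_{ij}| = r(r-1)$ used in the adjacency case, and still of the same orders $r$ and $r^2$.

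From here the argument is a mechanical repetition of the chain of Cauchy--Schwarz and Schatten-norm estimates in Proposition~\ref{prop:bulk_universality}. Differentiating $(\frac{1}{\sqrt n}\tilde{L}_{H_n}(\bx) - zI)\tilde{R}_n(\bx) = I$ expresses $\partial_{x_\ell}^q\tilde{R}_n$ as an alternating product of $\tilde{R}_n$'s and $q$ copies of $\partial_{x_\ell}\tilde{L}_{H_n} = N^{-1/2}\tilde{Q}_\ell$, so that $\partial_{x_\ell}f = -n^{-3/2}\Tr(\partial_{x_\ell}\tilde{L}_{H_n}\,\tilde{R}_n^2)$ and similarly for $q = 2, 3$. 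Bounding exactly as in \eqref{eq:partial_1}--\eqref{eq:partial_3} (using $|(\tilde{R}_n^2)_{ij}| \le v^{-2}$ for the first derivative, and $\|\tilde{R}_n\|_{\op} \le v^{-1}$ with Cauchy--Schwarz for the higher ones) yields
\[
    \|\partial_{x_\ell}f\|_\infty \le \frac{r^2}{v^2\, n^{3/2}\sqrt N}, \qquad \|\partial_{x_\ell}^2 f\|_\infty \le \frac{2r^2}{v^3\, n^2 N}, \qquad \|\partial_{x_\ell}^3 f\|_\infty \le \frac{6r^3}{v^4\, n^{5/2}N^{3/2}},
\]
and hence, after trivially enlarging exponents via $r \ge 2$ and $n \ge 1$,
\[
    \lambda_2(f) \le 2\max(v^{-3}, v^{-4})\,\frac{r^4}{n^2 N}, \qquad \lambda_3(f) \le 6\max(v^{-6}, v^{-9/2}, v^{-4})\,\frac{r^6}{n^{5/2}N^{3/2}} .
\]
Feeding these into Theorem~\ref{thm:chatterjee} (applied to $\Re f$ and $\Im f$) produces \eqref{eq:upper_bound_univ_laplacian}.

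I do not expect a genuine obstacle: the proof is structurally identical to that of Proposition~\ref{prop:bulk_universality}, and the only point requiring care is the norm bookkeeping for $\tilde{Q}_\ell$ --- in particular checking that the extra diagonal term $2\diag(\ba_\ell)$ does not inflate $\|\tilde{Q}_\ell\|_{\op}$ or $\|\tilde{Q}_\ell\|_F$ beyond order $r$. This is exactly what makes the resulting coefficients $r^4/(n^2 N)$ and $r^6/(n^{5/2}N^{3/2})$, and therefore the relevant hypothesis (Assumption~\ref{ass:tail_of_entries}), the same as for the adjacency matrix, in contrast with the Laplacian $(nr)^{-1/2}L_{H_n}$ of Proposition~\ref{prop:bulk_universality_laplacian_orig}, where the diagonal dominates and smaller powers of $r$ --- hence the weaker Assumption~\ref{ass:tail_of_entries_variation} --- suffice.
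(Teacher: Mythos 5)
Your proposal is correct and follows essentially the same route as the paper's own proof: compute $\partial_{x_\ell}\tilde L_{H_n} = N^{-1/2}(2\diag(\ba_\ell)-\ba_\ell\ba_\ell^\top)$, record $\|\tilde Q_\ell\|_F = r$, $\|\tilde Q_\ell\|_{\op}=\max\{2,r-2\}\le r$, $\sum_{i,j}|(\tilde Q_\ell)_{ij}|=r^2$, and then repeat the resolvent-derivative and Cauchy--Schwarz estimates of Proposition~\ref{prop:bulk_universality} to get the same bounds on $\lambda_2,\lambda_3$ before invoking Theorem~\ref{thm:chatterjee} on the real and imaginary parts. The derivative bounds and final constants you obtain coincide with those in the paper's argument, so no gap remains.
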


\begin{proof}[Proof of Theorem~\ref{thm:univ}]
In the notation of Proposition~\ref{prop:bulk_universality} we have $S_{\mubar_{n^{-1/2}H_n(\bY)}} = \bbE G_n(\bY)$ and $S_{\mubar_{n^{-1/2}H_n(\bZ)}} = \bbE G_n(\bZ)$. We take $K = \varepsilon K_n$ where $K_n = \frac{\sqrt{nN}}{r^4}$. Observe that
\begin{align*}
    \sum_{\ell = 1}^M \bigg[ \bbE[|Y_{\ell}|^3\ind(|Y_{\ell}| \le \varepsilon K_n)] &+ \bbE[|Z_{\ell}|^3\ind(|Z_{\ell}| \le \varepsilon K_n)] \bigg] \\
    &\le \varepsilon K_n \sum_{\ell = 1}^M \bigg[ \bbE[|Y_{\ell}|^2\ind(|Y_{\ell}| \le K_n)] + \bbE[|Z_{\ell}|^2\ind(|Z_{\ell}| \le K_n)] \bigg] \\ &\le 2 \varepsilon K_n M.
\end{align*}
Therefore the second term in \eqref{eq:upper_bound_univ} is at most
\begin{align*}
    \frac{2 r^3 (r - 1)^3 \varepsilon K_n M}{n^{5/2}N^{3/2}} \le \frac{2 \varepsilon K_n r^4}{\sqrt{n N}}.
\end{align*}
Therefore, with our choice of $K_n$, the second term in \eqref{eq:upper_bound_univ} is at most $2\varepsilon$. Now, i.i.d standard Gaussian random variables satisfy Assumption~\ref{ass:tail_of_entries}. By Assumption~\ref{ass:tail_of_entries} on $(Y_{\ell})_{1 \le \ell \le M}$, the first term in \eqref{eq:upper_bound_univ} goes to zero.

The proof for the Laplacian matrix $n^{-1/2}\tilde{L}_{H_n}$ follows from Proposition~\ref{prop:bulk_universality_laplacian} in an identical fashion.

To deal with $(nr)^{-1/2} L_{H_n}$, it follows from Proposition~\ref{prop:bulk_universality_laplacian_orig} via the same argument as above that we want
\begin{equation}\label{eq:cond_lap_orig}
    \lim_{n \to \infty} \frac{r^{3/2}}{n^2 N} \sum_{\ell = 1}^M \bbE[|Y_{\ell}|^2 \ind(|Y_{\ell}| > \varepsilon K_n')] = 0,
\end{equation}
where $K_n' = \frac{\sqrt{nN}}{r^{5/2}}$. This is precisely Assumption~\ref{ass:tail_of_entries_variation}.
\end{proof}
\subsection{Proofs of Theorems~\ref{thm:eesd_gaussian} and \ref{thm:eesd_gaussian_laplacian}}
We first state and prove a perturbation inequality that will be used repeatedly in the proofs. For this we will use the $1$-Wasserstein metric. Recall that for $q \ge 1$, the $q$-Wasserstein distance between probability measures $\mu$ and $\nu$ on $\bbR$ is given by 
\[
    d_{W_q}(\mu, \nu) := \inf_{\pi} \int |x - y|^q \, d\pi(x, y), 
\]
where the infimum is over all coupings $\pi$ of $\mu$ and $\nu$. By the Kantorovich-Rubinstein duality (see, e.g., \cite[Chap. 11]{dudley2018real}), one also has the following representation for $d_{W_1}$:
\[
    d_{W_1}(\mu, \nu) = \sup_{\|f\|_{\Lip} \le 1} \bigg\{\bigg|\int f \,d\mu - \int f \,d\nu\bigg|\bigg\}.
\]
From this it is evident that $d_{\BL} \le d_{W_1}$.
\begin{lemma}\label{lem:wass-1-bound}
    Let $A$ and $B$ be $n \times n$ Hermitian random matrices. Then
    \begin{equation}\label{eq:wass-1-bound}
        d_{W_1}(\mubar_A, \mubar_B) \le \frac{\bbE \|A - B\|_F}{\sqrt{n}} \le \bigg[\frac{\bbE \|A - B\|_F^2}{n}\bigg]^{1/2}.
    \end{equation}
\end{lemma}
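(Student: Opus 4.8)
The plan is to derive the lemma from the Hoffman--Wielandt inequality by conditioning on a realisation of the matrices and then taking expectations.

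First I would invoke the Kantorovich--Rubinstein dual representation of $d_{W_1}$ recalled just above the statement: it reduces matters to bounding $|\int f\,d\mubar_A - \int f\,d\mubar_B|$ by $\bbE\|A-B\|_F/\sqrt n$ uniformly over $1$-Lipschitz $f : \bbR \to \bbR$. Since $\int f\,d\mubar_A = \bbE\int f\,d\mu_A$ and likewise for $B$, the triangle inequality (i.e.\ $|\bbE Z|\le\bbE|Z|$) lets me pull the absolute value inside the expectation, so it suffices to prove the pathwise bound $|\int f\,d\mu_A - \int f\,d\mu_B| \le \|A-B\|_F/\sqrt n$ for each realisation of $(A,B)$ and each \emph{fixed} $1$-Lipschitz $f$ --- the point being that $f$ is held the same across realisations, which is why I phrase the argument as a uniform pathwise estimate rather than through a single optimal coupling of the two EESDs.

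For the pathwise bound, I would order the eigenvalues of $A$ and of $B$ decreasingly, $\lambda_1(A)\ge\cdots\ge\lambda_n(A)$ and $\lambda_1(B)\ge\cdots\ge\lambda_n(B)$, and use the diagonal coupling $\frac1n\sum_{i=1}^n \delta_{(\lambda_i(A),\lambda_i(B))}$ of $\mu_A$ and $\mu_B$: Lipschitzness of $f$ gives $|\int f\,d\mu_A - \int f\,d\mu_B| \le \frac1n\sum_{i=1}^n|\lambda_i(A)-\lambda_i(B)|$, and Cauchy--Schwarz upgrades this to $\frac1{\sqrt n}\big(\sum_{i=1}^n|\lambda_i(A)-\lambda_i(B)|^2\big)^{1/2}$. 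The Hoffman--Wielandt inequality for Hermitian matrices then bounds $\sum_{i=1}^n|\lambda_i(A)-\lambda_i(B)|^2$ by $\|A-B\|_F^2$, which closes the pathwise estimate; taking expectations yields the first inequality in \eqref{eq:wass-1-bound}, and the second is the trivial Jensen bound $\bbE\|A-B\|_F\le(\bbE\|A-B\|_F^2)^{1/2}$. There is no genuine obstacle here --- the whole content sits in the classical Hoffman--Wielandt inequality (which also appears among the auxiliary matrix-analysis facts in Appendix~\ref{sec:aux}), and everything else is convexity bookkeeping.
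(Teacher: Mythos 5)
Your proposal is correct and follows essentially the same route as the paper: Kantorovich--Rubinstein duality to move from the EESDs to a pathwise estimate, then Hoffman--Wielandt (your explicit decreasing-order coupling plus Cauchy--Schwarz is exactly the paper's ``$d_{W_1}\le d_{W_2}$ plus Hoffman--Wielandt'' step), and finally Jensen for the second inequality. The only difference is presentational --- you fix a $1$-Lipschitz $f$ and bound pathwise rather than passing through $\bbE[d_{W_1}(\mu_A,\mu_B)]$ --- which is logically equivalent.
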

\begin{proof}
Using the Kantorovich-Rubinstein duality,
\begin{align*}
    d_{W_1}(\mubar_{A}, \mubar_{B}) &= \sup_{\|f\|_{\Lip} \le 1} \bigg|\int f \, d\mubar_{A} - \int f \, d\mubar_{B}\bigg| \\
    &= \sup_{\|f\|_{\Lip} \le 1} \bigg|\bbE \int f \, d\mu_{A} - \bbE \int f \, d\mu_{B}\bigg| \\
    &\le \bbE \sup_{\|f\|_{\Lip} \le 1} \bigg|\int f \, d\mu_{A} - \int f \, d\mu_{B}\bigg| \\
    &= \bbE [d_{W_1}(\mu_{A}, \mu_{B})].
\end{align*}
The first inequality in \eqref{eq:wass-1-bound} follows now from the fact that $d_{W_1} \le d_{W_2}$ and the Hoffman-Wielandt inequality (Lemma~\ref{lem:hoffman-wielandt}). The final inequality in \eqref{eq:wass-1-bound} is just Jensen's inequality applied to the function $x \mapsto \sqrt{x}$.
\end{proof}

Recall the L\'{e}vy distance between two probability measures $\mu$ and $\nu$ on $\bbR$:
\[
    d_{\levy}(\mu, \nu) := \inf\{\epsilon >0 : F_{\mu}(x - \epsilon) - \epsilon \le G_{\nu}(x) \le  F_{\mu}(x + \epsilon) + \epsilon , \forall x \in \bbR\}.
\]
It is well known that (see, e.g., \cite{gibbs2002choosing})
\[
    d_{\BL} \le 2 d_{\levy} \le 2 d_{\KS}.
\]
We will make use of the inequality $d_{\BL} \le 2 d_{\KS}$ several times below.

\begin{proof}[Proof of Theorem~\ref{thm:eesd_gaussian}]
Consider i.i.d. standard Gaussian random variables $U$, $(V_i)_{1 \le i \le n}$ and an independent GOE random matrix $n^{-1/2} Z_n$ (i.e. $(Z_{ij})_{1 \le i < j \le n}$ are i.i.d. standard Gaussians and $(Z_{ii})_{1 \le i \le n}$ is another independent collection of i.i.d. Gaussians with variance $2$). Consider a symmetric matrix $G_n$ with entries
\[
    G_{n,ij} = \alpha_n U + \beta_n (V_i + V_j) + \theta_n Z_{ij}.
\]
Then we may write $G_n$ as follows:
\begin{equation}\label{eq:GHAM_decomp}
    G_n = \alpha_n U_n \bone_n \bone_n^\top + \beta_n (\bV \bone_n^\top + \bone_n \bV^\top) + \theta_n Z_n,
\end{equation}
where $\bone_n$ is an $n \times n$ vector of $1$'s, $\bV = (V_i)_{1\le \ell \le M}$. We shall choose $\alpha_n$, $\beta_n$, $\theta_n$ in such a way that 
\begin{equation}\label{eq:GHAM_decomp_w/o_diag}
    G'_n = G_n- \diag({G_n})
\end{equation}
yields the same covariance structure as a GHAM. To this end, note that for $i \neq j$ and $i' \neq j'$,
\begin{align*}
   \Cov(G'_{n,ij}, G'_{n,i'j'}) &= \begin{cases}
        \alpha_n^2 & \text{if } |\{i, j\} \cap \{i', j'\}| = 0, \\
        \alpha_n^2 + \beta_n^2 & \text{if } |\{i, j\} \cap \{i', j'\}| = 1, \\
        \alpha_n^2 + 2 \beta_n^2 + \theta_n^2 & \text{if } |\{i, j\} \cap \{i', j'\}| = 2.
   \end{cases}
\end{align*}
In addition, 
\[
    \Var(G_{n,ii}) = \alpha_n^2 + 4 \beta_n^2 + 2\theta_n^2.
\]
To match this with our Gaussian GHAM, we must set
\begin{align*}
    \alpha_n^2 &= \rho_n, \\
    \alpha_n^2 + \beta_n^2 &= \gamma_n, \\
    \alpha_n^2 + 2\beta_n^2 + \theta_n^2 &= 1,
\end{align*}
which yields
\[
    \alpha_n = \sqrt{\rho_n}, \quad \beta_n = \sqrt{\gamma_n - \rho_n}, \quad \theta_n = \sqrt{1 - 2 \gamma_n + \rho_n}. 
\]
As $r/n \to c$, we have $\gamma_n \to c$ and $\rho_n \to c^2$. It follows that $\theta_n \to 1 -c$. With these parameter choices, the entries of $G'_n$ and $H_n$ have the same joint distribution. Now we write
\begin{align*}
    d_{\BL}(\mubar_{n^{-1/2} H_n}, \nu_{\sc, 1 - c}) &\le d_{\BL} (\mubar_{n^{-1/2} H_n}, \mubar_{n^{-1/2} G'_n}) + d_{\BL}(\mubar_{n^{-1/2} G'_n}, \mubar_{n^{-1/2} G_n)}) \\
    &\qquad\qquad + d_{\BL}(\mubar_{n^{-1/2} G_n}, \mubar_{n^{-1/2} \theta_n Z_n}) + d_{\BL} (\mubar_{n^{-1/2} \theta_n Z_n}, \mubar_{n^{-1/2} (1-c) Z_n}) \\
    &\qquad\qquad\qquad\qquad + d_{\BL}( \mubar_{n^{-1/2} (1-c) Z_n}, \nu_{\sc, 1 - c}).
\end{align*}
Now we bound each of the terms on the right hand side. The first term vanishes since the entries of $H_n$ and $G'_n$ have the same joint distribution.

For the second term, we use the inequality $d_{\BL} \le d_{W_1}$ and then appeal to Lemma~\ref{lem:wass-1-bound} to get
\[
    d_{\BL}(\mubar_{n^{-1/2} G'_n}, \mubar_{n^{-1/2} G_n}) \le d_{W_1}(\mubar_{n^{-1/2} G'_n}, \mubar_{n^{-1/2} G_n}) \le \frac{(\bbE \|\diag(G_n)\|_F^2)^{1/2}}{n}  = \sqrt{\frac{\Var(G_{n, 11})}{n}} \le \sqrt{\frac{4}{n}},
\]
where the last inequality holds because $\Var(G_{n, 11}) = \alpha_n^2 + 4 \beta_n^2 + 2 \theta_n^2 = 1 + 2 \beta_n^2 + \theta_n^2 \le 4$.

For the third term, using the fact that $\rank(G_n - \theta_n Z_n) \le 2$, we obtain
\begin{align*}
    d_{\BL}(\mubar_{n^{-1/2} G_n}, \mubar_{n^{-1/2} \theta_n Z_n}) & \le 
    2d_{\KS}(\mubar_{n^{-1/2} G_n}, \mubar_{n^{-1/2} \theta_n Z_n}) \\
    &= \sup_x |\bbE F_{\mu_{n^{-1/2}G_n}}(x)-\bbE F_{\mu_{n^{-1/2}\theta_n Z_n}}(x)| \\
    &\le \bbE \sup_x |F_{\mu_{n^{-1/2}G_n}}(x)- F_{\mu_{n^{-1/2}\theta_n Z_n}}(x)| \\
    &= \bbE [ d_{\KS}(\mu_{n^{-1/2} G_n}, \mu_{n^{-1/2} \theta_n Z_n}) ] \\
    &\le \bbE \bigg[\frac{\rank(n^{-1/2}(G_n - \theta_n Z_n))}{n} \bigg] \quad \text{(using Lemma~\ref{lem:rank_ineq})}\\
    &\le \frac{2}{n}.
\end{align*}
The fourth term is bounded using the same strategy as the second term:
\[
    d_{\BL} (\mubar_{n^{-1/2} \theta_n Z_n}, \mubar_{n^{-1/2} (1-c) Z_n}) \le |\theta_n -(1-c)| \frac{1}{n}(\mathbb{E}\|Z_n\|^2_F)^{1/2} = |\theta_n-(1-c)|.
\]
Since $\theta_n$, $(1-c)$ are bounded away form 0 and bounded above by $1$, using the Lipschitzness of the function $x \mapsto \sqrt{x}$ away from $0$,
\[
    |\theta_n-(1-c)| =\bigg|\sqrt{1-2\bigg(\frac{r}{n}\bigg) +\bigg(\frac{r}{n}\bigg)^2 +O\bigg(\frac{1}{n}\bigg)}-\sqrt{1-2c+c^2}\bigg| = O\bigg(\max\bigg\{\bigg|\frac{r}{n}-c\bigg|,\frac{1}{n}\bigg\}\bigg).
\]
Therefore
\[
    d_{\BL} (\mubar_{n^{-1/2} \theta_n Z_n}, \mubar_{n^{-1/2} (1-c) Z_n}) \le O\bigg(\max\bigg\{\bigg|\frac{r}{n}-c\bigg|,\frac{1}{n}\bigg\}\bigg).
\]
For the final term, using Theorem~1.6 of \cite{gotze2018local} we get that
\[
    d_{\BL}(\mubar_{n^{-1/2} (1-c) Z_n}, \nu_{\sc, 1 - c}) \le 2 d_{\KS}( \mubar_{n^{-1/2} (1-c) Z_n}, \nu_{\sc, 1 - c}) \le \frac{C}{n}.
\]
for some absolute constant $C > 0$. This concludes the proof.
\end{proof}

Now we restate Theorem 2.1 of \cite{pastur2000law} in our notation for orthogonally invariant matrices (see the discussion on page 280 of \cite{pastur2000law}).
\begin{theorem}\protect{\cite[Theorem 2.1]{pastur2000law}}\label{thm:pastur_free_conv}
    Let $B_n = U_n^\top F_{1,n} U_n + V_n^\top F_{2,n} V_n$, where $F_{1,n}$ and $F_{2,n}$ are (potentially) random $n \times n$ symmetric matrices with arbitrary distributions and $U_n, V_n$ are orthogonal matrices uniformly distributed over the orthogonal group $\cO(n)$ according to the Haar measure. We assume that the matrices $F_{1, n}, F_{2, n}, U_n, V_n$ are independent. Assume that the ESDs $\mu_{n^{-1/2} F_{r,n}}, r = 1, 2,$ converge weakly in probability as $n \to \infty$ to non-random probability measures $\mu_r, r = 1, 2$, respectively and that 
    \[
        \sup_n \int |\lambda| \, \mubar_{n^{-1/2} F_{r, n}}(d\lambda) < \infty
    \]
    for at least one value of $r \in \{1, 2\}$. Then $\mu_{n^{-1/2} B_n}$ converges weakly in probability to $\mu_1 \boxplus \mu_2$, the free additive convolution of the measures $\mu_1$ and $\mu_2$.
\end{theorem}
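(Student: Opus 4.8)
The plan is to follow the Stieltjes-transform strategy of Pastur and Vasilchuk \cite{pastur2000law}. Write $\hat F_{r,n} := n^{-1/2} F_{r,n}$, $\hat A_n := U_n^\top \hat F_{1,n} U_n$, $\hat C_n := V_n^\top \hat F_{2,n} V_n$, so that $n^{-1/2} B_n = \hat A_n + \hat C_n$; put $R(z) := (\hat A_n + \hat C_n - zI)^{-1}$ and $G_n(z) := \tfrac1n \Tr R(z)$. Since pointwise convergence of Stieltjes transforms on $\bbC_+$ is equivalent to weak convergence of the underlying measures \cite{anderson2010introduction}, it suffices to show that $G_n(z) \to S_{\mu_1 \boxplus \mu_2}(z)$ in probability for each $z \in \bbC_+$, and I would split this into (a) \emph{self-averaging}, $G_n(z) - \bbE G_n(z) \to 0$, and (b) \emph{convergence of the mean}, $\bbE G_n(z) \to S_{\mu_1 \boxplus \mu_2}(z)$. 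A preliminary reduction conditions on $F_{1,n}, F_{2,n}$ and replaces them by bounded deterministic matrices: since the ESDs converge in probability and $\tfrac1n\Tr|\hat F_{r_0,n}|$ is bounded in $L^1$ for the distinguished index $r_0$ (WLOG $r_0 = 2$), one works on an event of probability $\to 1$ on which $\mu_{n^{-1/2}F_{r,n}}$ is close to $\mu_r$ and $\tfrac1n\Tr|\hat F_{2,n}|$ is bounded, and one additionally caps the eigenvalues of each $\hat F_{r,n}$ at a large continuity point $\pm K$ of $\mu_r$, which moves only $o(n)$ eigenvalues and hence changes $G_n$ and the ESDs negligibly. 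For (a): conditionally on (the bounded) $F_{1,n}, F_{2,n}$, the map $(U,V)\mapsto G_n(z)$ on $\cO(n)\times\cO(n)$ is, by the resolvent identity, Lipschitz with constant $O_K\big(n^{-1}(\Im z)^{-2}\big)$ in the Hilbert--Schmidt metric; since Haar measure on $\cO(n)$ satisfies a logarithmic Sobolev inequality with constant $O(1/n)$, Herbst's argument gives $\Var(G_n(z)\mid F) = O_K(n^{-2})$, hence $G_n(z) - \bbE[G_n(z)\mid F]\to 0$ a.s., and averaging plus the reduction gives (a).

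For (b) I would derive an \emph{approximate subordination system}. Set
\[
    \omega_1^{(n)}(z) := z - \frac{\tfrac1n\bbE\Tr[\hat C_n R(z)]}{\bbE G_n(z)}, \qquad \omega_2^{(n)}(z) := z - \frac{\tfrac1n\bbE\Tr[\hat A_n R(z)]}{\bbE G_n(z)} .
\]
From $\Tr[(\hat A_n + \hat C_n - z)R] = n$ one obtains the \emph{exact} identity $\omega_1^{(n)}(z) + \omega_2^{(n)}(z) - z = -1/\bbE G_n(z)$, and from $\hat A_n R = I + zR - \hat C_n R$ one sees that $\tfrac1n\bbE\Tr[\hat A_n R]$ is controlled by $\tfrac1n\bbE\Tr[\hat C_n R]$, which is bounded because $\mu_2$ has finite first moment; an imaginary-part (Ward-type) estimate gives $\Im\,\omega_r^{(n)}(z)\ge \Im z$, so each $\omega_r^{(n)}$ maps $\bbC_+$ into itself and is locally bounded. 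The heart of the matter is the pair of approximate fixed-point relations
\[
    \bbE G_n(z) = S_{\mu_{n^{-1/2}F_{1,n}}}\!\big(\omega_1^{(n)}(z)\big) + o(1), \qquad \bbE G_n(z) = S_{\mu_{n^{-1/2}F_{2,n}}}\!\big(\omega_2^{(n)}(z)\big) + o(1),
\]
uniformly on compact subsets of $\bbC_+$.

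To prove the first relation, I would expand $R = (\hat A_n - \omega_1^{(n)})^{-1} - (\hat A_n - \omega_1^{(n)})^{-1} D\,R$ with $D := \hat C_n - (z - \omega_1^{(n)})I$; by the definition of $\omega_1^{(n)}$ one has $\bbE\,\tfrac1n\Tr[DR] = 0$, and conjugating by $U_n$ reduces the claim to the \emph{decoupling estimate} $\bbE\,\tfrac1n\Tr\big[(\hat A_n - \omega_1^{(n)})^{-1} D R\big] = o(1)$. This is where the Haar randomness of $U_n$ enters: integrating by parts against the rotation vector fields that leave Haar measure on $\cO(n)$ invariant (the Pastur--Vasilchuk differential-equation method), combined with the variance bounds from step (a), yields the factorisation $\tfrac1n\bbE\Tr[(U_n^\top \Gamma U_n)M] = \tfrac1n\Tr\Gamma \cdot \tfrac1n\bbE\Tr M + O(n^{-1/2})$ for $M = DR$, which annihilates the error term. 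The second relation follows symmetrically, using $\hat A_n R = I + zR - \hat C_n R$ so that no first-moment hypothesis on $\mu_1$ is needed.

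Finally I would pass to the limit: $(\bbE G_n, \omega_1^{(n)}, \omega_2^{(n)})$ is a locally bounded family of analytic functions on $\bbC_+$, so by Montel's theorem every subsequence admits a locally uniformly convergent sub-subsequence with limit $(f,\omega_1,\omega_2)$; since $\mu_{n^{-1/2}F_{r,n}}\to\mu_r$ weakly we have $S_{\mu_{n^{-1/2}F_{r,n}}}\to S_{\mu_r}$ locally uniformly, so $(f,\omega_1,\omega_2)$ satisfies the exact subordination system $f = S_{\mu_1}(\omega_1) = S_{\mu_2}(\omega_2)$, $\omega_1 + \omega_2 - z = -1/f$. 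By uniqueness of the subordination functions for free additive convolution (cf.\ \cite{biane1997free, pastur2000law}) this forces $f = S_{\mu_1\boxplus\mu_2}$ regardless of the subsequence, so $\bbE G_n \to S_{\mu_1\boxplus\mu_2}$ pointwise on $\bbC_+$, i.e.\ $\mubar_{n^{-1/2}B_n}\to\mu_1\boxplus\mu_2$ weakly, and together with (a), $\mu_{n^{-1/2}B_n}\to\mu_1\boxplus\mu_2$ weakly in probability. I expect the decoupling estimate --- making the integration by parts on $\cO(n)$ rigorous and uniform in $z$ on compacts of $\bbC_+$ --- to be the main obstacle; the truncation and measurability bookkeeping needed to reduce the possibly unbounded, possibly random matrices $F_{r,n}$ (with only one of them assumed to have bounded mean spectral modulus) to the bounded deterministic case is the secondary technical point.
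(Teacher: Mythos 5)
This statement is not proved in the paper at all: it is quoted, in the paper's notation, from Pastur and Vasilchuk \cite{pastur2000law} (their Theorem~2.1) and then used as a black box in the proof of Theorem~\ref{thm:eesd_gaussian_laplacian}. So there is no in-paper argument to compare against; what you have written is in effect a reconstruction of the original Pastur--Vasilchuk proof, and as a roadmap it is faithful: truncation/conditioning to reduce to bounded deterministic $F_{r,n}$, self-averaging of $G_n(z)$ by concentration over the Haar-distributed rotations, the exact identity $\omega_1^{(n)}+\omega_2^{(n)}-z=-1/\bbE G_n$ (which checks out, as do your resolvent expansion and the vanishing of $\bbE\,\tfrac1n\Tr[DR]$), approximate subordination relations obtained by integration by parts against Haar measure, and a Montel-plus-uniqueness argument identifying the limit with $S_{\mu_1\boxplus\mu_2}$.

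That said, as a standalone proof the proposal is an outline rather than a complete argument, and the gap is exactly where you say it is: the decoupling estimate $\tfrac1n\bbE\Tr[(U_n^\top\Gamma U_n)M]=\tfrac1n\Tr\Gamma\cdot\tfrac1n\bbE\Tr M+o(1)$ for resolvent-type $M$, uniformly on compacts of $\bbC_+$, together with the bound $\Im\,\omega_r^{(n)}(z)\ge\Im z$, is asserted rather than derived; making the Haar integration by parts rigorous (and uniform in $z$) is the bulk of \cite{pastur2000law}. Two smaller points to tighten if you were to write this out: $\cO(n)$ is disconnected, so the log-Sobolev/Herbst step should be run on $\mathrm{SO}(n)$ (or conditionally on each component), and Pastur--Vasilchuk obtain the needed variance bounds by their differentiation method rather than LSI, though either route works; and since the first-moment hypothesis $\sup_n\int|\lambda|\,d\mubar_{n^{-1/2}F_{r,n}}<\infty$ is assumed for only one index $r$, the two subordination relations cannot be treated symmetrically --- the asymmetric bookkeeping you hint at via $\hat A_nR=I+zR-\hat C_nR$ has to be carried through explicitly to control $\tfrac1n\bbE\Tr[\hat A_nR]$ without a moment assumption on $\mu_1$.
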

\begin{proof}[Proof of Theorem~\ref{thm:eesd_gaussian_laplacian}]
    To prove (i), we use the low rank representation of a Gaussian GHAM. We only give the details for $\tilde{L}_{H_n}$. The stated result for $L_{H_n}$ follows in a similar manner.

    First note that
    \[
        \tilde{L}_{G_n} - \tilde{L}_{G_n'} = -\frac{r - 2}{r - 1} \diag(G_n),
    \]
    and thus
    \[
        d_{W_1}(\mubar_{n^{-1/2}\tilde{L}_{G'_n}}, \mubar_{n^{-1/2}\tilde{L}_{G_n}})^2 \le \frac{(r - 2)^2}{n^2 (r - 1)^2}\bbE \|\diag(G_n)\|_F^2 = O\bigg(\frac{1}{n}\bigg).
    \]
    Therefore it is enough to consider $\tilde{L}(G_n)$. Since the map $X \mapsto \tilde{L}_X$ is linear, we have
    \[
        \tilde{L}_{G_n} = \tilde{L}_{P_n} + \theta_n \tilde{L}_{Z_n}.
    \]
    where $P_n = \alpha_n U \bone\bone^\top + \beta_n (\bone \bV^\top + \bV \bone^\top)$. Further, using the rank-inequality (Lemma~\ref{lem:rank_ineq}), it is clear that we may focus on
    \[
        X_n = \frac{\diag(P_n \bone)}{r - 1} + \theta \tilde{L}_{Z_n}.
    \]
    Now
    \[
        \diag(P_n \bone) = \diag(n \alpha_n U \bone + n\beta_n \bar{V} \bone + n\beta_n \bV).
    \]
    Consider the matrix
    \[
        \tilde{X}_n = \frac{n\beta_n}{r - 1} \diag(\bV) + \theta_n L_{Z_n}.
    \]
    Using Lemma~\ref{lem:wass-1-bound}, we see that
    \[
        d_{W_1}(\mubar_{n^{-1/2} \tilde{X}_n}, \mubar_{n^{-1/2} X_n})^2 \le \frac{1}{n^2(r - 1)^2} \bbE\|\diag(n\alpha_n U \bone + n\beta_n \bar{V} \bone)\|_F^2 = \frac{n}{(r - 1)^2} \bbE(\alpha_n U + \beta_n \bar{V})^2.
    \]
    Now
    \[
        \bbE(\alpha_n U + \beta_n \bar{V})^2 = \Var (\alpha_n U + \beta_n \bar{V}) = \alpha_n^2 + \beta_n^2 / n  = O(1/n^2).
    \]
    Hence
    \[
        d_{W_1}(\mubar_{n^{-1/2} \tilde{X}_n}, \mubar_{n^{-1/2} X_n}) = O_P\bigg(\frac{1}{\sqrt{n}}\bigg).
    \]
    In fact, since $\beta_n = \frac{\sqrt{r - 2}}{\sqrt{n}} +O(1/n)$ and $\theta_n = O(1/n)$, it is enough to consider (again by Hoffman-Wielandt) the matrix
    \[
        \hat{W}_n = \frac{\sqrt{n(r - 2)}}{r - 1} \diag(\bV) + \tilde{L}_{Z_n} = \frac{\sqrt{n(r - 2)}}{r - 1} \diag(\bV) + \frac{\diag(Z_n\bone)}{r - 1} - Z_n.
    \]
    By a modification of the proof of Lemma~4.12 of \cite{bryc2005spectral} one can instead consider the matrix (see Lemma~\ref{lem:BDJ_replacement} for a precise statement of this replacement and its proof)
    \[
        \tilde{W}_n = \frac{\sqrt{n(r - 2)}}{r - 1} \diag(\bV) + \frac{\sqrt{n} \, \diag(\bg)}{r - 1} + Z_n, 
    \]
    where $\bg$ is a vector of i.i.d. standard Gaussians independent of $\bV$ and $Z_n$. Recall that $n^{-1/2} Z_n$ is a GOE random matrix which is orthogonally invariant.
    It is clear using the strong law of large numbers that the ESD of
    \[
        n^{-1/2} \bigg[\frac{\sqrt{n(r - 2)}}{r - 1} \diag(\bV) + \frac{\sqrt{n} \diag(\bg)}{r - 1}\bigg]
    \]
    converges weakly almost surely to $\nu_{\Gauss, \frac{1}{r - 1}}$. Now, using Theorem~\ref{thm:pastur_free_conv} it follows that the ESD of
    $n^{-1/2} \bigg[ \frac{\sqrt{n(r - 2)}}{r - 1} \diag(\bV) + \frac{\sqrt{n} \, \diag(\bg)}{r - 1} + Z_n \bigg]$ convereges weakly in probability to $\nu_{\Gauss, \frac{1}{r - 1}} \boxplus \nu_{\sc}$. This proves (i) (the weak convergence of the EESD follows from the in-probability weak convergence of the ESD via the dominated convergence theorem).
    
    Now we prove (ii). First consider the matrix $\tilde{L}_{H_n}$. We invoke Lemma~\ref{lem:wass-1-bound}:
    \[
        d_{W_1}(\mubar_{n^{-1/2}\tilde{L}_{H_n}}, \mubar_{n^{-1/2} H_n})^2 \le \frac{1}{n} \bbE \bigg\|\frac{\diag(n^{-1/2}H_n \bone)}{r - 1}\bigg\|_F^2 = \frac{1}{(r - 1)^2 n^2} \bbE \sum_i \bigg(\sum_j H_{n, ij}\bigg)^2. 
    \]
    Since
    \[
        \bbE \sum_i \bigg(\sum_j H_{n, ij}\bigg)^2 = \sum_i \Var\bigg(\sum_j H_{n, ij}\bigg) = \sum_i (n + n(n - 1) \gamma_n) = O(n^2 r),
    \]
    we conclude that
    \[
        d_{W_1}(\mubar_{n^{-1/2}\tilde{L}_{H_n}}, \mubar_{n^{-1/2} H_n}) = O\bigg(\frac{1}{\sqrt{r}}\bigg).
    \]
    Since $r \to \infty$, it follows that $\mubar_{n^{-1/2} \tilde{L}_{H_n}}$ and $\mubar_{n^{-1/2} H_n}$ have the same weak limit, namely $\nu_{\sc, (1 - c)^2}$.

    Now we consider the usual Laplacian $L_{H_n}$. Again by Lemma~\ref{lem:wass-1-bound},
    \begin{align*}
        d_{W_1}(\mubar_{(nr)^{-1/2} L_{G_n}}, \mubar_{(nr)^{-1/2} \diag(G_n \bone)}) &\le \frac{1}{n} \bbE \|(nr)^{-1/2} G_n\|_F^2 \\
        &=\frac{1}{n^2 r} \sum_{i, j} \bbE [G_{n, ij}^2] \\
        &=O(r^{-1}).
    \end{align*}
    Now
    \[
        G_n \bone = n \alpha_n U \bone + n \beta_n \bar{V} \bone + n\beta_n \bV + \theta_n Z_n \bone.
    \]
    Another application of Lemma~\ref{lem:wass-1-bound} gives
    \begin{align*}
        d_{W_1}(\mubar_{(nr)^{-1/2} \diag(G_n \bone)}, \mubar_{(nr)^{-1/2} \diag(n\alpha_n U \bone + n\beta_n \bV)}) &\le \frac{1}{n} \bbE \|(nr)^{-1/2} \diag(n\beta_n \bar{V} \bone + \theta_n Z_n \bone)\|_F^2 \\
        &=\frac{1}{n^2 r} \sum_{i} \bbE [n\beta_n \bar{V} + \theta_n \sum_{j} Z_{ij}]^2 \\
        &=\frac{1}{n r} \Var\bigg(n\beta_n \bar{V} + \theta_n \sum_{j} Z_{1j}\bigg) \\
        &=\frac{1}{n r} (n \beta_n^2 + n \theta_n^2) \\
        &=O(r^{-1}).
    \end{align*}
    Finally note that
    \[
        (nr)^{-1/2} \diag(n \alpha_n U\bone + n\beta_n \bV) = \diag\bigg(\sqrt{\frac{r}{n}} U \bone + \bV\bigg) = \sqrt{\frac{r}{n}} U I + \diag(\bV).
    \]
    Since the first matrix on the right hand side above is orthogonally invariant, using Theorem~\ref{thm:pastur_free_conv} we conclude that
    \[
        \mubar_{(nr)^{-1/2} \diag(n \alpha_n U\bone + n\beta_n \bV)} \xrightarrow{d} \nu_{\Gauss, c} \boxplus \nu_{\Gauss, 1}.
    \]
    This completes the proof.
\end{proof}

\subsection{Proof of Theorem~\ref{thm:lip_estimate}}
Theorem~\ref{thm:lip_estimate} follows from Lemmas~\ref{lem:lipschitzness} and \ref{lem:lipschitz_const_estimate} below.
\begin{lemma}\label{lem:lipschitzness}
We have the following.
\begin{enumerate}
    \item [(i)] The map $\bx \mapsto n^{-1/2}H_n(\bx)$ is $\Delta_{n, r}$-Lipschitz, where
    \[
        \Delta_{n, r}^2 = \frac{1}{nN} \sum_{s = 0}^r (s^2 - s) \binom{r}{s}\binom{n - r}{r - s}.
    \]
    \item (ii) The map $\bx \mapsto (nr)^{-1/2} L_{H_n(\bx)}$ is $\Gamma_{n,r}$-Lipschitz, where
    \[
        \Gamma_{n,r}^2 = \frac{1}{n r N} \sum_{s = 0}^r ((r^2 - 2r) s + s^2) \binom{r}{s}\binom{n - r}{r - s}. 
    \]
    \item (iii) The map $\bx \mapsto n^{-1/2}\tilde{L}_{H_n(\bx)}$ is $\Xi_{n,r}$-Lipschitz, where
    \[
        \Xi_{n,r}^2 = \frac{1}{nN} \sum_{s = 0}^r s^2 \binom{r}{s}\binom{n - r}{r - s}. 
    \]
\end{enumerate}
\end{lemma}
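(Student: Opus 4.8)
The plan is to exploit the fact that all three maps are \emph{linear} in $\bx$: since $H_n(\bx) = \frac{1}{\sqrt N}\sum_{\ell=1}^M x_\ell Q_\ell$ and $X\mapsto L_X$, $X\mapsto\tilde L_X$ are linear, each map has the form $T(\bx) = \sum_{\ell=1}^M x_\ell B_\ell$ for fixed symmetric matrices $B_\ell$: namely $B_\ell = \frac{1}{\sqrt{nN}}Q_\ell$ for $T_1$, $B_\ell = \frac{1}{\sqrt{nrN}}L_{Q_\ell}$ for $T_2$, and $B_\ell = \frac{1}{\sqrt{nN}}\tilde L_{Q_\ell}$ for $T_3$. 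For such a linear map the Lipschitz constant from $(\bbR^M,\|\cdot\|_2)$ to $(\Mat_n(\bbR),\|\cdot\|_F)$ equals its operator norm, and expanding the square gives
\[
    \|T(\bx)-T(\by)\|_F^2 = \Big\|\sum_\ell (x_\ell-y_\ell)B_\ell\Big\|_F^2 = (\bx-\by)^\top G\,(\bx-\by),
\]
where $G$ is the positive semidefinite Gram matrix with entries $G_{\ell\ell'} = \Tr(B_\ell^\top B_{\ell'})$. Hence the squared Lipschitz constant is $\|G\|_{\op} = \lambda_{\max}(G)$, and the whole lemma reduces to bounding this eigenvalue.

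First I would observe that $G_{\ell\ell'}$ depends on $e_\ell,e_{\ell'}$ only through $s := |e_\ell\cap e_{\ell'}|$, say $G_{\ell\ell'} = g(s)$ with $g(s)\ge 0$, because $\Tr(B_\ell^\top B_{\ell'})$ is built symmetrically out of $\ba_\ell,\ba_{\ell'}$. Since the hyperedge set of the complete $r$-uniform hypergraph is invariant under $\cS_n$, all row sums of $G$ coincide; and for any symmetric matrix with nonnegative entries one has $\|G\|_{\op}\le\max_\ell\sum_{\ell'}|G_{\ell\ell'}|$, which here is exactly the common row sum (in fact equality holds, since $G$ is PSD with constant row sums and hence has $\bone$ as a Perron eigenvector, but only the upper bound is needed). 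Counting the hyperedges $e_{\ell'}$ with a prescribed overlap $|e_\ell\cap e_{\ell'}| = s$ — there are $\binom{r}{s}\binom{n-r}{r-s}$ of them — yields
\[
    \|G\|_{\op} \le \sum_{s=0}^r \binom{r}{s}\binom{n-r}{r-s}\,g(s).
\]

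It then remains to evaluate $g(s)$ in the three cases, a short computation using $\ba_\ell^\top\ba_{\ell'} = s$, $\Tr(\diag(\ba_\ell)\diag(\ba_{\ell'})) = s$, and $\Tr(\ba_\ell\ba_\ell^\top\diag(\ba_{\ell'})) = \Tr(\diag(\ba_\ell)\ba_{\ell'}\ba_{\ell'}^\top) = s$. Writing $Q_\ell = \ba_\ell\ba_\ell^\top-\diag(\ba_\ell)$ and expanding gives $\Tr(Q_\ell Q_{\ell'}) = s^2 - s$, so $g(s) = (s^2-s)/(nN)$ and the bound becomes $\Delta_{n,r}^2$. For the Laplacians one first checks $\diag(Q_\ell\bone) = (r-1)\diag(\ba_\ell)$, whence $L_{Q_\ell} = r\diag(\ba_\ell)-\ba_\ell\ba_\ell^\top$ and $\tilde L_{Q_\ell} = 2\diag(\ba_\ell)-\ba_\ell\ba_\ell^\top$; expanding the corresponding Frobenius inner products with the same identities gives $\Tr(L_{Q_\ell}L_{Q_{\ell'}}) = (r^2-2r)s + s^2$ and $\Tr(\tilde L_{Q_\ell}\tilde L_{Q_{\ell'}}) = s^2$, which after dividing by $nrN$ and $nN$ produce the row sums $\Gamma_{n,r}^2$ and $\Xi_{n,r}^2$. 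The main (though modest) obstacle is precisely this last bit of bookkeeping — getting the two Laplacian identities and the trace expansions exactly right — together with the verification that $G$ has constant row sums; everything else is formal once the reduction to the Gram matrix is in place.
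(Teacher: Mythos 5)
Your proposal is correct and follows essentially the same route as the paper's proof: you derive the same kernels $\Tr(Q_\ell Q_{\ell'}) = s^2 - s$, $\Tr(\hat{Q}_\ell \hat{Q}_{\ell'}) = (r^2-2r)s + s^2$, $\Tr(\tilde{Q}_\ell \tilde{Q}_{\ell'}) = s^2$ (with $\hat{Q}_\ell = r\,\diag(\ba_\ell)-\ba_\ell\ba_\ell^\top$, $\tilde{Q}_\ell = 2\,\diag(\ba_\ell)-\ba_\ell\ba_\ell^\top$) and the same overlap count $\binom{r}{s}\binom{n-r}{r-s}$. The only difference is cosmetic: where the paper bounds the quadratic form on each fixed-overlap slice by two applications of Cauchy--Schwarz, you bound the operator norm of the (entrywise nonnegative, symmetric) Gram matrix by its constant row sum, and the two arguments produce identical constants.
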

\begin{proof}
We first prove (i). Note that
\begin{align*}
    \Tr(Q_{\ell}Q_{\ell'}) &= \Tr((\ba_{\ell}\ba_{\ell}^\top - \diag(\ba_{\ell})) (\ba_{\ell'}\ba_{\ell'}^\top - \diag(\ba_{\ell'})) \\
    &= \Tr(\ba_{\ell}\ba_{\ell}^\top \ba_{\ell'}\ba_{\ell'}^\top - \diag(\ba_{\ell})\ba_{\ell'}\ba_{\ell'}^\top - \ba_{\ell}\ba_{\ell}^\top\diag(\ba_{\ell'}) + \diag(\ba_{\ell}) \diag(\ba_{\ell'})) \\
    &= (\ba_{\ell}^\top \ba_{\ell'})^2 - \ba_{\ell}^\top \ba_{\ell'} \\
    &= |e_\ell \cap e_{\ell'}|^2 - |e_\ell \cap e_{\ell'}|.
\end{align*}
Therefore
\begin{align*}
    \|n^{-1/2}H_n(\bx) - n^{-1/2}H_n(\bx')\|_F^2 &= \frac{1}{nN} \sum_{\ell, \ell'} (\bx - \bx')_{\ell} (\bx - \bx')_{\ell'} \,\, \Tr(Q_{\ell}Q_{\ell'}) \\
    &= \frac{1}{nN} \sum_{\ell, \ell'} (\bx - \bx')_{\ell} (\bx - \bx')_{\ell'} \,\, (|e_\ell \cap e_{\ell'}|^2 - |e_\ell \cap e_{\ell'}|) \\
    &= \frac{1}{nN} \sum_{s = 0}^r (s^2 - s) \sum_{\ell, \ell' : |e_\ell \cap e_{\ell'}|= s} (\bx - \bx')_{\ell} (\bx - \bx')_{\ell'}.
\end{align*}
Now two applications of Cauchy-Schwartz gives
\begin{align}\label{eq:qform_bound} \nonumber
    \sum_{\ell, \ell' : |e_\ell \cap e_{\ell'}|= s} (\bx - \bx')_{\ell} (\bx - \bx')_{\ell'} &= \sum_{\ell = 1}^M (\bx - \bx')_{\ell} \sum_{\ell' : |e_\ell \cap e_{\ell'}|= s}(\bx - \bx')_{\ell'} \\ \nonumber
    &\le \bigg(\sum_{\ell = 1}^M (\bx - \bx')_{\ell}^2\bigg)^{1/2} \bigg(\sum_{\ell = 1}^M \bigg(\sum_{\ell' : |e_\ell \cap e_{\ell'}|= s}(\bx - \bx')_{\ell'}\bigg)^2\bigg)^{1/2} \\ \nonumber
    &\le \|\bx - \bx'\| \bigg[\sum_{\ell = 1}^M \binom{r}{s}\binom{n - r}{r - s} \sum_{\ell' : |e_\ell \cap e_{\ell'}|= s}(\bx - \bx')_{\ell'}^2\bigg]^{1/2} \\ \nonumber
    &= \|\bx - \bx'\| \bigg[\binom{r}{s}\binom{n - r}{r - s} \sum_{\ell' = 1}^M (\bx - \bx')_{\ell'}^2 \sum_{\ell : |e_\ell \cap e_{\ell'}|= s} 1 \bigg]^{1/2} \\
    &= \binom{r}{s}\binom{n - r}{r - s} \|\bx - \bx'\|^2.
\end{align}
Therefore
\[
    \|n^{-1/2}H_n(\bx) - n^{-1/2}H_n(\bx')\|_F^2 \le \bigg[\frac{1}{nN} \sum_{s = 0}^r (s^2 - s) \binom{r}{s}\binom{n - r}{r - s} \bigg] \|\bx - \bx'\|^2 = \Delta_{n, r}^2 \|\bx - \bx'\|^2.
\]
This completes the proof of (i).

Let us now prove (ii). Notice that
\begin{align*}
    L_{H_n(\bx)} &= \diag(H_n(\bx) \bone) - H_n(\bx) \\
    &= \frac{1}{\sqrt{N}} \sum_{\ell} \bx_{\ell} \, \diag( Q_{\ell} \bone) - \frac{1}{\sqrt{N}} \sum_{\ell} \bx_{\ell} Q_{\ell} \\
    &= \frac{1}{\sqrt{N}} \sum_{\ell} (r - 1) \bx_{\ell} \, \diag( \ba_{\ell}) -  \frac{1}{\sqrt{N}} \sum_{\ell} \bx_{\ell} Q_{\ell} \\
    &= \frac{1}{\sqrt{N}} \sum_{\ell} \bx_{\ell} ((r - 1)\diag( \ba_{\ell}) -  Q_{\ell}) \\
    &= \frac{1}{\sqrt{N}} \sum_{\ell} \bx_{\ell} (r \, \diag( \ba_{\ell}) - \ba_{\ell} \ba_{\ell}^\top) \\
     &= \frac{1}{\sqrt{N}} \sum_{\ell} \bx_{\ell} \hat{Q}_{\ell},
\end{align*}
where $\hat{Q}_{\ell} := r \, \diag( \ba_{\ell}) - \ba_{\ell} \ba_{\ell}^\top $.
Note that 
\begin{align*}
    \Tr(\hat{Q}_{\ell} \hat{Q}_{\ell'} ) &= \Tr((r \, \diag( \ba_{\ell}) - \ba_{\ell} \ba_{\ell}^\top) (r \, \diag( \ba_{\ell'}) - \ba_{\ell'} \ba_{\ell'}^\top )) \\
    &= \Tr(r^2 \, \diag(\ba_{\ell})\diag(\ba_{\ell'}) - r \, \diag(\ba_{\ell}) \ba_{\ell'} \ba_{\ell'}^\top - r \diag( \ba_{\ell'}) \ba_{\ell} \ba_{\ell}^\top + \ba_{\ell} \ba_{\ell}^\top\ba_{\ell'} \ba_{\ell'}^\top ) \\
    &= r^2 \, \ba_{\ell}^\top\ba_{\ell'} - r \, \ba_{\ell}^\top\ba_{\ell'} - r \,  \ba_{\ell}^\top\ba_{\ell'} + (\ba_{\ell}^\top\ba_{\ell'})^2 \\
    &= (r^2 - 2r) \ba_{\ell}^\top\ba_{\ell'} + (\ba_{\ell}^\top\ba_{\ell'})^2 \\
    &= (r^2 - 2r) |e_\ell \cap e_{\ell'}| + |e_\ell \cap e_{\ell'}|^2.
\end{align*}
Now using the bound \eqref{eq:qform_bound}, we get
\begin{align*}
    \|(nr)^{-1/2} L_{H_n(\bx)} - (nr)^{-1/2} L_{H_n(\bx')}\|_F^2 &= \frac{1}{n r N} \sum_{\ell, \ell'} (\bx - \bx')_{\ell} (\bx - \bx')_{\ell'} \,\, \Tr(\hat{Q}_{\ell}\hat{Q}_{\ell'}) \\
    &= \frac{1}{n r N} \sum_{\ell, \ell'} (\bx - \bx')_{\ell} (\bx - \bx')_{\ell'} \,\, [(r^2 - 2r) |e_\ell \cap e_{\ell'}| + |e_\ell \cap e_{\ell'}|^2 ]  \\
    &= \frac{1}{n r N} \sum_{s = 0}^r ((r^2 - 2r) s + s^2) \sum_{\ell, \ell' : |e_\ell \cap e_{\ell'}|= s} (\bx - \bx')_{\ell} (\bx - \bx')_{\ell'} \\
    & \le \bigg[\frac{1}{n r N} \sum_{s = 0}^r ((r^2 - 2r) s + s^2) \binom{r}{s}\binom{n - r}{r - s} \bigg] \|\bx - \bx'\|^2 \\
    &= \Gamma_{n,r}^2 \|\bx - \bx'\|^2. 
\end{align*}
This proves (ii). Finally, we prove (iii). Notice that
\begin{align*}
    \tilde{L}_{H_n(\bx)} &= \frac{\diag( H_n(\bx) \bone)}{r - 1} - H_n(\bx) \\
    &= \frac{1}{\sqrt{N}(r - 1)} \sum_{\ell} \bx_{\ell} \, \diag( Q_{\ell} \bone) - \frac{1}{\sqrt{N}} \sum_{\ell} \bx_{\ell} Q_{\ell} \\
    &=  \frac{1}{\sqrt{N}} \sum_{\ell} \bx_{\ell} \diag( \ba_{\ell}) -  \frac{1}{\sqrt{N}} \sum_{\ell} \bx_{\ell} Q_{\ell} \\
    &= \frac{1}{\sqrt{N}} \sum_{\ell} \bx_{\ell} ( \diag( \ba_{\ell}) -  Q_{\ell}) \\
    &= \frac{1}{\sqrt{N}} \sum_{\ell} \bx_{\ell} (2 \, \diag( \ba_{\ell}) - \ba_{\ell} \ba_{\ell}^\top) \\
     &= \frac{1}{\sqrt{N}} \sum_{\ell} \bx_{\ell} \tilde{Q}_{\ell},
\end{align*}
where $\tilde{Q}_{\ell} := 2 \, \diag( \ba_{\ell}) - \ba_{\ell} \ba_{\ell}^\top $.
Note that 
\begin{align*}
    \Tr(\tilde{Q}_{\ell} \tilde{Q}_{\ell'} ) &= \Tr((2 \, \diag( \ba_{\ell}) - \ba_{\ell} \ba_{\ell}^\top) (2 \, \diag( \ba_{\ell'}) - \ba_{\ell'} \ba_{\ell'}^\top )) \\
    &= \Tr(4 \, \diag(\ba_{\ell})\diag(\ba_{\ell'}) - 2 \, \diag(\ba_{\ell}) \ba_{\ell'} \ba_{\ell'}^\top - 2 \diag( \ba_{\ell'}) \ba_{\ell} \ba_{\ell}^\top + \ba_{\ell} \ba_{\ell}^\top\ba_{\ell'} \ba_{\ell'}^\top ) \\
    &= 4 \, \ba_{\ell}^\top\ba_{\ell'} - 2 \, \ba_{\ell}^\top\ba_{\ell'} - 2 \,  \ba_{\ell}^\top\ba_{\ell'} + ( \ba_{\ell}^\top\ba_{\ell'} )^2 \\
    &= ( \ba_{\ell}^\top\ba_{\ell'} )^2 \\
    &= |e_\ell \cap e_{\ell'}|^2.
\end{align*}
Now using the bound \eqref{eq:qform_bound}, we get
\begin{align*}
    \|n^{-1/2} \tilde{L}_{H_n(\bx)} - n^{-1/2} \tilde{L}_{H_n(\bx')}\|_F^2 &= \frac{1}{nN} \sum_{\ell, \ell'} (\bx - \bx')_{\ell} (\bx - \bx')_{\ell'} \,\, \Tr(\tilde{Q}_{\ell}\tilde{Q}_{\ell'}) \\
    &= \frac{1}{nN} \sum_{\ell, \ell'} (\bx - \bx')_{\ell} (\bx - \bx')_{\ell'} \,\, |e_\ell \cap e_{\ell'}|^2  \\
    &= \frac{1}{nN} \sum_{s = 0}^r s^2 \sum_{\ell, \ell' : |e_\ell \cap e_{\ell'}|= s} (\bx - \bx')_{\ell} (\bx - \bx')_{\ell'} \\
    & \le \bigg[\frac{1}{nN} \sum_{s = 0}^r s^2 \binom{r}{s}\binom{n - r}{r - s} \bigg] \|\bx - \bx'\|^2 \\
    &= \Xi_{n,r}^2 \|\bx - \bx'\|^2. 
\end{align*}
This proves (iii).
\end{proof}

\begin{lemma}\label{lem:lipschitz_const_estimate}
    For any $r \ge 2$, we have the following estimates.
    \begin{enumerate}
        \item [(i)] $\Delta_{n, r}^2 \le \frac{r^2}{n}$.
        \item [(ii)] $\Gamma_{n,r}^2 \le r$.
        \item [(iii)] $\Xi_{n,r}^2 \le \frac{r}{r - 1} + \frac{r^2}{n}$.
    \end{enumerate}
\end{lemma}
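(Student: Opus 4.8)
The plan is to recognise that $\frac{\binom{r}{s}\binom{n-r}{r-s}}{\binom{n}{r}}$ is exactly the probability mass function at $s$ of the random variable $S := |e \cap e'|$, where $e$ is a fixed $r$-subset of $[n]$ and $e'$ is a uniformly random $r$-subset; that is, $S$ is hypergeometric with parameters $(n,r,r)$. Together with the ratio $\frac{M}{N} = \frac{\binom{n}{r}}{\binom{n-2}{r-2}} = \frac{n(n-1)}{r(r-1)}$, each of the three quantities can be rewritten as an expectation of a low-degree polynomial in $S$:
\[
    \Delta_{n,r}^2 = \frac{n-1}{r(r-1)}\,\bbE[S(S-1)], \qquad \Xi_{n,r}^2 = \frac{n-1}{r(r-1)}\,\bbE[S^2], \qquad \Gamma_{n,r}^2 = \frac{n-1}{r^2(r-1)}\big((r^2-2r)\,\bbE[S] + \bbE[S^2]\big).
\]

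Next I would compute the needed moments of $S$ via the standard factorial-moment identity for the hypergeometric law: writing $(m)_j := m(m-1)\cdots(m-j+1)$ and $S = \sum_{i \in e} X_i$ with $X_i = \ind(i \in e')$, one has $\bbE[(S)_j] = (r)_j \cdot \bbP(j \text{ fixed vertices all lie in } e') = (r)_j \cdot \frac{(r)_j}{(n)_j}$. In particular $\bbE[S] = \frac{r^2}{n}$ and $\bbE[S(S-1)] = \frac{r^2(r-1)^2}{n(n-1)}$, hence $\bbE[S^2] = \frac{r^2(r-1)^2}{n(n-1)} + \frac{r^2}{n}$. Substituting these in, the computations collapse to exact values: $\Delta_{n,r}^2 = \frac{r(r-1)}{n}$; for $\Gamma_{n,r}^2$ the combination $(r^2-2r)\bbE[S] + \bbE[S^2]$ telescopes to $\frac{r^2(r-1)^2}{n-1}$, so $\Gamma_{n,r}^2 = r-1$; and $\Xi_{n,r}^2 = \frac{r(r-1)}{n} + \frac{r(n-1)}{(r-1)n}$. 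The stated bounds then follow from the trivial estimates $r(r-1)\le r^2$, $r-1\le r$, and $\frac{n-1}{n}\le 1$.

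There is no genuine obstacle here: the lemma is essentially an exact evaluation presented as an inequality. The only point deserving care is the algebra in the $\Gamma_{n,r}^2$ case, where one should first check that $(r^2-2r)\bbE[S] + \bbE[S^2] = \frac{r^2(r-1)^2}{n} + \frac{r^2(r-1)^2}{n(n-1)}$ before simplifying. One could also bypass the probabilistic route entirely and verify the three identities directly from Vandermonde-type binomial identities (for instance, $\sum_s s(s-1)\binom{r}{s}\binom{n-r}{r-s} = r(r-1)\sum_s \binom{r-2}{s-2}\binom{n-r}{r-s} = r(r-1)\binom{n-2}{r-2}$), but the hypergeometric interpretation makes the bookkeeping cleaner and the telescoping transparent.
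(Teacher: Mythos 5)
Your proposal is correct and follows essentially the same route as the paper: both recognise $\binom{r}{s}\binom{n-r}{r-s}/\binom{n}{r}$ as the hypergeometric$(n,r,r)$ mass function and reduce the three sums to low moments of that variable, using $\binom{n}{r}/N = n(n-1)/(r(r-1))$. Your use of exact factorial moments (giving $\Delta_{n,r}^2 = r(r-1)/n$ and $\Gamma_{n,r}^2 = r-1$ exactly) is a mild sharpening of the paper's bound via the variance, but not a different argument.
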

\begin{proof}
Let $X \sim \Hyper(n, r, r)$. Then
\begin{align*}
    \bbE[X] &= \frac{r^2}{n}, \\
    \Var(X) &= \frac{r^2}{n} \cdot \frac{n - r}{n} \cdot \frac{n - r}{n - 1} = \frac{r^2}{n} \bigg(1 - \frac{r}{n}\bigg)^2 \bigg(1 - \frac{1}{n}\bigg)^{-1} \le \frac{r^2}{n} \bigg(1 - \frac{r}{n}\bigg)^2.
\end{align*}
We note that
\begin{align*}
\frac{1}{\binom{n}{r}}\sum_{s = 0}^r (s^2 - s) \binom{r}{s}\binom{n - r}{r - s} &=\bbE(X^2 - X) \\
&= \Var(X) + (\bbE[X])^2 - \bbE[X] \\
&\le \frac{r^2}{n} \bigg(1 - \frac{r}{n}\bigg)^2 + \frac{r^2}{n} \bigg(\frac{r^2}{n} - 1\bigg) \\
&= \frac{r^2}{n} \bigg(1 - \frac{2r}{n} + \frac{r^2}{n^2} + \frac{r^2}{n} - 1\bigg) \\
&= \frac{r^3}{n^2} \bigg(\frac{r}{n} + r - 2 \bigg) \\
&\le \frac{r^3(r - 1)}{n^2}.
\end{align*}
Therefore
\[
    \Delta_{n, r}^2 \le \frac{\binom{n}{r}}{n N} \cdot \frac{r^3(r - 1)}{n^2} = \frac{n (n-1)}{nr (r - 1)} \cdot \frac{r^3(r - 1)}{n^2} \le \frac{r^2}{n}.
\]
This proves (i). For (ii), note that
\begin{align*}
    r \Xi_{n, r}^2 &= \frac{\binom{n}{r}}{nN}\sum_{s = 0}^r (r - 1)^2 s \frac{\binom{r}{s}\binom{n - r}{r - s}}{\binom{n}{r}} + \Delta_{n, r}^2 \\
    &= \frac{\binom{n}{r}}{nN} (r - 1)^2 \bbE[X] +\Delta_{n, r}^2 \\
    &= \frac{(n - 1)}{r(r - 1)} (r - 1)^2 \frac{r^2}{n} + \Delta_{n, r}^2 \\
    &\le (n - 1) \frac{r^2}{n} + \frac{r^2}{n} \\
    &= r^2.
\end{align*}
Finally, for (iii), we have
\begin{align*}
    \Gamma_{n, r}^2 &= \frac{\binom{n}{r}}{nN}\sum_{s = 0}^r s \frac{\binom{r}{s}\binom{n - r}{r - s}}{\binom{n}{r}} + \Delta_{n, r}^2 \\
    &= \frac{\binom{n}{r}}{nN} \bbE[X] +\Delta_{n, r}^2 \\
    &= \frac{(n - 1)}{r(r - 1)} \frac{r^2}{n} + \Delta_{n, r}^2 \\
    &\le \frac{r}{r - 1} + \frac{r^2}{n}.
\end{align*}
This completes the proof.
\end{proof}
\begin{remark}
    It is clear from the proof of Lemma~\ref{lem:lipschitz_const_estimate} that in fact, $\Delta_{n, r} = \Theta\big(\frac{r}{\sqrt{n}}\big)$, $\Gamma_{n, r} = \Theta(\sqrt{r})$, and $ \Xi_{n, r} = \Theta\big(\sqrt{\frac{r}{(r - 1)} + \frac{r^2}{n}}\big)$.
\end{remark}
Now we will prove Corollary~\ref{cor:conc_esd}.
Henceforth we will use the notation $\langle f, \mu \rangle := \int f \, d\mu$ for brevity. The proof of the following result is standard (see \cite{guionnet2000concentration}).
\begin{lemma}\label{lem:lip_lsi}
Suppose that all the entries of $\bY = (Y_{\ell})_{1 \le \ell \le M}$ satisfy $\LSI_{\kappa}$ for some $\kappa > 0$. There are universal constants $C_j, \hat{C}_j, \tilde{C}_j > 0$, $j = 1, 2$, such that for any $1$-Lipschitz function $f$, we have for any $t > 0$, 
\begin{enumerate}
     \item [(i)] $\bbP(|\langle f, \mu_{n^{-1/2}H_n(\bY)} \rangle - \langle f, \mubar_{n^{-1/2}H_n(\bY)} \rangle| > t) \le C_1 \exp\big(-\frac{C_2 n^2 t^2}{\kappa r^2}\big)$;
    \item [(ii)] $\bbP(|\langle f, \mu_{(nr)^{-1/2} L_{H_n(\bY)}} \rangle - \langle f, \mubar_{(nr)^{-1/2} L_{H_n(\bY)}} \rangle| > t) \le \hat{C}_1 \exp\big(-\frac{\hat{C}_2 n t^2}{\kappa r}\big)$;
    \item [(ii)] $\bbP(|\langle f, \mu_{n^{-1/2}\tilde{L}_{H_n(\bY)}} \rangle - \langle f, \mubar_{n^{-1/2}\tilde{L}_{H_n(\bY)}} \rangle| > t) \le \tilde{C}_1 \exp\big(-\frac{\tilde{C}_2 \min\{n, n^2 / r^2\} t^2}{\kappa}\big)$.
\end{enumerate}
\end{lemma}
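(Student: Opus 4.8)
The plan is to combine the tensorization property of the logarithmic Sobolev inequality with the Lipschitz estimates of Theorem~\ref{thm:lip_estimate} and the classical Herbst argument. First I would observe that, since each coordinate $Y_\ell$ satisfies $\LSI_\kappa$, the product measure $\bigotimes_{\ell=1}^M \mathrm{Law}(Y_\ell)$ on $(\bbR^M, \|\cdot\|_2)$ also satisfies $\LSI_\kappa$ with the same constant $\kappa$, by tensorization of the log-Sobolev inequality (see, e.g., \cite{ledoux2006concentration}).

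Next, for a fixed $1$-Lipschitz $f : \bbR \to \bbR$ and any one of the matrix-valued maps $B_n(\cdot)$ in question, I would show that $F(\bx) := \langle f, \mu_{B_n(\bx)} \rangle$ is Lipschitz on $(\bbR^M, \|\cdot\|_2)$. Indeed, by the Hoffman--Wielandt inequality (Lemma~\ref{lem:hoffman-wielandt}) followed by Cauchy--Schwarz, for any two $n \times n$ Hermitian matrices $A, B$,
\[
    |\langle f, \mu_A \rangle - \langle f, \mu_B \rangle| \le \frac{1}{n}\sum_{i=1}^n |\lambda_i(A) - \lambda_i(B)| \le \frac{1}{\sqrt n}\,\|A - B\|_F,
\]
so $F$ inherits the Lipschitz constant of $B_n(\cdot)$ divided by $\sqrt n$. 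Plugging in Theorem~\ref{thm:lip_estimate} gives that $F$ is $\sigma$-Lipschitz with $\sigma = r/n$ when $B_n(\bx) = n^{-1/2}H_n(\bx)$, with $\sigma = \sqrt{r/n}$ when $B_n(\bx) = (nr)^{-1/2}L_{H_n(\bx)}$, and with $\sigma^2 = \frac{1}{n}\big(\tfrac{r}{r-1} + \tfrac{r^2}{n}\big)$ when $B_n(\bx) = n^{-1/2}\tilde L_{H_n(\bx)}$.

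Finally, by the Herbst argument, a measure on $\bbR^M$ satisfying $\LSI_\kappa$ concentrates every $\sigma$-Lipschitz function around its mean with a sub-Gaussian rate, $\bbP(|F - \bbE F| > t) \le 2\exp\!\big(-t^2/(2\kappa \sigma^2)\big)$. Since $\langle f, \mubar_{B_n(\bY)}\rangle = \bbE\langle f, \mu_{B_n(\bY)}\rangle$ by definition of the EESD, substituting the three values of $\sigma$ yields (i) and (ii) immediately (with absolute constants $2$ and $1/2$), while for (iii) one uses $\frac{r}{r-1}\le 2$ for $r\ge 2$ to get $\sigma^2 \le \tfrac{2}{n} + \tfrac{r^2}{n^2} \asymp \max\{\tfrac1n,\tfrac{r^2}{n^2}\} = 1/\min\{n, n^2/r^2\}$, giving the exponent as stated. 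The argument is entirely standard; the only point requiring any care is this last step, namely recognising that the two competing terms in the squared Lipschitz constant of the $\tilde L_{H_n}$ map combine into the factor $\min\{n, n^2/r^2\}$.
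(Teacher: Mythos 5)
Your proposal is correct and follows essentially the same route as the paper: Hoffman--Wielandt plus Cauchy--Schwarz to show $\bx \mapsto \langle f, \mu_{B_n(\bx)}\rangle$ is Lipschitz with constant given by Theorem~\ref{thm:lip_estimate} divided by $\sqrt{n}$, tensorization of $\LSI_{\kappa}$, and then sub-Gaussian concentration of Lipschitz functions (Herbst), including the same observation that for $\tilde{L}_{H_n}$ the squared Lipschitz constant is of order $\max\{1/n, r^2/n^2\} = 1/\min\{n, n^2/r^2\}$.
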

\begin{proof}
For any $1$-Lipschitz function $f$,
\[
    |\langle f, \mu_{n^{-1/2}H_n(\bx)} \rangle - \langle f, \mu_{n^{-1/2}H_n(\bx')} \rangle| \le \frac{1}{n}\sum_{i = 1}^n |f(\lambda_i) - f(\lambda_i')|  \le \frac{1}{n}\sum_{i = 1}^n |\lambda_i - \lambda_i'|.
\]
Using the Cauchy-Schwartz and Hoffman-Wielandt inequalities, the last quantity can be bounded by
\begin{align*}
    \bigg(\frac{1}{n}\sum_{i = 1}^n (\lambda_i - \lambda_i')^2\bigg)^{1/2}&\le \bigg(\frac{1}{n} \|n^{-1/2} H_n(\bx) - n^{-1/2} H_n(\bx')\|_F^2 \bigg)^{1/2} \\
    &\le \frac{\Delta_{n, r}}{\sqrt{n}} \|\bx - \bx'\| \\
    &\le \frac{r}{n} \|\bx - \bx'\|.
\end{align*}
Thus the map $\bx \mapsto \langle f, \mu_{n^{-1/2}H_n(\bx)} \rangle$ is $\frac{r}{n}$-Lipschitz.

If all entries $(Y_{\ell})_{1 \le \ell \le M}$ satisfy $\LSI_{\kappa}$, then by the tensorization property of LSI, their joint law also satisfies $\LSI_{\kappa}$ (see, e.g., \cite{anderson2010introduction}). Then by concentration of Lipschitz functions for measures satisfying LSI, we have
\begin{align*}
    \bbP(|\langle f, \mu_{n^{-1/2}H_n(\bY)} \rangle - \langle f, \mubar_{n^{-1/2}H_n(\bY)} \rangle| > t) &= \bbP(|\langle f, \mu_{n^{-1/2}H_n(\bY)} \rangle - \bbE \langle f, \mu_{n^{-1/2}H_n(\bY)} \rangle| > t) \\
    &\le C_1 \exp\bigg(-\frac{C_2 n^2 t^2}{\kappa r^2}\bigg).
\end{align*}
This proves (i). 

The proofs of (ii) and (iii) are similar. For (iii), we just note that the Lipschitz constant of the map $\bx \mapsto \langle f, \mu_{n^{-1/2}\tilde{L}_{H_n(\bx)}} \rangle$ is
\[
    \frac{\Xi_{n, r}}{\sqrt{n}} = \sqrt{\frac{r}{n(r - 1)} + \frac{r^2}{n^2}} = \Theta \bigg(\max\bigg\{\frac{1}{\sqrt{n}}, \frac{r}{n}\bigg\}\bigg).
\]
This completes the proof.
\end{proof}
Similarly, using Talagrand's inequality, one can prove the following.
\begin{lemma}\label{lem:lip_bounded_support}
Suppose the entries of $\bY = (Y_{\ell})_{1 \le \ell \le M}$ are uniformly bounded by $K > 0$. There exist absolute constants $C_j, \hat{C}_j, \tilde{C}_j > 0$, $j = 3, 4, 5, 6$, such that with $\delta_1(n) = \frac{C_5 K}{n}$, $\hat{\delta}_1(n) = \frac{\hat{C}_5 K}{n}$, $\tilde{\delta}_1(n) = \frac{\tilde{C}_5 K}{n}$ and $Q = C_6 K$, $\hat{Q} = \hat{C}_6 K$, $\tilde{Q} = \tilde{C}_6 K$ we have for any $1$-Lipschitz function $f$ and any $t, \hat{t}, \tilde{t} > 0$ satisfying $t > (C_3 (Q + \sqrt{t}) \delta_1(n))^{2/5}$, $\hat{t} > (\hat{C}_3 (\hat{Q} + \sqrt{\hat{t}})\hat{\delta}_1(n))^{2/5}$ and $\tilde{t} > (\tilde{C}_3 (\tilde{Q} + \sqrt{\tilde{t}})\tilde{\delta}_1(n))^{2/5}$, that
\begin{enumerate}
    \item [(i)] $\bbP(|\langle f, \mu_{n^{-1/2}H_n(\bY)} \rangle - \langle f, \mubar_{n^{-1/2}H_n(\bY)} \rangle| > t) \le \frac{C_3 (Q + \sqrt{t})}{t^{3/2}}\exp\big(- \frac{C_4 n^2}{r^2} \cdot \big(\frac{t^{5/2}}{C_3 (Q + \sqrt{t})} - \delta_1(n)\big)^2\big)$;
    \item [(ii)] $\bbP(|\langle f, \mu_{(nr)^{-1/2} L_{H_n(\bY)}} \rangle - \langle f, \mubar_{(nr)^{-1/2} L_{H_n(\bY)}} \rangle| > \hat{t}) \le \frac{\hat{C}_3 (\hat{Q} + \sqrt{\hat{t}})}{\hat{t}^{3/2}}\exp\big(- \frac{\hat{C}_4 n}{r} \cdot \big(\frac{\hat{t}^{5/2}}{\hat{C}_3 (\hat{Q} + \sqrt{\hat{t}})} - \hat{\delta}_1(n)\big)^2\big)$;
    \item [(iii)] $\bbP(|\langle f, \mu_{n^{-1/2}\tilde{L}_{H_n(\bY)}} \rangle - \langle f, \mubar_{n^{-1/2}\tilde{L}_{H_n(\bY)}} \rangle| > t) \le \frac{\tilde{C}_3 (\tilde{Q} + \sqrt{\tilde{t}})}{\tilde{t}^{3/2}}\exp\big(- \tilde{C}_4 \cdot \min\{n, \frac{n^2}{r^2}\} \cdot \big(\frac{\tilde{t}^{5/2}}{\tilde{C}_3 (\tilde{Q} + \sqrt{\tilde{t}})} - \tilde{\delta}_1(n)\big)^2\big)$.
\end{enumerate}
\end{lemma}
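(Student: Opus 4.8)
The plan is to carry out the proof of Lemma~\ref{lem:lip_lsi} essentially verbatim, the only change being that the concentration-of-Lipschitz-functions input, which in the logarithmic Sobolev case came from the Herbst argument, is now supplied by Talagrand's concentration inequality for \emph{convex} Lipschitz functions on a product of bounded intervals. Recall from the proof of Lemma~\ref{lem:lip_lsi} that for a $1$-Lipschitz $f$ the map $g_f\colon \bx \mapsto \langle f, \mu_{B_n(\bx)}\rangle = \tfrac1n \Tr f(B_n(\bx))$ is $L_n$-Lipschitz on $(\bbR^M, \|\cdot\|_2)$, where (using Lemmas~\ref{lem:lipschitzness} and \ref{lem:lipschitz_const_estimate}) one has $L_n = \Delta_{n,r}/\sqrt n \le r/n$ for $B_n(\bx) = n^{-1/2}H_n(\bx)$, $L_n = \Gamma_{n,r}/\sqrt n \le \sqrt{r/n}$ for $B_n(\bx) = (nr)^{-1/2}L_{H_n(\bx)}$, and $L_n = \Xi_{n,r}/\sqrt n = \Theta(\max\{n^{-1/2}, r/n\})$ for $B_n(\bx) = n^{-1/2}\tilde{L}_{H_n(\bx)}$. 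Since $\bY \in [-K,K]^M$, Talagrand's inequality will convert $L_n$-Lipschitzness into a sub-Gaussian tail at scale $L_n K$, i.e. with exponent of order $n^2/r^2$, $n/r$ and $\min\{n, n^2/r^2\}$ respectively, matching the three cases of the claimed bound.

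The essential new ingredient is handling convexity. If $f$ is convex, then $A \mapsto \Tr f(A)$ is convex on Hermitian matrices (Klein's lemma), and since $\bx \mapsto B_n(\bx)$ is affine --- the maps $\bx \mapsto H_n(\bx)$, $\bx \mapsto L_{H_n(\bx)}$ and $\bx \mapsto \tilde{L}_{H_n(\bx)}$ are all linear --- the composition $g_f$ is convex. Talagrand's inequality then gives, for a median $\mathfrak{m}_f$ of $g_f(\bY)$,
\[
    \bbP\big(|g_f(\bY) - \mathfrak{m}_f| > s\big) \le 4\exp\Big(-\frac{c\, s^2}{L_n^2 K^2}\Big),
\]
and integrating this tail yields $|\mathfrak{m}_f - \bbE g_f(\bY)| \le C L_n K$, so the same Gaussian estimate holds around $\bbE g_f(\bY) = \langle f, \mubar_{B_n(\bY)}\rangle$ after subtracting $C L_n K$ from $s$.

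For a general $1$-Lipschitz $f$ one first reduces to a bounded spectral window: by Theorems~\ref{thm:extreme_eigen} and \ref{thm:extreme_eigen_laplacian} the bulk edges of $B_n$ are $O(1)$ while only a bounded number of eigenvalues sit at a larger scale, and these contribute $O(1/n)$ to $\tfrac1n\Tr f(B_n)$ (part of what is collected into $\delta_1(n) \asymp K/n$). On such a window one mollifies $f$ to a smooth, still $1$-Lipschitz, $f_\epsilon$ with $\|f - f_\epsilon\|_\infty \le \epsilon$ and $\|f_\epsilon''\|_\infty = O(1/\epsilon)$, and splits $f_\epsilon = g_\epsilon^+ - g_\epsilon^-$ into convex functions of Lipschitz norm $O(1/\epsilon)$ on the window (e.g. adding and subtracting a scaled quadratic). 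Applying the convex step of the previous paragraph to $g_\epsilon^\pm$, whose associated functionals are $O(L_n/\epsilon)$-Lipschitz, together with a union bound and the triangle inequality gives
\[
    \bbP\big(|\langle f, \mu_{B_n(\bY)}\rangle - \langle f, \mubar_{B_n(\bY)}\rangle| > t\big) \le 8\exp\Big(-\frac{c\,\epsilon^2}{L_n^2 K^2}\,\big(\max\{t - c_1\epsilon - c_2 L_n K/\epsilon - c_3/n,\, 0\}\big)^2\Big),
\]
and optimising over $\epsilon \asymp t$ produces a bound of exactly the stated shape; the factor $Q + \sqrt t$, the prefactor $t^{-3/2}$ and the correction $\delta_1(n) \asymp K/n$ are the bookkeeping residue of this optimisation together with the median-to-mean shift. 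Substituting the three values of $L_n$ recorded above yields (i), (ii) and (iii) in turn.

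The main obstacle is precisely the non-convexity of $f \mapsto \langle f, \mu_{B_n(\cdot)}\rangle$ for general Lipschitz $f$: Talagrand's inequality has no useful non-convex counterpart at the scale $L_n K$ (McDiarmid's bounded-differences inequality, the obvious alternative, cannot exploit the $\ell^2$-Lipschitz structure and would lose a factor of $M = \binom nr$), so one is forced through the difference-of-convex approximation, which in turn forces the preliminary confinement of the spectrum to a bounded window and the somewhat delicate three-way trade-off between the approximation error $\epsilon$, the inflated Lipschitz constant $1/\epsilon$, and the median--mean gap. Everything else --- the Talagrand step itself and the substitution of the Lipschitz constants from Lemmas~\ref{lem:lipschitzness}--\ref{lem:lipschitz_const_estimate} --- is routine.
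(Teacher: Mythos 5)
Your first step --- Klein's lemma plus linearity of $\bx \mapsto H_n(\bx)$, $L_{H_n(\bx)}$, $\tilde{L}_{H_n(\bx)}$, Talagrand's inequality on $[-K,K]^M$ at the Lipschitz scales $\Delta_{n,r}/\sqrt n$, $\Gamma_{n,r}/\sqrt n$, $\Xi_{n,r}/\sqrt n$, and the median-to-mean shift --- is exactly the route the paper intends (it gives no proof and simply points, as in Lemma~\ref{lem:lip_lsi} and Corollary~\ref{cor:conc_esd}, to the Talagrand-based arguments of \cite{guionnet2000concentration}), and that part of your write-up is fine.

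The gap is in your passage from convex to general Lipschitz $f$, specifically the "reduction to a bounded spectral window". You justify the confinement by Theorems~\ref{thm:extreme_eigen} and \ref{thm:extreme_eigen_laplacian}, but those are proved only for the \emph{Gaussian} GHAM and are asymptotic, in-probability (or in-distribution) statements; they give no quantitative tail bound for bounded entries $\bY$, which is the hypothesis here (and the universality transfer discussed in Section~\ref{sec:dep_wig} only covers $r \ll n^{1/4}/(\sqrt K \log n)$). Moreover, even granting such a bound, conditioning on a spectral-confinement event is not compatible with Talagrand's product-space inequality without extra work, and the claim that the excursion contributes $O(1/n)$ to $\frac1n\Tr f(B_n)$ is unjustified for a merely Lipschitz $f$: on the bad event the eigenvalues outside the window can be macroscopically many and of size far larger than $O(1)$ (deterministically, $\|n^{-1/2}H_n(\bY)\|_{\op}$ is only bounded by $K\sqrt{nN}$, since each entry of $H_n$ is a sum of $N$ bounded terms --- this is precisely where the present model differs from the setting of \cite{guionnet2000concentration}, where $\int x^2\,d\mu_N$ is deterministically $O(K^2)$). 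The correct mechanism, and the one that actually produces the $Q + \sqrt t$, $t^{5/2}$, $t^{-3/2}$ and $\delta_1(n) \asymp K/n$ bookkeeping in the statement, is to truncate $f$ itself and control the spectral mass outside a compact window by Chebyshev via $\int x^2\,d\mu_{B_n(\bY)} = n^{-1}\|B_n(\bY)\|_F^2$, whose own fluctuations are handled by a separate application of the convex case to the norm $\bx \mapsto \|B_n(\bx)\|_F$ (convex and Lipschitz with the same constants from Lemmas~\ref{lem:lipschitzness}--\ref{lem:lipschitz_const_estimate}); the non-convex $f$ is then treated on the window by a convex decomposition (e.g.\ a grid of hockey-stick functions, or your mollify-and-add-a-quadratic device, provided the convex pieces are extended affinely to all of $\bbR$ so Klein's lemma applies globally, and noting their Lipschitz constant is $O(B/\epsilon)$, not $O(1/\epsilon)$). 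As written, your window-reduction step would fail, so the proposal has a genuine gap at exactly the point where the unusual shape of the bound originates.
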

\begin{proof}[Proof of Corollary~\ref{cor:conc_esd}]
Using a standard covering argument (see the proofs of Theorems~1.3 and 1.4 in \cite{guionnet2000concentration}), we can obtain from Lemmas~\ref{lem:lip_lsi} and \ref{lem:lip_bounded_support} the inequalities given in (i)-(vi). We skip the details.
\end{proof}

\subsection{Proofs of Theorems~\ref{thm:extreme_eigen} and~\ref{thm:extreme_eigen_laplacian}}

\begin{proof}[Proof of Theorem~\ref{thm:extreme_eigen}]
    Define $G_n$ and $G'_n$ as in \eqref{eq:GHAM_decomp} and \eqref{eq:GHAM_decomp_w/o_diag}, respectively and recall that $H_n$ and $G'_n$ have the same distribution. Also, $n^{-1/2} Z_n$ is a GOE random matrix.
    Note that by Weyl's inequality,
    \[
        \max_{1 \le i \le n}|\lambda_i(G_n) - \lambda_i(G'_n)| \le \|\diag(G_n)\|_{\op} =  \max_i |G_{n,ii}| = O_P(\sqrt{\log n}).
    \]
     Write $P_n = \alpha_n U \bone\bone^\top + \beta_n (\bone \bV^\top + \bV \bone^\top)$. Then
    \begin{equation}\label{eq:closensess_of_G_and_P}
        \max_{1 \le i \le n}|\lambda_i(G_n) - \lambda_i(P_n)| \leq \theta_n \|Z_n\|_{\op} = O_P(\sqrt n).
    \end{equation}
    Thus, it is enough to prove \eqref{eq:gham_extreme_1} by replacing $H_n$ with $P_n$ and \eqref{eq:gham_extreme_3} by replacing $H_n$ with $G_n$. 
    
    Notice that $\bone$, $\bV$ are almost surely linearly independent. Consider an ordered basis $\mathcal{B} = \{\bone, \bV, \bw_3, \bw_4, \ldots, \bw_n\}$ of $\bbR^n$, where $\{\bw_3, \bw_4, \ldots, \bw_n\}$ is a linearly independent set which is orthogonal to both $\bone$ and $\bV$. In the basis $\mathcal B$, $P_n$ has the following matrix representation:
    \[
        \begin{pmatrix}
            n\alpha_n U + n\beta_n \bar V  & n\alpha_n U\bar V  +\beta_n \|\bV\|^2  & 0  &0  &\cdots &0\\
            n\beta_n  & n\beta_n \bar V  &0  &0  &\cdots  &0\\
            0  &0  &0  &0  &\cdots  &0\\
            \vdots  &\vdots  &\vdots  &\vdots  &  &\vdots\\
            0  &0  &0  &0  &\cdots  &0
        \end{pmatrix}.
    \]
    From this representation, one can easily compute the eigenvalues of $P_n$. Let $s^2 := \frac{1}{n} \sum_{i=1}^n (V_i - \bar{V})^2$. Then
    \begin{align}
        \frac{\lambda_1(P_n)}{n} &= \frac{\alpha_n}{2} U  + \beta_n \bar V + \sqrt{\bigg(\frac{\alpha_n}{2}U + \beta_n \bar V\bigg)^2 + \beta_n^2 s^2}, \label{eq:eigen_lambda_1}\\
        \frac{\lambda_n(P_n)}{n} &= \frac{\alpha_n}{2}U + \beta_n \bar V -\sqrt{\bigg(\frac{\alpha_n}{2}U + \beta_n \bar V\bigg)^2 + \beta_n^2 s^2}, \label{eq:eigen_lambda_n}\\
        \lambda_k(P_n) &= 0 \quad \text{for } k = 2, 3, \ldots, n - 1.
    \end{align}
    Note that $\bar{V} \convas 0$ and $s^2 \convas 1$. Therefore it is natural to compare $\lambda_1(P_n)$ to $\frac{\alpha_n}{2} U + \sqrt{\frac{\alpha_n^2}{4}U^2 + \beta_n^2}$. To that end, note that
    \begin{align*}
        \sqrt n\bigg|&\frac{\lambda_1(P_n)}{n} - \frac{c_n}{2}U - \sqrt{\frac{c_n^2}{4}U^2 + c_n(1-c_n)}\bigg| \\
        &= \sqrt n \bigg|\frac{1}{2}(\alpha_n - c_n)U +\beta_n \bar V + \frac{(\frac{\alpha_n}{2}U +\beta_n \bar V)^2 +\beta_n^2 s^2 -\frac{c_n^2}{4}U^2 - c_n(1-c_n)}{\sqrt{(\frac{\alpha_n}{2}U +\beta_n \bar V)^2 + \beta_n^2 s^2} + \sqrt{\frac{c_n^2}{4}U^2 + c_n(1-c_n)}} \bigg|\\
        &\le \frac{1}{2}\sqrt{n}|\alpha_n - c_n| \cdot |U| + \beta_n \sqrt{n} |\bar V| + \frac{|\alpha_n U + c_n U + 2\beta_n \bar V| \cdot (\sqrt{n}|\alpha_n - c_n| |U| + 2\beta_n \sqrt{n} |\bar{V}|)}{4\sqrt{c_n (1-c_n)}}\\
        &\qquad\qquad\qquad + \frac{ \sqrt n|\beta_n^2 s^2 -c_n(1-c_n)|}{ \sqrt{c_n(1-c_n)}}.
    \end{align*}
    Observe that $ \sqrt n|\alpha_n - c_n| = O(1/\sqrt n)$. Also, since $\sqrt{n} \bar{V} \sim N(0, 1)$,
    \[
        \beta_n \sqrt n \bar V = O_P(\sqrt c_n).
    \]
    Further, since $\sqrt{n}(s^2 - 1) \convd N(0, 2)$,
    \begin{align*}
        \sqrt n |\beta_n^2 s^2 -c_n(1-c_n)| 
        &\leq \sqrt n s^2|\beta_n^2 - c_n(1-c_n)| + c_n (1-c_n) \sqrt n|s^2 -1| \\
        &= O_P(1/\sqrt n) + O_P(c_n), 
    \end{align*}
    Finally, notice that $\frac{1}{\sqrt{c_n(1-c_n)}} = O(1/\sqrt c_n)$. Combining these we get that
    \[
        \sqrt n\bigg(\frac{\lambda_1(P_n)}{n} - \frac{c_n}{2}U - \sqrt{\frac{c_n^2}{4}U^2 + c_n(1-c_n)}\bigg) = O_P(1).
    \]
    From these the desired result follows for $\lambda_1(P_n)$ and hence for $\lambda_1(H_n)$. Similarly, one can tackle $\lambda_n(H_n)$. This completes the proofs of \eqref{eq:gham_extreme_1} and \eqref{eq:gham_extreme_2}.

    Now we prove (ii). Suppose that $c_n \to 0$ and $r \to \infty$. From \eqref{eq:closensess_of_G_and_P} it follows that
    \[
        \sup_{1 \le i \le n}\bigg|\frac{\lambda_i(G_n)}{\sqrt{nr}} - \frac{\lambda_i(P_n)}{\sqrt{nr}}\bigg| = O_P\bigg(\frac{1}{\sqrt{r}}\bigg).
    \]
    Further note that
    \begin{align*}
        \frac{\lambda_1(P_n)}{\sqrt{nr}} &= \frac{1}{\sqrt{c_n}} \cdot \frac{\lambda_1(P_n)}{n} = \frac{1}{\sqrt{c_n}} \bigg[\frac{c_n}{2}U + \sqrt{\frac{c_n^2}{4}U^2 + c_n(1-c_n)} +  O_P\bigg(\frac{1}{\sqrt{n}}\bigg)\bigg] \\
        &= \frac{\sqrt{c_n}}{2}U + \sqrt{\frac{c_n}{4}U^2 + (1-c_n)} + O_P\bigg(\frac{1}{\sqrt{r}}\bigg),
    \end{align*}
    from which it follows that
    \[
        \frac{\lambda_1(P_n)}{\sqrt{nr}} \convp 1,
    \]
    and similarly,
    \[
        \frac{\lambda_n(P_n)}{\sqrt{nr}} \convp -1.
    \]
    
    Now we consider (iii), the regime where $r$ is fixed. In this regime, we have to consider $G_n$ itself. We think of $G_n$ as a low rank deformation of a scaled GOE matrix:
    \[
        G_n = P_n + \theta_n Z_n.
    \]
    Since $r$ is fixed, it is clear that
    \begin{align*}
        n\alpha_n = O(1), \quad \sqrt{n}\beta_n \to \sqrt{r-2} \quad\text{and} \quad \theta_n \to 1.
    \end{align*}
    Now, multiplying both sides of \eqref{eq:eigen_lambda_1} and \eqref{eq:eigen_lambda_n} by $\sqrt n$ and using the facts that $\bar{\bV} \convas 0 $ and $s^2 \convas 1$, we get that
    \begin{align}\label{eq:lambda_1_and_n_convergence}
        \frac{\lambda_1(P_n)}{\sqrt n} \convas \sqrt{r-2} \quad \text{and} \quad \frac{\lambda_n(P_n)}{\sqrt n} \convas -\sqrt{r-2}.
    \end{align}
    Suppose $\bu_1$ and $\bu_n$ are a orthonormal pair of eigenvectors corresponding to $\lambda_1$ and $\lambda_n$, respectively. Then $ P_n$ has the representation
    \[
        P_n = \lambda_1(P_n) \bu_1 \bu_1^\top + \lambda_n(P_n) \bu_n \bu_n^\top.
    \]
    Define $\tilde P_n = \sqrt{n(r-2)}(\bu_1 \bu_1^\top - \bu_n \bu_n^\top)$. Now, by virtue of \eqref{eq:lambda_1_and_n_convergence}, we may conclude that $\frac{1}{\sqrt n}\|P_n - \tilde P_n\|_\op \convas 0$. Therefore by Weyl's inequality, it is enough to consider $\frac{1}{\sqrt{n}}\tilde G_n$ instead of $\frac{1}{\sqrt n} G_n$, where $\tilde G_n$ is defined as 
    \[
        \tilde G_n = \tilde P_n + \theta_n Z_n.
    \]
    Now note that $\tilde{P}_n$ and $Z_n$ are independent and $Z_n$ is orthogonally invariant. Further, the LSD of $\theta_n Z_n$ is the standard semi-circle law. Hence one may apply Theorem~2.1 of \cite{benaych2011eigenvalues} on $\tilde G_n$ to conclude that
    \begin{align*}
        \frac{\lambda_1(\tilde{G_n})}{\sqrt n} &\convas 
        \begin{cases}
            \sqrt{r-2} + \frac{1}{\sqrt {r-2}} & \text{ if } \sqrt{r-2} > 1, \text{ i.e. if } r > 3, \\
            2 & \text{ if } \sqrt{r-2} \leq 1, \text{ i.e. if } r \le 3.
        \end{cases}
    \end{align*}
    Similarly,
    \begin{align*}
        \frac{\lambda_n(\tilde G_n)}{\sqrt n}&\convas 
        \begin{cases}
            - \sqrt{r-2} - \frac{1}{\sqrt {r-2}} & \text{ if } \sqrt{r-2} > 1, \text{ i.e. if } r > 3, \\
            -2 & \text{ if } \sqrt{r-2} \leq 1, \text{ i.e. if } r \le 3.
        \end{cases}
    \end{align*}
    This gives us the desired result.
    
    To prove (iv), notice that for $k \geq 1$, by Lemma~\ref{lem:weyl_inequality}, for any $i\leq k+1, i' \geq k+1$,
    \[
        \lambda_{i'}(\theta_n Z_n) + \lambda_{n+k+1-i'}(P_n) \leq \lambda_{k+1}(G_n) \leq \lambda_i(\theta_n Z_n) + \lambda_{k+2-i} (P_n).
    \]
    Taking $i=k$ and $i' = k+2$ and noticing that $\lambda_{2}(P_n) = \lambda_{n-1}(P_n) =0$, we get
    \[
        \theta_n \lambda_{k+2}(Z_n) \leq \lambda_{k+1} (G_n) \leq \theta_n \lambda_k (Z_n).
    \]
    As a consequence of Tracy and Widom's  seminal result on the fluctuations of extreme eigenvalues of the GOE (see \cite{tracy1996extremeeigen}), $n^{2/3}(\lambda_k(Z_n) -2)$ and $n^{2/3} (\lambda_{k+2}(Z_n) - 2)$ are $O_P(1)$, implying that $n^{2/3} (\lambda_{k+1}(G_n) - 2 \theta_n) = O_P(1)$. Therefore, 
    \[
        n^{2/3} (\lambda_{k+1}(G_n) - 2(1-c)) = O_P(1).
    \]
    This proves \eqref{eq:gham_extreme_3}. The proof of \eqref{eq:gham_extreme_4} is similar to that of \eqref{eq:gham_extreme_3}, thus skipped.
\end{proof}

\begin{proof}[Proof of Theorem~\ref{thm:extreme_eigen_laplacian}]   
    Here, we prove \eqref{eq:gham_extreme_5}, \eqref{eq:gham_extreme_7}, \eqref{eq:gham_extreme_9},  \eqref{eq:gham_extreme_13}, \eqref{eq:gham_extreme_17}. Then, \eqref{eq:gham_extreme_6}, \eqref{eq:gham_extreme_8}, \eqref{eq:gham_extreme_10},  \eqref{eq:gham_extreme_14},  \eqref{eq:gham_extreme_18} can be proved following the same line of argument with minor modifications, so we skip them. 

    First, we prove part (A). Since $L_{G_n} = L_{G'_n}$, it suffices to consider $L_{G_n}$. Since $B \mapsto L_B$ is a linear map,
    \begin{align*}
          L_{G_n} = L_{P_n} + \theta_n L_{Z_n} = D_{P_n} - P_n + \theta_n L_{Z_n}. 
    \end{align*}
    Using Theorem 1.2 of \cite{campbell2024extreme}, we have $\|L_{Z_n}\|_{\op} = O_P(\sqrt{n \log n})$. Note that 
    \[
        D_{P_n} = n \alpha_n U I + n \beta_n \diag( \bV) + n \beta_n \bar V I.
    \]
    Now, 
    \begin{align*}
        \| n \alpha_n U I\|_{\op} = O_P(n), \quad \|n \bar V I \|_{\op} = O_P(\sqrt n), \quad \text{ and } \quad \|P_n\|_{\op} = O_P(n).
    \end{align*}
    Combining these, we have by Weyl's inequality, 
    \[
        |\lambda_k(L_{G_n}) - n \beta_n \lambda_k(\diag(\bV))| \le \|L_{G_n} - n \beta_n \diag (\bV)\|_{\op} = O_P(n).
    \]
     Notice that $\lambda_k(\diag(\bV))$ is nothing but the $k$-th largest order statistic of $n$ i.i.d. standard Gaussians. Thus, using Lemma~\ref{lem:order_stat_gaussian}, we get
     \[
        \log n\bigg(\frac{\lambda_k(\diag(\bV))}{\sqrt {2 \log n}} - 1 +\frac{\log \log n}{4 \log n}\bigg) = O_P(1).
     \]
     Taking into account the error terms, we can conclude that
    \begin{align*}
        \sqrt{\log n}&\bigg( \frac{\lambda_k(L_{G_n})}{n\sqrt {2  \log n}} -\beta_n \bigg)\\ &=\frac{\lambda_k(L_{G_n})-n \beta_n\lambda_k(\diag(\bV))}{\sqrt 2 n} + \beta_n \sqrt{\log n}\bigg(\frac{\lambda_k(\diag(\bV))}{\sqrt{2 \log n}}-1-\frac{\log{\log n}}{4 \log n}\bigg) + \frac{\beta_n \log{\log n}}{4 \sqrt{\log n}} \\
        &=O_P(1).
    \end{align*}
    which yields \eqref{eq:gham_extreme_5}.\\

    Now we prove parts (B) and (C). Notice that $\|\tilde L_{G'_n} - \tilde L_{G'_n}\|_\mathrm{op} = \frac{1}{r} \max_{i} |G_{n,ii}| = O_P\big(\frac{\sqrt{\log n}}{r}\big)$. We thus see that \eqref{eq:gham_extreme_7}, \eqref{eq:gham_extreme_9},  \eqref{eq:gham_extreme_13}, \eqref{eq:gham_extreme_17} hold for $\tilde L_{G_n}$ if and only if they hold for $\tilde L_{G'_n}$. Thus, it is enough to work with $\tilde{L}_{G_n}$.

    Note that $\tilde L_{G_n}$ can be expressed as
    \[
        \tilde L_{G_n} = \frac{1}{r-1}D_{P_n} - P_n +\frac{\theta_n}{r-1} D_{Z_n} -\theta_n Z_n.
    \]
    Using Lemma~\ref{lem:order_stat_gaussian}, we obtain the following estimates:
    \begin{align*}
        \|D_{P_n}\|_{\op} &\leq n \alpha_n |U|  + n \beta_n \|\diag( \bV)\|_\op + n \beta_n \|\bar V\|_\op  = O_P(n\sqrt{ \log n}), \\
        \|D_{Z_n}\|_{\op} &= \sqrt{n}\max_i \bigg|\frac{1}{\sqrt n}\sum_j Z_{n,ij}\bigg| =O_P(\sqrt{n \log n}). 
    \end{align*}
    Therefore
    \begin{align*}
        |\lambda_1(\tilde L_{G_n}) - \lambda_1(P_n)| 
        &\leq \frac{1}{r-1}\|D_{P_n}\|_{\op} +\frac{\theta_n}{r-1}\|D_{Z_n}\|_{\op} +\theta_n \|Z_n\|_{\op}\\
        &= O_P\bigg(\frac{n\sqrt{ \log n}}{r}\bigg) +  O_P\bigg(\frac{\sqrt{n \log n}}{r}\bigg) + O_P(\sqrt n)\\
        &= O_P\bigg(\frac{n\sqrt{ \log n}}{r}\bigg) + O_P(\sqrt n). 
    \end{align*}
    Using the proof of \eqref{eq:gham_extreme_1},
    \begin{align*}
        \bigg|\frac{\lambda_1(\tilde L_{G_n})}{n} - &\frac{c_n}{2}U -\sqrt{\frac{c_n^2}{4}U +c_n(1-c_n)}\bigg| \\ 
        &\leq \frac{1}{n} |\lambda_1(\tilde L_{G_n}) - \lambda_1(P_n)| + \bigg|\frac{\lambda_1(P_n)}{n} -\frac{c_n}{2}U -\sqrt{\frac{c_n^2}{4}U +c_n(1-c_n)}\bigg|\\
        &=O_P\bigg(\frac{\sqrt {\log n}}{r}\bigg) + O_P\bigg(\frac{1}{\sqrt n} \bigg).
    \end{align*}
    This proves \eqref{eq:gham_extreme_9}. In order to prove \eqref{eq:gham_extreme_7}, notice that
    \[
        \bigg|\lambda_1(\tilde L_{G_n}) - \frac{1}{r-1}\lambda_1(D_{P_n})\bigg| \leq \|P_n\|_{\op} +\frac{\theta_n}{r-1}\|D_{Z_n}\|_{\op} +\theta_n \|Z_n\|_{\op} = O_P(n),
    \]
    which, together with the proof of \eqref{eq:gham_extreme_5}, implies that
    \begin{align*}
        \bigg|\frac{(r-1) \lambda_1(\tilde L_{G_n})}{n\sqrt{2 \log n}} &-\sqrt{c_n(1-c_n)}\bigg| \\ 
        &\leq \bigg|\frac{(r-1) \lambda_1(\tilde L_{G_n})- \lambda_1(D_{P_n})}{n\sqrt{2 \log n}} \bigg| + \bigg|\frac{\lambda_1(D_{P_n})}{n\sqrt{2 \log n}} - \beta_n\bigg| + |\beta_n - \sqrt{c_n(1-c_n)}\bigg|\\
        &= \bigg|\frac{-(r-1)P_n + \theta_n D_{Z_n} - (r-1)\theta_n Z_n }{n \sqrt{2 \log n}}\bigg|+O_P\bigg(\frac{1}{\sqrt {\log n}}\bigg) + O_P\bigg(\frac{1}{n}\bigg)\\
        &= O_P\bigg(\frac{r}{\sqrt{\log n}}\bigg) + O_P\bigg(\frac{1}{\sqrt n}\bigg) + O_P\bigg(\frac{r}{\sqrt{n \log n}}\bigg) + O_P\bigg(\frac{1}{\sqrt{\log n}}\bigg) + O_P\bigg(\frac{1}{n}\bigg)\\
        &= O_P\bigg(\frac{r}{\sqrt{\log n}}\bigg).
    \end{align*}
    This completes the proof of (B).
    
    For \eqref{eq:gham_extreme_13} and \eqref{eq:gham_extreme_17}, using Weyl's inequality as in the proof of \eqref{eq:gham_extreme_3}, it is enough to consider the matrix 
    \[
        \tilde L'_{G_n} := \tilde L_{G_n} - P_n = \frac{1}{r-1} D_{P_n} +\frac{\theta_n}{r-1} D_{Z_n} + \theta_n Z_n.
    \]
    Note that
    \begin{align*}
        \|\tilde L'_{G_n} - \frac{1}{r-1} D_{P_n}\|_{\op} &= O_P\bigg(\frac{\sqrt {n \log n}}{r}\bigg) +O_P(\sqrt n), \\
        \|\tilde L'_{G_n} - \theta_n Z_n\|_{\op} &= O_P\bigg(\frac{n \sqrt{\log n}}{r}\bigg).
    \end{align*}
    Therefore if $r \ll \sqrt{n \log n}$, then
    \begin{align*}
        \bigg|\frac{(r-1) \lambda_k(\tilde L'_{G_n})}{n\sqrt{2 \log n}} -\sqrt{c_n(1-c_n)}\bigg| 
        &\leq \bigg|\frac{(r-1) \lambda_k(\tilde L'_{G_n})- \lambda_k(D_{P_n})}{n\sqrt{2 \log n}} \bigg| + \bigg|\frac{\lambda_k(D_{P_n})}{n\sqrt{2 \log n}} - c_n(1-c_n)\bigg|\\
        &= O_P\bigg(\frac{1}{\sqrt {\log n}}\bigg) + O_P\bigg(\frac{r}{\sqrt{n \log n}}\bigg),
    \end{align*}
    which proves \eqref{eq:gham_extreme_13}.
    On the other hand, if $r \gg \sqrt{n \log n}$,
    \begin{align*}
        \bigg|\frac{\lambda_k(\tilde L'_{G_n})}{\sqrt n} -2 (1-c)\bigg| 
        &\leq  \frac{1}{\sqrt n}\|\tilde L'_{G_n} - \theta_n Z_n\|_{\op} + \bigg| \theta_n \lambda_k\bigg(\frac{1}{\sqrt n}Z_n\bigg) - 2(1-c) \bigg|\\
        &= O_P\bigg(\frac{\sqrt{n \log n}}{r}\bigg) + O_P\bigg(\frac{1}{n^{2/3}}\bigg)\\
        &= O_P\bigg(\frac{\sqrt{n \log n}}{r}\bigg)
    \end{align*}
    proving \eqref{eq:gham_extreme_17}. This completes the proof of the theorem.
\end{proof}

\section*{Acknowledgements}
We thank Arijit Chakrabarty for helpful discussions. SSM was partially supported by the INSPIRE research grant DST/INSPIRE/04/2018/002193 from the Dept.~of Science and Technology, Govt.~of India, 
and a Start-Up Grant from Indian Statistical Institute.

\bibliographystyle{alpha}
\bibliography{refs.bib}

\appendix
\section{Auxiliary results}\label{sec:aux}
Here we collect lemmas and results borrrowed from the literature. First we define some notations.
\[
    \Mat_n(\bbC) := \text{The set of all } n \times n \text{ matrices with complex entries}.
\]
For $A \in \Mat_n(\bbC) $, define the Frobenius norm of $A$ by
\[
    \|A\|_{\mathrm{F}} := \sqrt{\sum_{1 \le i \le j \le n}|A_{ij}|^2}.
\]
For $x \in \bbR^n $, let $\|x\| = \sqrt{\sum_{i = 1}^n x^2_i}$. The Operator norm of $A$ is defined as
\[
\|A\|_{\op} := \sup_{\|x\| = 1} \|Ax\|.
\]
For a random matrix $A$ with eigenvalues $\lambda_1, \ldots, \lambda_n$, let $F_{A}(x) := \frac{1}{n} \sum_{i = 1}^n \ind(\lambda_i \le x)$ be the empirical distribution function associated with the eigenvalues.  Let $\cS_n$ denotes the set of all permutations of the set $\{1, 2, \ldots, n\}$. 
\begin{lemma}[Hoffmann-Wielandt inequality]\label{lem:hoffman-wielandt}
  Let $A,B \in \Mat_n(\bbC) $ are two normal matrices, with eigenvalues $\lambda_1(A),\lambda_2(A), \ldots, \lambda_n(A)$ and $\lambda_1(B),\lambda_2(B), \ldots, \lambda_n(B)$ respectively. Then we have
  \[
    \min_{\sigma \in \cS_n} \sum_{i = 1}^n | \lambda_i(A) - \lambda_{\sigma(i)}(B)|^2 \le \|A - B \|^2_{\mathrm{F}}.
  \]
  An immediate consequence of this is that
  \[
        d_{W_2}(\mu_A, \mu_B)^2 \le \frac{\|A - B\|^2}{n}.
  \]
\end{lemma}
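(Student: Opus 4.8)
The plan is to run the classical argument via the Birkhoff--von Neumann theorem. First I would use normality: write $A = U D_A U^*$ and $B = V D_B V^*$ with $U, V$ unitary and $D_A = \diag(\lambda_1(A), \ldots, \lambda_n(A))$, $D_B = \diag(\lambda_1(B), \ldots, \lambda_n(B))$. Since the Frobenius norm is unitarily invariant, multiplying $A - B$ on the left by $V^*$ and on the right by $V$ gives $\|A - B\|_F^2 = \|W D_A W^* - D_B\|_F^2$, where $W := V^* U$ is unitary. Expanding,
\[
    \|W D_A W^* - D_B\|_F^2 = \sum_{k} |\lambda_k(A)|^2 + \sum_i |\lambda_i(B)|^2 - 2 \Re \Tr(W D_A W^* D_B^*),
\]
and a direct computation of the diagonal of $W D_A W^*$ yields $\Tr(W D_A W^* D_B^*) = \sum_{i, k} |w_{ik}|^2 \, \lambda_k(A) \overline{\lambda_i(B)}$, where $w_{ik}$ are the entries of $W$.

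Next I would set $s_{ik} := |w_{ik}|^2$ and observe that, because $W$ is unitary, the matrix $S = (s_{ik})$ is doubly stochastic: $\sum_i s_{ik} = \sum_k s_{ik} = 1$. On the other hand, for any $\sigma \in \cS_n$,
\[
    \sum_{i = 1}^n |\lambda_i(A) - \lambda_{\sigma(i)}(B)|^2 = \sum_k |\lambda_k(A)|^2 + \sum_i |\lambda_i(B)|^2 - 2 \Re \sum_{i} \lambda_i(A) \overline{\lambda_{\sigma(i)}(B)},
\]
using that $\sigma$ is a bijection to match the $|\lambda_i(B)|^2$ sums. Comparing the two displays, the claim reduces to showing that
\[
    \max_{\sigma \in \cS_n} \Re \sum_i \lambda_i(A) \overline{\lambda_{\sigma(i)}(B)} \ge \Re \sum_{i, k} s_{ik} \, \lambda_k(A) \overline{\lambda_i(B)}.
\]
Here I would invoke Birkhoff--von Neumann: the set of doubly stochastic matrices is the convex hull of the permutation matrices, and the map $T \mapsto \Re \sum_{i,k} T_{ik} \lambda_k(A) \overline{\lambda_i(B)}$ is linear, hence attains its maximum over this compact convex polytope at an extreme point, i.e.\ at some permutation matrix $P_\tau$. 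Evaluating at $P_\tau$ and reindexing gives $\Re \sum_{i,k} s_{ik} \lambda_k(A) \overline{\lambda_i(B)} \le \Re \sum_i \lambda_i(A) \overline{\lambda_{\tau^{-1}(i)}(B)}$, which is of the required form with $\sigma = \tau^{-1}$. This proves the main inequality.

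For the stated consequence, I would take the permutation $\sigma$ achieving the minimum and consider the coupling of $\mu_A$ and $\mu_B$ that places mass $1/n$ at each pair $(\lambda_i(A), \lambda_{\sigma(i)}(B))$; its marginals are correct since $\sigma$ is a bijection, so $d_{W_2}(\mu_A, \mu_B)^2 \le \frac{1}{n} \sum_i |\lambda_i(A) - \lambda_{\sigma(i)}(B)|^2 \le \frac{\|A - B\|_F^2}{n}$. The only genuinely nontrivial step is the Birkhoff--von Neumann argument (equivalently, that a linear functional on the Birkhoff polytope is optimized at a permutation matrix); everything else is bookkeeping with unitary invariance and the observation that the squared-modulus matrix of a unitary is doubly stochastic. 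Normality is used exactly once, namely to guarantee the orthonormal eigenbases that make $U$ and $V$ unitary.
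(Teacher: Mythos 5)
Your argument is correct: it is the classical Hoffman--Wielandt proof, reducing via the spectral theorem and unitary invariance of the Frobenius norm to a comparison of the linear functional $T \mapsto \Re \sum_{i,k} T_{ik}\,\lambda_k(A)\overline{\lambda_i(B)}$ at the doubly stochastic matrix $(|w_{ik}|^2)$ with its maximum over the Birkhoff polytope, which is attained at a permutation matrix; the bookkeeping steps (the trace expansion, double stochasticity from unitarity, and the explicit $1/n$-coupling for the $d_{W_2}$ consequence) all check out. The paper itself does not prove this lemma --- it is stated in the appendix as a result borrowed from the literature --- so there is no competing argument to compare against; your write-up supplies exactly the standard proof that the citation presupposes, with normality used only to obtain the unitary diagonalisations.
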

\begin{lemma}[Rank inequality]\label{lem:rank_ineq}
 Let $A, B \in \Mat_n(\bbC)  $ are two Hermitian matrices. Then,
 \[
     \sup_{x \in \bbR} |F_A(x) - F_B(x)| \le \frac{\mathrm{rank}(A - B)}{n}.
 \]
\end{lemma}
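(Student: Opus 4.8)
The plan is to reduce the statement to a low-rank eigenvalue interlacing inequality and then to convert that interlacing into a bound on the two empirical distribution functions by an elementary counting argument. Throughout I would order the eigenvalues of an $n \times n$ Hermitian matrix $M$ as $\lambda_1(M) \ge \cdots \ge \lambda_n(M)$, adopting the convention $\lambda_j(M) = +\infty$ for $j \le 0$ and $\lambda_j(M) = -\infty$ for $j > n$ so that the index arithmetic below never needs special-casing.

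First I would establish the interlacing bound: if $C = A - B$ and $k = \rank(C)$, then
\[
    \lambda_{i+k}(B) \le \lambda_i(A) \le \lambda_{i-k}(B) \qquad \text{for all } 1 \le i \le n.
\]
This is Weyl's inequality for a rank-$k$ perturbation. One clean way to see it: $W := \ker C$ has $\dim W = n - k$ and $\langle v, A v \rangle = \langle v, B v \rangle$ for every $v \in W$; letting $S_0$ be the span of the top $i + k$ eigenvectors of $B$, the subspace $S_0 \cap W$ has dimension at least $(i+k) + (n-k) - n = i$, and on it the Rayleigh quotient of $A$ equals that of $B$, which is $\ge \lambda_{i+k}(B)$, so by the Courant--Fischer max--min formula $\lambda_i(A) \ge \lambda_{i+k}(B)$. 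Swapping the roles of $A$ and $B$ (i.e. replacing $C$ by $-C$) gives $\lambda_{i+k}(A) \le \lambda_i(B)$, which after reindexing is $\lambda_i(A) \le \lambda_{i-k}(B)$. Alternatively one can decompose $C = C_+ - C_-$ into its positive and negative parts, whose ranks sum to $k$ by orthogonality of the eigenspaces, and apply Weyl monotonicity twice.

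Next I would turn this into the distribution-function bound. Fix $x \in \bbR$. From $\lambda_i(A) \le \lambda_{i-k}(B)$ it follows that whenever $\lambda_{i-k}(B) \le x$ we also have $\lambda_i(A) \le x$, so the injection $j \mapsto j + k$ embeds $\{ j : \lambda_j(B) \le x,\ j + k \le n \}$ into $\{ i : \lambda_i(A) \le x \}$; hence $\#\{ i : \lambda_i(A) \le x \} \ge \#\{ j : \lambda_j(B) \le x \} - k$. Symmetrically, from $\lambda_{i+k}(B) \le \lambda_i(A)$ one gets, via the injection $i \mapsto i + k$, that $\#\{ i : \lambda_i(A) \le x \} \le \#\{ j : \lambda_j(B) \le x \} + k$. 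Dividing by $n$ and recalling $F_M(x) = \tfrac1n \#\{ i : \lambda_i(M) \le x \}$ yields $|F_A(x) - F_B(x)| \le k/n$, and taking the supremum over $x$ completes the proof.

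I do not anticipate a genuine obstacle: the only substantive input is the rank-$k$ interlacing of the first step, which is completely classical (Courant--Fischer, or Weyl applied to the positive and negative parts of $C$), and the counting in the second step is routine bookkeeping — the one place to be mildly careful is the behaviour of the index shifts near $1$ and $n$, which the $\pm\infty$ convention handles automatically.
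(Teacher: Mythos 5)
Your proof is correct: the rank-$k$ interlacing $\lambda_{i+k}(B) \le \lambda_i(A) \le \lambda_{i-k}(B)$ obtained from Courant--Fischer restricted to $\ker(A-B)$ is valid, and the injection/counting step converting it into $|F_A(x)-F_B(x)| \le \rank(A-B)/n$ is sound, with the boundary indices handled correctly by your $\pm\infty$ convention. The paper does not prove this lemma at all --- it is quoted in the appendix as a standard result borrowed from the literature (it is the classical rank inequality of Bai--Silverstein) --- and your argument is precisely the standard proof of that result, so there is nothing to reconcile.
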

\begin{lemma}[Weyl's inequality]\label{lem:weyl_inequality}
    Let $A, B \in \Mat_n(\bbC)$ be two Hermitian matrices with decreasing sequence of eigenvalues $\lambda_1(A), \lambda_2(A), \ldots, \lambda_n(A)$ and $\lambda_1(B), \lambda_2(B), \ldots, \lambda_n(B)$, respectively. Then, for $i \in [n]$,
    \[
        \lambda_{j'}(A) + \lambda_{i-j'+n}(B) \leq \lambda_i(A+B) \leq \lambda_j(A) + \lambda_{i-j+1}(B)
    \]
    for any $j\leq i$ and $j' \geq i$. A consequence of this is that for any $1 \le i \le n$,
    \[
        |\lambda_i(A + B) - \lambda_i(A)| \le \max \{|\lambda_1(B)|, |\lambda_n(B)|\} = \|B\|_{\op}.
    \]
\end{lemma}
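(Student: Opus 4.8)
The plan is to derive both inequalities from the Courant--Fischer min--max characterisation of the eigenvalues of a Hermitian matrix: for $M \in \Mat_n(\bbC)$ Hermitian with $\lambda_1(M) \ge \cdots \ge \lambda_n(M)$ and $k \in [n]$,
\[
    \lambda_k(M) = \max_{\dim S = k}\ \min_{x \in S \setminus \{0\}} \frac{\langle x, Mx\rangle}{\langle x, x\rangle} = \min_{\dim S = n-k+1}\ \max_{x \in S \setminus \{0\}} \frac{\langle x, Mx\rangle}{\langle x, x\rangle},
\]
the extrema being over linear subspaces $S \subseteq \bbC^n$ of the stated dimension. I will take this as known; it is standard (see, e.g., Bhatia's \emph{Matrix Analysis}).

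First I would prove the upper bound $\lambda_i(A+B) \le \lambda_j(A) + \lambda_{i-j+1}(B)$ for $j \le i$. Let $S_A$ be the span of eigenvectors of $A$ for the eigenvalues $\lambda_j(A), \dots, \lambda_n(A)$, so $\dim S_A = n-j+1$ and $\langle x, Ax\rangle \le \lambda_j(A)\langle x, x\rangle$ on $S_A$; likewise let $S_B$ be the span of eigenvectors of $B$ for $\lambda_{i-j+1}(B), \dots, \lambda_n(B)$, so $\dim S_B = n-i+j$ and $\langle x, Bx\rangle \le \lambda_{i-j+1}(B)\langle x, x\rangle$ on $S_B$. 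Since $\dim(S_A \cap S_B) \ge (n-j+1) + (n-i+j) - n = n-i+1$, feeding any $(n-i+1)$-dimensional subspace of $S_A \cap S_B$ into the min--max (second) formula for $\lambda_i(A+B)$ gives the bound, because on that subspace the Rayleigh quotient of $A+B$ is at most $\lambda_j(A) + \lambda_{i-j+1}(B)$.

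Next, the lower bound $\lambda_{j'}(A) + \lambda_{i-j'+n}(B) \le \lambda_i(A+B)$ for $j' \ge i$ follows by applying the upper bound to $-A$ and $-B$ and using $\lambda_k(-M) = -\lambda_{n+1-k}(M)$; after the change of indices $i \mapsto n+1-i$, $j \mapsto n+1-j$ this is exactly the asserted inequality. Finally, the stated consequence is immediate: taking $j = i$ in the upper bound and $j' = i$ in the lower bound yields $\lambda_i(A) + \lambda_n(B) \le \lambda_i(A+B) \le \lambda_i(A) + \lambda_1(B)$, hence $|\lambda_i(A+B) - \lambda_i(A)| \le \max\{|\lambda_1(B)|, |\lambda_n(B)|\} = \|B\|_{\op}$, the last equality holding because $B$ is Hermitian.

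There is no genuine difficulty here --- this is a textbook inequality. The only thing to be careful about is bookkeeping: choosing the ``trailing'' eigenspaces (rather than the ``leading'' ones) so that the Rayleigh quotients are bounded from above, getting the subspace dimensions and the index $i-j+1$ right, and tracking the index reflection $k \mapsto n+1-k$ when deducing the lower bound. One could equally well just cite the result from a standard reference.
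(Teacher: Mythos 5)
Your proof is correct: the dimension count $\dim(S_A\cap S_B)\ge n-i+1$, the index bookkeeping giving $\lambda_{i-j+1}(B)$, the reflection $\lambda_k(-M)=-\lambda_{n+1-k}(M)$ that converts the upper bound into the stated lower bound with $i-j'+n$, and the specialisation $j=j'=i$ yielding $|\lambda_i(A+B)-\lambda_i(A)|\le\max\{|\lambda_1(B)|,|\lambda_n(B)|\}=\|B\|_{\op}$ all check out. Note, however, that the paper itself gives no proof: the lemma sits in Appendix~\ref{sec:aux}, which is explicitly a collection of results borrowed from the literature, so the authors simply cite it as standard. Your Courant--Fischer argument is the classical textbook derivation and fills in exactly what the paper leaves to a reference; citing Bhatia (or Horn--Johnson) would have been equally acceptable, as you yourself observe.
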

The following lemma is an amalgamation of Theorems~1.5.3 and ~2.2.2 from \cite{leadbetter1983extreme}.
\begin{lemma}[Extreme order statistics of Gaussians]\label{lem:order_stat_gaussian}
    If $\xi_1, \xi_2, \ldots, \xi_n$ are i.i.d. standard normal variables, then for $k \in \bbN$,
    \begin{equation*}
        \bbP\bigg[\sqrt{2 \log n}\bigg(\xi_{(k)} - \sqrt{2 \log n} + \frac{\log \log n + \log (4\pi)}{2 \sqrt{2 \log n}}\bigg)\leq t \bigg] \to e^{-e^{-x}}\sum_{j=0}^{k-1} \frac{e^{-jx}}{j!},
    \end{equation*}
where $\xi_{(k)}$ denotes the $k$-th largest order statistic of $\xi_1, \xi_2, \ldots, \xi_n$. In particular, 
\begin{equation}\label{eq:order_stat_gaussian}
    2 \log n\bigg(\frac{\xi_{(k)}}{\sqrt {2 \log n}}-1 +\frac{\log \log n}{4 \log n}\bigg) = O_P(1).
\end{equation}
\end{lemma}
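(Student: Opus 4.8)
This is a classical fact from extreme value theory, so the plan is simply to quote the relevant statements of \cite{leadbetter1983extreme} and then perform an elementary rearrangement. First I would recall Theorem~1.5.3 of \cite{leadbetter1983extreme}, which gives the affine normalisation for the maximum of i.i.d.\ standard Gaussians: with
\[
    a_n := \sqrt{2 \log n}, \qquad b_n := \sqrt{2 \log n} - \frac{\log \log n + \log(4\pi)}{2 \sqrt{2 \log n}},
\]
one has $a_n(\xi_{(1)} - b_n) \convd \Lambda$, where $\Lambda$ has the Gumbel distribution function $e^{-e^{-x}}$. Then I would invoke Theorem~2.2.2 of \cite{leadbetter1983extreme}, the general result upgrading this, under the \emph{same} constants $a_n, b_n$, to the $k$-th largest order statistic: $a_n(\xi_{(k)} - b_n) \convd \Lambda_k$, where $\Lambda_k$ has distribution function $G(x)\sum_{s=0}^{k-1}\frac{(-\log G(x))^s}{s!}$ with $G(x) = e^{-e^{-x}}$, i.e. $e^{-e^{-x}}\sum_{j=0}^{k-1} e^{-jx}/j!$. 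Since $a_n = \sqrt{2\log n}$ and $\xi_{(k)} - b_n = \xi_{(k)} - \sqrt{2\log n} + \frac{\log\log n + \log(4\pi)}{2\sqrt{2\log n}}$, this is exactly the first display in the statement; one also checks that $\Lambda_k$ is a proper (hence tight) law.

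To deduce \eqref{eq:order_stat_gaussian}, I would use that convergence in distribution to a tight limit forces $\sqrt{2\log n}\,(\xi_{(k)} - b_n) = O_P(1)$, i.e. $\xi_{(k)} = b_n + O_P(1)/\sqrt{2\log n}$. Dividing by $\sqrt{2\log n}$ and using $b_n/\sqrt{2\log n} = 1 - \frac{\log\log n + \log(4\pi)}{4\log n}$ gives
\[
    \frac{\xi_{(k)}}{\sqrt{2\log n}} - 1 + \frac{\log\log n}{4\log n} = -\frac{\log(4\pi)}{4\log n} + \frac{O_P(1)}{2\log n}.
\]
Multiplying through by $2\log n$ yields $2\log n\big(\tfrac{\xi_{(k)}}{\sqrt{2\log n}} - 1 + \tfrac{\log\log n}{4\log n}\big) = -\tfrac12\log(4\pi) + O_P(1) = O_P(1)$, which is \eqref{eq:order_stat_gaussian}.

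There is no genuine obstacle here — both ingredients are off the shelf — so the only thing requiring attention is the bookkeeping of the normalising constants: one must verify that the $\frac{\log\log n}{4\log n}$ correction appearing in \eqref{eq:order_stat_gaussian} is precisely the term needed to cancel the $\frac{\log\log n}{4\log n}$ piece of $b_n/\sqrt{2\log n}$ (without this cancellation, multiplication by $2\log n$ would produce a divergence of order $\log\log n$), after which the residual deterministic constant $-\tfrac12\log(4\pi)$ is harmlessly absorbed into $O_P(1)$. Modulo that arithmetic, the lemma is immediate.
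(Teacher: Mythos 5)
Your proposal is correct and is essentially the paper's own route: the lemma is stated there precisely as an amalgamation of Theorems~1.5.3 and~2.2.2 of \cite{leadbetter1983extreme}, with the $O_P(1)$ claim following by the same rearrangement of the normalising constants you carry out. Your explicit bookkeeping showing the $\frac{\log\log n}{4\log n}$ cancellation and the absorption of $-\tfrac12\log(4\pi)$ into $O_P(1)$ is exactly the (implicit) content of \eqref{eq:order_stat_gaussian}.
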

Since the $\xi_i$'s are symmetric, a similar result holds for $\xi_{n+1-k}$, modulo the obvious sign change in the centering parameter.

\section{Miscellaneous results and proofs}\label{sec:more_proofs}
We can adapt Lemma 4.12 of \cite{bryc2005spectral} to our setting. 
\begin{lemma}\label{lem:BDJ_replacement}
Let 
\[
    \hat{W}_n = \frac{\sqrt{n(r - 2)}}{r - 1} \diag(\bV) + \frac{\diag(Z_n\bone)}{r - 1} - Z_n
\]
and
\[
    \tilde{W}_n = \frac{\sqrt{n(r - 2)}}{r - 1} \diag(\bV) + \frac{\sqrt{n + 1} \, \diag(\bg)}{r - 1} + Z_n,
\]
where $n^{-1/2} Z_n$ is a GOE random matrix and $\bV$ and $\bg$ are vectors of i.i.d. standard Gaussian random variables. Further, $\bV$, $\bg$ and $Z_n$ are independent. Then, for fixed $r$ and every $k \in \bbN$,
\[
    \lim_{n \to \infty} n^{-(k + 1)} [ \bbE \Tr(\hat{W}_n^{2k}) - \bbE \Tr(\tilde{W}_n^{2k})] = 0.
\]
\end{lemma}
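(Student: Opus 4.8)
The plan is to prove this by the moment (trace--power) method, adapting the combinatorial argument of \cite[Lemma~4.12]{bryc2005spectral}. First I would put the two models in a common form. Since $n^{-1/2}Z_n$ is a GOE matrix, $-Z_n \stackrel{d}{=} Z_n$, and making this substitution flips the sign of $\diag(Z_n\bone)$; likewise $-\bg \stackrel{d}{=} \bg$. Hence
\begin{align*}
    \hat{W}_n &\stackrel{d}{=} \hat{W}_n' := \tfrac{\sqrt{n(r-2)}}{r-1}\diag(\bV) - \tfrac{1}{r-1}\diag(Z_n\bone) + Z_n, \\
    \tilde{W}_n &\stackrel{d}{=} \tilde{W}_n' := \tfrac{\sqrt{n(r-2)}}{r-1}\diag(\bV) - \tfrac{\sqrt{n+1}}{r-1}\diag(\bg) + Z_n,
\end{align*}
so in both models $Z_n$ occurs with a $+$ sign and it suffices to compare $\bbE\Tr((\hat{W}_n')^{2k})$ with $\bbE\Tr((\tilde{W}_n')^{2k})$. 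Writing $\bbE\Tr(W^{2k}) = \sum_{v_1,\ldots,v_{2k}} \bbE \prod_{s=1}^{2k} W_{v_s v_{s+1}}$ (indices cyclic, $v_{2k+1} = v_1$) and using that all entries of both matrices are jointly centered Gaussian, I would apply the Wick--Isserlis formula to express each summand as a sum, over perfect matchings $P$ of $\{1,\ldots,2k\}$, of products $\prod_{\{s,t\}\in P}\Cov(W_{v_s v_{s+1}}, W_{v_t v_{t+1}})$.

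Next I would tabulate the covariances. Call step $s$ a \emph{loop step} if $v_s = v_{s+1}$ and an \emph{edge step} otherwise. For $\tilde{W}_n'$ the only nonzero covariances are: (i) two loop steps at the same vertex, equal to $\tfrac{n}{r-1} + O(1)$; and (ii) two edge steps over the same unordered edge, equal to $1$. For $\hat{W}_n'$, because $\diag(Z_n\bone)$ is assembled from $Z_n$ itself, one gets these same two families — family (i) is again $\tfrac{n}{r-1} + O(1)$, with a different $O(1)$ term — \emph{together with} two extra families: (iii$'$) two loop steps at \emph{distinct} vertices, with covariance $\tfrac{1}{(r-1)^2}$; and (iii$''$) a loop step at $i$ paired with an edge step incident to $i$, with covariance $\pm\tfrac{1}{r-1}$. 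Crucially, families (iii$'$) and (iii$''$) contribute only $O(1)$, never $\Theta(n)$. I would then split $\bbE\Tr((\hat{W}_n')^{2k}) = \Sigma_{\mathrm{main}} + \Sigma_{\mathrm{extra}}$ according as the matching uses only families (i)--(ii) or uses at least one family-(iii$'$)/(iii$''$) pairing; for $\tilde{W}_n'$ only the analogue $\Sigma_{\mathrm{main}}'$ is nonzero.

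For the main parts: for a fixed walk and a fixed matching of families (i)--(ii), the two covariance products differ only through the $O(1)$ corrections attached to the $p := \#\{\text{family-(i) pairs}\}$ factors of size $\tfrac{n}{r-1}$, so their difference is $O(n^{p-1})$. A standard connectedness count shows that for such an admissible walk the number $m$ of distinct vertices satisfies $m + p \le k+1$ (the $2(k-p)$ edge steps form $k-p$ equal-edge pairs whose edge-skeleton is connected and spans at most $k-p+1$ vertices), and the number of walks of a fixed shape is $O(n^m)$; summing, $\Sigma_{\mathrm{main}} - \Sigma_{\mathrm{main}}' = O(n^{k})$. For $\Sigma_{\mathrm{extra}}$: each family-(iii$'$)/(iii$''$) pairing replaces a $\Theta(n)$ contribution (that a family-(i) pairing would have supplied) by an $O(1)$ one, \emph{and} forces the walk's edge-skeleton to carry a cycle — a family-(iii$''$) pairing leaves some edge with an odd traversal count unless it lies on a cycle, a family-(iii$'$) pairing ties together two loop vertices that would otherwise be free — so that $m$ drops by (at least) one beyond the loss of the $\Theta(n)$-factor; hence every such matching contributes $O(n^{k})$ and $\Sigma_{\mathrm{extra}} = O(n^{k})$. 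Combining, $\bbE\Tr(\hat{W}_n^{2k}) - \bbE\Tr(\tilde{W}_n^{2k}) = O(n^{k}) = o(n^{k+1})$, which is the assertion.

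I expect the genuine difficulty to be exactly this vertex-counting in $\Sigma_{\mathrm{extra}}$: making precise that \emph{every} occurrence of a cross-correlation pairing (diagonal-with-off-diagonal, or diagonal-at-distinct-vertices) costs at least one power of $n$ against the $n^{k+1}$ budget of the leading ``GOE plus independent diagonal'' terms. This is where the argument of \cite{bryc2005spectral} must be adapted to the fact that our diagonal perturbation is $\tfrac{1}{r-1}\diag(Z_n\bone)$ rather than a bare row-sum, and to the presence of the extra, genuinely independent, Gaussian diagonal $\tfrac{\sqrt{n(r-2)}}{r-1}\diag(\bV)$ — the latter enters only through family (i) and is handled exactly as in the deformed-GOE ensemble, whose moments (equivalently, those of $\tilde{W}_n$) are in turn identified via Theorem~\ref{thm:pastur_free_conv} with the $2k$-th moment of $\nu_{\Gauss, 1/(r-1)} \boxplus \nu_{\sc}$.
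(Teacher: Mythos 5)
Your overall strategy --- expand $\bbE\Tr(\cdot^{2k})$ over closed walks, apply Wick's formula to the jointly Gaussian entries, and compare the two models by classifying the pairings --- is sound and is essentially the same moment-method adaptation of \cite[Lemma~4.12]{bryc2005spectral} that the paper uses; the paper merely executes it differently, coupling $\tilde W_n$ with an independent copy $W'_n$ built from $(n+1)$-dimensional data and bounding the per-word differences $f_n(w)=\bbE \hat W_w-\bbE W'_w$ by $C_w\big(\tfrac{n}{r-1}+3\big)^{k-v(w)+1/2}$ (treating $u(w)\ge 1$ and $u(w)=0$ separately, the latter by a telescoping/H\"older argument), rather than splitting the Wick sum into ``main'' and ``extra'' pairing families as you do. Your covariance tables and your main-term estimate (difference $O(n^{p-1})$ per shape, together with $m+p\le k+1$) are correct.

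The one step that does not hold as stated is your justification of $\Sigma_{\mathrm{extra}}=O(n^{k})$: a family-(iii$''$) pairing does \emph{not} force the edge-skeleton to carry a cycle, nor an edge with odd traversal count --- parity of traversal counts is a property of the walk, not of the matching. Counterexample: the closed walk (loop at $i$, step $i\to j$, step $j\to i$, loop at $i$) with each of the two traversals of $\{i,j\}$ Wick-paired to a loop step; here $q_2=2$, the skeleton is a single edge (a tree), and every traversal count is even. The inequality you need, $m+p_1\le k$ whenever $q_1+q_2\ge 1$ (with $p_1,p_2$ the family-(i),(ii) pairs and $q_1,q_2$ the extra families), is nevertheless true, but for a different reason: every distinct edge must absorb either a full family-(ii) pair or at least one family-(iii$''$) partner, so the number $e$ of distinct edges satisfies $e\le p_2+q_2$, whence $m\le e+1\le p_2+q_2+1$ and $m+p_1\le k-q_1+1$; and equality $m+p_1=k+1$ with $q_2\ge 1$ would force a tree skeleton in which some edge is traversed exactly once, contradicting the fact that a closed walk on a tree traverses each edge an even number of times. (In the counterexample the single edge consumes two (iii$''$) pairs, which is what saves the count.) With this repaired accounting your bound $O(n^{k})=o(n^{k+1})$ goes through, but as written the cycle-forcing claim is incorrect at exactly the point you yourself flagged as the crux.
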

\begin{proof}
Let $\bV = \bV_n$ and $\bV_{n + 1} = \begin{pmatrix}
    \bV'_n \\
    V_{n + 1}
\end{pmatrix}$ where $\bV'_n$ is an i.i.d copy of $\bV_n$ and $V_{n + 1}$ is a standard Gaussian independent of $\bV'_n$. Let $Z_{n + 1}$ be the matrix defined by 
\[
Z_{n + 1} = \begin{pmatrix}
     Z'_n & \bZ \\
     \bZ^\top & Z^1
\end{pmatrix},
\]
where $ Z'_n$ is an i.i.d copy of $Z_n$, $\bZ$ is an $ n \times 1$ vector with i.i.d. Gaussian entries and $Z^1$ is a mean 0 variance 2 Gaussian random variable. Now consider the matrix 
\[
    W'_n =  \frac{\sqrt{n(r - 2)}}{r - 1} \diag(\bV'_n) + \frac{\diag(Z'_n \bone)}{r - 1} - Z_n.
\]
Then $W'_n$ and $\tilde{W}_n$ has same distribution. So, it is enough to prove that 
\begin{align}\label{eq:limit_W_and_W'}
    \lim_{n \to \infty} n^{-(k + 1)} [ \bbE \Tr(\hat{W}_n^{2k}) - \bbE \Tr((W'_n)^{2k})] = 0.
\end{align}
Now
\[
    \bbE [\Tr(\hat{W}_n^{2k}) - \Tr((W'_n)^{2k})] = \sum_{\pi} [\bbE \hat{W}_{\pi} - \bbE W'_{\pi}]',
\]
where the sum is over all circuits $\pi : \{0, \ldots, 2k \} \to [n]$ with $\pi(0) = \pi(2k)$ and
\[
    \hat{W}_{\pi} = \prod_{i = 1}^{2k} \hat{W}_{\pi(i - 1), \pi(i)}.
\]
A word $w$ of length $k$ is a sequence of numbers of length $k$, e.g., $12234$ is a word of length $5$. Set each word $w$ of length $2k$ to be a circuit assigning $w[0] = w[2k]$. We associate a word with each circuit of length $2k$ via the relation $ w[i] = \pi(i), 1 \le i \le 2k$. Let $\Pi(w)$ denote the collection of circuits $\pi$ such that the distinct letters of $w$ are in a one-to-one correspondence with the distinct values of $\pi$. Let $v(w)$ be the number of distinct letters in the word $w$. We note that $\# \Pi(w) \le n^{v(w)}$. Define $ f_n(w) := \bbE \hat{W}_{w} - \bbE W'_{w}$. We show that, for any word $w$, there exits a constant $C_{w} > 0$ such that for all $n \ge 1$, 
\begin{align}\label{eq: est_f_n}
    |f_n(w)| = |\bbE \hat{W}_{w} - \bbE{W'_{w}}| \le C_{w} \bigg(\frac{n}{r - 1} + 3\bigg)^{k - v(w) + 1/2}.  
\end{align}
Let $q = q(w)$ be the number of indices $1 \le i \le 2k$ for which $w[i] = w[i - 1]$. If $q(w) = 0$ then $\hat{W}_{w}$ is product of only off-diagonal entries of $\hat{W}_n$ and off-diagonal entries of $\hat{W}_n$ and $W'_n$ are same. Thus $f_n(w) = 0$ and $f_n(w) \neq 0$ only if $q(w) \ge 1$. Let $\cG_w$ be the graph with vertex set as the distinct letters of $w$ and edge set $\{ (w[i - 1], w[i] ) : 1 \le i \le 2k \}$.  Let $u = u(w)$ be the number of edges of $\cG_w$ with distinct endpoints in $w$, which appear exactly once along the circuit $w$, e.g $u(12234) = 4$. Then by independence of the entries of $W'_n$ we have that $\bbE W'_{w} = 0$ as soon as $u(w) \ge 1$. For $u(w) > q(w)$, let there are $b$ number of off-diagonal entries in $\hat{W}_{w}$ and $( 2k - b)$ diagonal entries. Then $ b > 2k - b$. Since $ u(w) > 1$, among these $b$ off-diagonal entries there will be at least one entry, say $\hat{W}_{ij}$ such that the edge $\{i,j\}$ is traversed exactly once in $\cG_w$. By Wick's formula $\bbE \hat{W}_{w} = 0$. Thus, to prove \eqref{eq: est_f_n} it is enough to consider $q(w) \ge u(w)$. 

It is easy to check that excluding the $q$ loop-edges (each vertex connecting to itself), there are at most $ k + \lfloor (u - q)/2 \rfloor$ distinct edges in $w$. These distinct edges form a connected path through $v(w)$ vertices, which for $u \ge 1$ must also be a circuit. From the proof of Lemma~4.12 of \cite{bryc2005spectral}, we have
\begin{equation}\label{eq:est_v(w)}
    v(w) \le k + \ind( u(w) = 0) + \lfloor (u(w) - q(w) )/2 \rfloor \le k. 
\end{equation}
Proceeding to bound $|f_n(w)|$, suppose first that $u \ge 1$, in which case $f_n(w) = \bbE \hat{W}_w$. To compute this expectation we employ Wick's formula. Note that if we match $u$ off-diagonal entries with the $u$ diagonal entries with which they are correlated, and remaining $( q - u ) $ diagonal entries are either self matched, or they can match among themselves, then only contribution to the expectation will be non-zero. Other matching configurations will result in 0 due to the independence of the entries, e.g for the word $w = 11223333$, we have $\bbE \hat{W}_w = \bbE [\hat{W}_{11} \hat{W}_{12}] \bbE [\hat{W}_{22} \hat{W}_{23}]  \bbE [\hat{W}_{33} \hat{W}_{33}]  \bbE [\hat{W}_{33} \hat{W}_{31}]$. Further, observe that covariance terms, i.e  $\bbE[ \hat{W}_{ii} \hat{W}_{ij} ] = \frac{1}{r - 1}$, $\bbE[ \hat{W}_{ij} \hat{W}_{jj}] = \frac{1}{( r - 1)^2}$ and $\bbE[\hat{W}^2_{ii}] = \frac{n}{r - 1} + \frac{1}{(r - 1)^2} + 2 - \frac{4}{r - 1} < \frac{n}{r - 1} + 3$, for any $ 1 \le i \le n$. Then 
\begin{align*}
    |f_n(w)| &\le C^3_w\bigg(\frac{1}{r - 1}\bigg)^u \bigg[ C^1_w\bigg( \frac{1}{(r - 1)^2} \bigg)^{\frac{( q - u )}{2}} + C^2_w\bigg(\frac{n + r - 3}{r - 1} + \bigg( \frac{r - 2}{r - 1}\bigg)^2\bigg)^{\frac{( q - u )}{2}} \bigg] \\
    &\le C_w \bigg[ 1 + C'_w \bigg( \frac{n}{r - 1} + 2 \bigg)^{\frac{( q - u )}{2}} \bigg].
\end{align*}

By our bound \eqref{eq:est_v(w)} on $v(w)$, this implies that \eqref{eq: est_f_n} holds.

Consider next words $w$ for which $u(w) = 0$.  There will be some diagonal entries and some off-diagonal entries which can appear twice or more in $f_n ( w)$. Let $ a_1, \ldots, a_q$ be the $q$ vertices for which $\{ a_i , a_i \}$ is an edge of $\cG_w$. Then 
\[
 \hat{W}_w = \prod_{i = 1}^q \hat{W}_{a_i, a_i} \hat{V}_w, 
\]
where $V_w$ is the product of $( 2k - q )$ off-diagonal entries of $\hat{W}_n$ that correspond to the edges of $w$ that are in $\cG_w$. Similarly, we can write
\[
    W'_w = \prod_{i = 1}^q W'_{a_i , a_i} V'_w.
\]
Since off-diagonal entries of $\hat{W}_n$ are $W'_n$ are same, we can replace $\hat{V}_w$ by $V'_w$ in $\hat{W}_w$.
Let $\hat{W}_{ii} = \hat{D}_i + \hat{S}_i$ and $W'_{ii} = D'_i + S'_i$, for $i = 1,\ldots, 2k$, where $\hat{D}_i = \frac{\sqrt{n(r - 2)}}{r - 1} \bV_i + \frac{1}{r - 1} \sum_{j = 1}^{2k} Z_{ij} - Z_{ii}$, $D'_i = \frac{\sqrt{n(r - 2)}}{r - 1} \bV'_i + \frac{1}{r - 1} \sum_{j = 1}^{2k} Z'_{ij} - Z_{ii}$ and $\hat{S}_i = \sum_{j = 2k + 1}^n Z_{ij}$ and $S'_i = \sum_{j = 2k + 1}^n Z'_{ij}$. Note that we may and shall replace each $\hat{S}_i$ by $S'_i$ without altering $\bbE \hat{W}_{w}$, as $Z_{ij}$ and $Z'_{ij}$ have same distribution. 
\begin{align*}
   f_n(w) &= \bbE \bigg[ V'_w \bigg[ \prod_{i = 1}^q ( \hat{D}_{a_i} + S'_{a_i}) - \prod_{i = 1}^q  (D'_{a_i} + S'_{a_i}) \bigg]\bigg]\\
   &= \sum_{i = 1}^q \bbE \bigg[ V'_w ( D_{a_i} - D'_{a_i} ) \prod_{j = 1}^{i - 1} \hat{W}_{a_j, a_j} \prod_{j = i + 1}^q W'_{a_j, a_j} \bigg] \\
   &\le \sum_{i = 1}^q (\bbE[V'^4_{w}])^{1/4} \bbE[( D_{a_i} - D'_{a_i} )^4 ]^{1/4} \sqrt{\bbE\bigg[ \prod_{j = 1}^{i - 1} \hat{W}^2_{a_j, a_j} \prod_{j = i + 1}^q W'^2_{a_j, a_j} \bigg]}.
\end{align*}
Note that $V'_w$ is product of some i.i.d standard Gaussians, so $\bbE[V'^4_{w}]$ is constant. For each $i$, $D_{a_i} - D'_{a_i}$ is a Gaussian random variable with mean 0 variance $\frac{2n(r - 2)}{( r - 1)^2} + \frac{4k}{(r - 1)^2} = O(n)$ , so $\bbE[( D_{a_i} - D'_{a_i} )^4 ]^{1/4} = O( \sqrt{n}) $. We know $\bbE[ W'^2_{ii}] = \frac{n}{r - 1} + \frac{1}{(r - 1)^2} + 2 < \frac{n}{r - 1} + 3$ and $\bbE[\hat{W}^2_{ii}] = \frac{n}{r - 1} + \frac{1}{(r - 1)^2} + 2 - \frac{4}{r - 1} < \frac{n}{r - 1} + 3$ and $\bbE[ \hat{W}_{ii} W_{jj}] = - \frac{2}{r - 1} + 2$. By Wick's formula, we have 
\[
    \bbE\bigg[ \prod_{j = 1}^{i - 1} \hat{W}^2_{a_j, a_j} \prod_{j = i + 1}^q W'^2_{a_j, a_j} \bigg] = O\bigg(\bigg( \frac{n}{r - 1} + 3 \bigg)^q\bigg).
\]
Hence
\[
    f_n(w) \le C_w \sqrt{n} \bigg( \frac{n}{r - 1} + 3 \bigg)^{q/2}.
\]
It is clear that \eqref{eq: est_f_n} implies that \eqref{eq:limit_W_and_W'} holds and hence the proof of the lemma is complete.
\end{proof}

\begin{proof}[Proof of Proposition~\ref{prop:bulk_universality_laplacian_orig}]
Let $g(\bx) = \hat{G}_n(\bx)$. As before, we have to control $\lambda_2(g)$ and $\lambda_3(g)$. We have the identities:
\begin{align}
    \frac{\partial g(\bx) }{\partial x_{\ell}} &= - \frac{1}{n^{3 / 2} r^{1/2}} \Tr\bigg(\frac{\partial L_{H_n}(\bx)}{\partial x_{\ell}} \hat{R}_n^2(\bx)\bigg), \\
    \frac{\partial^2 g(\bx)}{\partial x^2_{\ell}} &= \frac{2}{n^2 r} \Tr\bigg(\frac{\partial L_{H_n}(\bx)}{\partial x_{\ell}} \hat{R}_n(\bx) \frac{\partial L_{H_n}(\bx)}{\partial x_{\ell}} \hat{R}_n^2(\bx)\bigg), \\
    \frac{\partial^3 g(\bx) }{\partial x^3_{\ell}} &= - \frac{6}{n^{5/2} r^{3/2}} \Tr\bigg(\frac{\partial L_{H_n}(\bx)}{\partial x_{\ell}} \hat{R}_n(\bx) \frac{\partial L_{H_n}(\bx)}{\partial x_{\ell}} \hat{R}_n(\bx) \frac{\partial L_{H_n}(\bx)}{\partial x_{\ell}} \hat{R}_n^2(\bx)\bigg).
\end{align}
Note that
\[
    \diag(H_n(\bx) \bone) = \diag\bigg(\frac{1}{\sqrt{N}} \sum_{\ell = 1}^M x_{\ell} Q_{\ell} \bone\bigg) = \frac{1}{\sqrt{N}} \sum_{\ell = 1}^M x_{\ell} \, \diag(Q_{\ell} \bone).
\]
Hence 
\begin{equation}\label{eq:lap_deriv_1}
    \frac{\partial L_{H_n}(\bx)}{\partial x_{\ell}} = \frac{1}{\sqrt{N}} \bigg(\diag(Q_{\ell} \bone) - Q_{\ell}\bigg) = \frac{1}{\sqrt{N}} \begin{pmatrix}
        r I_r - J_r & 0 \\
        0 & 0
    \end{pmatrix},
\end{equation}
where we have assumed (after a relabeling of nodes if necessary) that
\[
    Q_{\ell} = \begin{pmatrix}
        J_r - I_r & 0 \\
        0 & 0
    \end{pmatrix}.
\]
Now
\[
    \bigg|\Tr\bigg(\frac{\partial L_{H_n}(\bx)}{\partial x_{\ell}} \hat{R}_n^2(\bx)\bigg)\bigg| \le \frac{1}{v^2} \sum_{i,j} \bigg| \bigg( \frac{\partial L_{H_n}(\bx)}{\partial x_{\ell}} \bigg)_{ij}\bigg| = \frac{1}{v^2} \frac{2 r (r - 1)}{\sqrt{N}} \le \frac{1}{v^2} \frac{2 r^2}{\sqrt{N}},
\]
and a fortiori,
\[
    \bigg\| \frac{\partial g(\bx) }{\partial x_{\ell}} \bigg\|_{\infty} \le \frac{2}{v^2} \frac{r^{3/2}}{n^{3/2}\sqrt{N}}.
\]
From \eqref{eq:lap_deriv_1} we also deduce that
\[
    \bigg\|\frac{\partial L_{H_n}(\bx)}{\partial x_{\ell}} \bigg\|^2_{F} = \frac{r^2(r - 1)}{N} \le \frac{r^3}{N} \quad \text{and} \quad \bigg\|\frac{\partial L_{H_n}(\bx)}{\partial x_{\ell}} \bigg\|_{\op} = \frac{r}{\sqrt{N}}.
\]
Therefore
\[
    \Tr\bigg(\frac{\partial L_{H_n}(\bx)}{\partial x_{\ell}} \hat{R}_n(\bx) \frac{\partial L_{H_n}(\bx)}{\partial x_{\ell}} \hat{R}_n^2(\bx)\bigg) \le \bigg\|\frac{\partial L_{H_n}(\bx)}{\partial x_{\ell}}  \bigg\|^2_{F} \cdot \|\hat{R}_n(\bx)\|^3_{\op} \le \frac{r^3}{N} \frac{1}{v^3}
\]
and so
\[
    \bigg\| \frac{\partial^2 g(\bx)}{\partial x^2_{\ell}} \bigg\|_{\infty} \le \frac{2r^2}{n^2Nv^3}.
\]
Finally,
\begin{align*}
    \Tr\bigg(\frac{\partial L_{H_n}(\bx)}{\partial x_{\ell}} \hat{R}_n(\bx) \frac{\partial L_{H_n}(\bx)}{\partial x_{\ell}} \hat{R}_n(\bx) \frac{\partial L_{H_n}(\bx)}{\partial x_{\ell}} \hat{R}_n^2(\bx)\bigg) &\le \bigg\|\frac{\partial L_{H_n}(\bx)}{\partial x_{\ell}} \bigg\|_{\op} \cdot \bigg\|\frac{\partial L_{H_n}(\bx)}{\partial x_{\ell}} \bigg\|_{F}^2 \cdot \|\hat{R}_n(\bx)\|_{\op}^4 \\
    & \le \frac{r}{\sqrt{N}} \cdot \frac{r^3}{N} \cdot \frac{1}{v^4}
\end{align*}
and therefore
\[
    \bigg\| \frac{\partial^3 g(\bx) }{\partial x^3_{\ell}} \bigg\|_{\infty} \le \frac{6 r^{5 / 2}}{n^{5/2} N^{3/2} v^4}.
\]
We conclude that
\[
    \lambda_2(g) \le 4 \max( v^{-4}, v^{-3}) \frac{r^3}{n^2 N}
\]
and
\[
    \lambda_3(g) \le 8 \max( v^{-6}, v^{-\frac{9}{2}}, v^{-4}) \frac{r^{9/2}}{n^{5/2} N^{3/2}}.
\]
Hence 
\begin{align*}
    |\bbE(\hat{G}_n&(\mathbf{Y})) - \bbE(\hat{G}_n(\mathbf{Z}))| \\ \nonumber
    &\le 8 \max(v^{-3}, v^{-4}) \frac{r^3}{n^2 N} \sum_{\ell = 1}^M \bigg[ \bbE[Y^2_{\ell}\ind(|Y_{\ell}| > K)] + \bbE[Z^2_{\ell}\ind(|Z_{\ell}| > K)] \bigg] \\
    &\quad + 16 \max(v^{-6}, v^{-\frac{9}{2}}, v^{-4}) \frac{r^{9/2}}{n^{5/2} N^{3/2}} \sum_{\ell = 1}^M \bigg[ \bbE[|Y_{\ell}|^3\ind(|Y_{\ell}| \le K)] + \bbE[|Z_{\ell}|^3\ind(|Z_{\ell}| \le K)] \bigg].
\end{align*}
This completes the proof.
\end{proof}

\begin{proof}[Proof of Proposition~\ref{prop:bulk_universality_laplacian}]
Let $g(\bx) = \tilde{G}_n(\bx)$. As before, we have to control $\lambda_2(g)$ and $\lambda_3(g)$. We have the identities:
\begin{align}
    \frac{\partial g(\bx) }{\partial x_{\ell}} &= - \frac{1}{n^{\frac{3}{2}}} \Tr\bigg(\frac{\partial \tilde{L}_{H_n}(\bx)}{\partial x_{\ell}} \tilde{R}_n^2(\bx)\bigg), \\
    \frac{\partial^2 g(\bx)}{\partial x^2_{\ell}} &= \frac{2}{n^2} \Tr\bigg( \frac{\partial \tilde{L}_{H_n}(\bx)}{\partial x_{\ell}} \tilde{R}_n(\bx) \frac{\partial \tilde{L}_{H_n}(\bx)}{\partial x_{\ell}} \tilde{R}_n^2(\bx)\bigg), \\
    \frac{\partial^3 g(\bx) }{\partial x^3_{\ell}} &= - \frac{6}{n^{\frac{5}{2}}} \Tr\bigg(\frac{\partial \tilde{L}_{H_n}(\bx)}{\partial x_{\ell}} \tilde{R}_n(\bx) \frac{\partial \tilde{L}_{H_n}(\bx)}{\partial x_{\ell}} \tilde{R}_n(\bx) \frac{\partial \tilde{L}_{H_n}(\bx)}{\partial x_{\ell}} \tilde{R}_n^2(\bx)\bigg).
\end{align}
Note that
\[
    \diag(H_n(\bx) \bone) = \diag\bigg(\frac{1}{\sqrt{N}} \sum_{\ell = 1}^M x_{\ell} Q_{\ell} \bone\bigg) = \frac{1}{\sqrt{N}} \sum_{\ell = 1}^M x_{\ell} \, \diag(Q_{\ell} \bone).
\]
Hence 
\begin{equation}\label{eq:lap_deriv_2}
    \frac{\partial \tilde{L}_{H_n}(\bx)}{\partial x_{\ell}} = \frac{1}{\sqrt{N}} \bigg( \frac{\diag(Q_{\ell} \bone)}{r - 1} - Q_{\ell}\bigg) = \frac{1}{\sqrt{N}} \begin{pmatrix}
        2I_r - J_r & 0 \\
        0 & 0
    \end{pmatrix},
\end{equation}
where we have assumed (after a relabeling of nodes if necessary) that
\[
    Q_{\ell} = \begin{pmatrix}
        J_r - I_r & 0 \\
        0 & 0
    \end{pmatrix}.
\]
Now
\[
    \bigg|\Tr\bigg(\frac{\partial \tilde{L}_{H_n}(\bx)}{\partial x_{\ell}} \tilde{R}_n^2(\bx)\bigg)\bigg| \le \frac{1}{v^2} \sum_{i,j} \bigg| \bigg( \frac{\partial \tilde{L}_{H_n}(\bx)}{\partial x_{\ell}} \bigg)_{ij}\bigg| =\frac{1}{v^2} \frac{r^2}{\sqrt{N}},
\]
and a fortiori,
\[
    \bigg\| \frac{\partial g(\bx) }{\partial x_{\ell}} \bigg\|_{\infty} \le \frac{1}{v^2} \frac{r^2}{n^{3/2}\sqrt{N}}.
\]
From \eqref{eq:lap_deriv_2} we also deduce that
\[
    \bigg\|\frac{\partial \tilde{L}_{H_n}(\bx)}{\partial x_{\ell}} \bigg\|^2_{F} = \frac{r^2}{N} \quad \text{and} \quad \bigg\|\frac{\partial \tilde{L}_{H_n}(\bx)}{\partial x_{\ell}} \bigg\|_{\op} = \frac{\max\{2, r - 2\}}{\sqrt{N}} \le \frac{r}{\sqrt{N}}.
\]
Therefore
\[
    \Tr\bigg(\frac{\partial \tilde{L}_{H_n}(\bx)}{\partial x_{\ell}} \tilde{R}_n(\bx) \frac{\partial \tilde{L}_{H_n}(\bx)}{\partial x_{\ell}} \tilde{R}_n^2(\bx)\bigg) \le \bigg\|\frac{\partial \tilde{L}_{H_n}(\bx)}{\partial x_{\ell}}  \bigg\|^2_{F} \cdot \|\tilde{R}_n(\bx)\|^3_{\op} \le \frac{r^2}{N} \frac{1}{v^3}
\]
and so
\[
    \bigg\| \frac{\partial^2 g(\bx)}{\partial x^2_{\ell}} \bigg\|_{\infty} \le \frac{2r^2}{n^2Nv^3}.
\]
Finally,
\begin{align*}
    \Tr\bigg(\frac{\partial \tilde{L}_{H_n}(\bx)}{\partial x_{\ell}} \tilde{R}_n(\bx) \frac{\partial \tilde{L}_{H_n}(\bx)}{\partial x_{\ell}} \tilde{R}_n(\bx) \frac{\partial \tilde{L}_{H_n}(\bx)}{\partial x_{\ell}} \tilde{R}_n^2(\bx)\bigg) &\le \bigg\|\frac{\partial \tilde{L}_{H_n}(\bx)}{\partial x_{\ell}} \bigg\|_{\op} \cdot \bigg\|\frac{\partial \tilde{L}_{H_n}(\bx)}{\partial x_{\ell}} \bigg\|_{F}^2 \cdot \|\tilde{R}_n(\bx)\|_{\op}^4 \\
    & \le \frac{r}{\sqrt{N}} \cdot \frac{r^2}{N} \cdot \frac{1}{v^4}
\end{align*}
and therefore
\[
    \bigg\| \frac{\partial^3 g(\bx) }{\partial x^3_{\ell}} \bigg\|_{\infty} \le \frac{6 r^3}{n^{5/2} N^{3/2} v^4}.
\]
We conclude that
\[
    \lambda_2(g) \le 2\max( v^{-4}, v^{-3}) \frac{r^4}{n^2 N}
\]
and
\[
    \lambda_3(g) \le 6 \max( v^{-6}, v^{-\frac{9}{2}}, v^{-4}) \frac{r^6}{n^{5/2} N^{3/2}}.
\]
Hence 
\begin{align*}
    |\bbE(\tilde{G}_n&(\mathbf{Y})) - \bbE(\tilde{G}_n(\mathbf{Z}))| \\ \nonumber
    &\le 4 \max(v^{-3}, v^{-4}) \frac{r^4}{n^2 N} \sum_{\ell = 1}^M \bigg[ \bbE[Y^2_{\ell}\ind(|Y_{\ell}| > K)] + \bbE[Z^2_{\ell}\ind(|Z_{\ell}| > K)] \bigg] \\
    &\quad + 12 \max(v^{-6}, v^{-\frac{9}{2}}, v^{-4}) \frac{r^6}{n^{5/2} N^{3/2}} \sum_{\ell = 1}^M \bigg[ \bbE[|Y_{\ell}|^3\ind(|Y_{\ell}| \le K)] + \bbE[|Z_{\ell}|^3\ind(|Z_{\ell}| \le K)] \bigg].
\end{align*}
This completes the proof.
\end{proof}
\end{document}